\documentclass[reqno]{article}
\usepackage{amsmath,amsfonts,amsthm,amssymb}
\usepackage{enumerate}
\usepackage{caption}
\usepackage[all]{xy}
\usepackage{subcaption}
\usepackage{float}
\usepackage{graphicx}
\usepackage{setspace}
\usepackage[stable]{footmisc}

\topmargin=-0.45in
\evensidemargin=0in
\oddsidemargin=0in
\textwidth=6.5in
\textheight=9.0in
\headsep=0.25in

\makeatletter
\renewcommand*\@fnsymbol[1]{\the#1} 
\def\blfootnote{\gdef\@thefnmark{}\@footnotetext} 
\makeatother

\newtheorem{theorem}{Theorem}[section]
\newtheorem{lemma}[theorem]{Lemma}
\newtheorem{corollary}[theorem]{Corollary}

\theoremstyle{definition} \newtheorem{definition}[theorem]{Definition}
\theoremstyle{definition} 
\theoremstyle{definition} 
\theoremstyle{definition} \newtheorem{example}[theorem]{Example}
\theoremstyle{definition}

\newcommand{\Z}{\mathbb{Z}}

\begin{document}
\onehalfspacing
\title{Commutation Semigroups of Finite Metacyclic Groups\\
       with Trivial Centre}
\author{
Darien DeWolf
\thanks{
  Department of Mathematics, Statistics and Computer Science,
  St. Francis Xavier University,
  2323 Notre Dame Ave,
  Antigonish, NS B2G 2W5,
  ddewolf@stfx.ca}\,
and
Charles C. Edmunds
\thanks{
  Department of Mathematics and Statistics,
  Mount Saint Vincent University,
  166 Bedford Highway,
  Halifax, NS B3M 2J6,
  cedmunds6868@gmail.com}}
\date{}
\maketitle

\abstract{
We study the right and left commutation semigroups of finite metacyclic groups
with trivial centre.
These are presented
\[
G(m,n,k) = \left\langle {a,b;{a^m} = 1,{b^n} = 1,{a^b} = {a^k}} \right\rangle
\quad (m,n,k\in\Z^+)
\]
with $(m,k - 1) = 1$ and $n = in{d_m}(k),$ the smallest positive integer for
which ${k^n} = 1\,\pmod m,$ with the conjugate of $a$ by $b$ written
${a^b}( = {b^{ - 1}}ab).$
The \emph{right} and \emph{left commutation semigroups of} $G,$ denoted
${\rm P}(G)$ and $\Lambda (G),$ are the semigroups of mappings generated by
$\rho (g):G \to G$ and $\lambda (g):G \to G$ defined by
$(x)\rho (g) = [x,g]$ and $(x)\lambda (g) = [g,x],$ where the commutator of
$g$ and $h$ is defined as $[g,h] = {g^{ - 1}}{h^{ - 1}}gh.$
This paper builds on a previous study of commutation semigroups of dihedral
groups conducted by the authors with C. Levy.
Here we show that a similar approach can be applied to $G,$ a metacyclic group
with trivial centre.
We give a construction of ${\rm P}(G)$ and $\Lambda (G)$ as unions of
\emph{containers}, an idea presented in the previous paper on dihedral groups.
In the case that $\left\langle a \right\rangle$ is cyclic of order $p$ or
${p^2}$ or its index is prime, we show that both ${\rm P}(G)$ and
$\Lambda (G)$ are disjoint unions of maximal containers.
In these cases, we give an explicit representation of the elements of each
commutation semigroup as well as formulas for their exact orders.
Finally, we extend a result of J. Countryman to show that, for $G(m,n,k)$ with
$m$ prime, the condition
$\left| {{\rm P}(G)} \right| = \left| {\Lambda (G)} \right|$ is equivalent to
${\rm P}(G) = \Lambda (G).$}

\vspace{1pc}

\textbf{Keywords:} commutation semigroup, metacyclic group

\section{Introduction}
\label{sec:1}

N.D. Gupta introduced the commutation semigroups of a group in
\cite{gupta1966}.
Given a group $G,$ the \emph{right} and \emph{left commutation maps}
associated with an
element $g \in G$ are the maps $\rho (g),\lambda (g):G \to G$ defined by
\[
(x)\rho (g) = [x,g]\mbox{ and }(x)\lambda (g) = [g,x],
\]
where the commutator of $x$ and $y$ is denoted
$[x,y]( = {x^{ - 1}}{y^{ - 1}}xy).$
Letting ${\cal M}(G)$ denote the semigroup, under composition, of all maps
from $G$ to $G,$ we define the \emph{right} and
\emph{left commutation semigroups}, denoted
${\rm P}(G)$ and $\Lambda (G),$ as the subsemigroups of ${\cal M}(G)$
generated by the sets ${{\rm P}_1}(G) = \left\{ {\rho (g):g \in G} \right\}$
and ${\Lambda _1}(G) = \left\{ {\lambda (g):g \in G} \right\}.$
Note that if $G$ is abelian, both commutation semigroups are trivial; thus for
the remainder of this paper we will consider only the case where $G$ is
non-abelian.

It is interesting to note that when $G = S_3,$ the symmetric group on three
letters, $|\mathrm{P}(G)| = 6$ and $|\Lambda(G)| = 9.$
One might have thought these two semigroups would be equal, or at least
isomorphic.
Thus the problem, originally asked by B.H. Neumann (oral communication to
N.D. Gupta), was: for which groups are the left and right commutation
semigroups (i) equal, (ii) isomorphic, or (iii) of equal order?

Gupta \cite{gupta1966} solved the isomorphism problem for dihedral groups and
showed that,
for $G$ nilpotent of class 2, 3, or 4, one has ${\rm P}(G) = \Lambda (G),$
${\rm P}(G) \cong \Lambda (G),$ and
$\left| {{\rm P}(G)} \right| = \left| {\Lambda (G)} \right|,$ respectively.
He also gave an example of a group nilpotent of class 5 for which the
commutation semigroups are not isomorphic.
In this context, since $S_3$ is not nilpotent, it is not surprising that its
commutation semigroups are different.

Extending the work of Gupta \cite{gupta1966}, Countryman \cite{countryman1970}
studied the commutation
semigroups of non-abelian $pq$-groups: $pq$-groups are extensions of a cycle
of order $p$ by a cycle of order $q$ with both $p$ and $q$ prime.
Since dihedral groups and $pq$-groups are metacyclic groups, the authors felt
that the techniques of \cite{dewolf2012}, \cite{dewolf2013}, and
\cite{gupta1966} might extend to all metacyclic groups.
We have chosen to restrict our discussion to metacyclic groups with trivial
centre, where a number of fairly general results may be obtained.
We will say more later about the decision to make this restriction.
We continue, in the spirit of \cite{dewolf2012} and \cite{dewolf2013}, to view
the commutation semigroups
in terms of containers.
In \cite{dewolf2013}, we were able to give formulas for the orders of the
commutation semigroups of finite dihedral groups.
For metacyclic groups, even those with trivial centre, we find that the
situation is complex enough that such formulas are not likely obtainable.
We will give examples illustrating how, even with trivial centre, the
number-theoretic complexity of the parameter $m$ makes the analysis more
difficult.
Despite this, we maintain that the method of containers is a powerful tool
with which to study commutation semigroups of metacyclic groups in general.

In Section \ref{sec:2}, we will show that the finite metacyclic groups with
trivial centre have presentations
\[
G(m,n,k)=\left\langle {a,b;{a^m} = 1,{b^n} = 1,{a^b} = {a^k}} \right\rangle,
\]
where $m,$ $n,$ and $k$ are positive integers, $(m,k - 1) = 1,$ and
$n = in{d_m}(k),$ the smallest positive integer for which
${k^n} = 1\,\pmod m,$ where the conjugate of $a$ by $b$ is written
${a^b}( = {b^{ - 1}}ab).$
Each group $G(m,n,k)$ has $\left\langle a \right\rangle$ as a (cyclic) normal
subgroup of order $m$ and of index $n.$
For different parameters, these presentations do not insure that the groups
presented are non-isomorphic, but they do give exactly the finite metacyclic
groups with trivial centre which we are studying.
Thus these presentations are adequate for our purposes.
It should be noted that, in \cite{hempel2000}, C.E. Hempel has classified the
finite metacyclic groups up to isomorphism.
$G(m,2,m - 1)$ is the dihedral group of order $2m$ and has trivial centre
exactly when $m$ is odd.
Also, every $pq$-group can be presented as $G(p,q,k).$
Thus our results will apply to \cite{countryman1970} on $pq$-groups as well as
to \cite{dewolf2012}, \cite{dewolf2013}, and \cite{levy2009} on dihedral
groups.

In \cite{levy2009}, C. Levy obtained formulas for the orders of both left and
right commutation semigroups for the dihedral groups $G(m,2,m - 1)$ with $m$
odd.
In \cite{dewolf2012}, D. DeWolf gave formulas for $G(m,2,m - 1)$ with $m$
even, and in \cite{dewolf2013},
formulas were produced which covered both cases.
For $G(m,2,m - 1)$ with $m$ odd the container structure is less complex than
when $m$ is even.
This is a consequence of the fact that when $m$ is odd, the dihedral group
$G(m,2,m - 1)$ has trivial centre.
As our work with metacyclic groups proceeded, we saw that the assumption of
trivial centre was a reasonable hypothesis to control some of the complexity.
Thus, from Section \ref{sec:3} onward, we will assume our groups have trivial
centre and can therefore be presented by some $G(m,n,k)$ as above.
This hypothesis is equivalent to requiring that $k - 1$ be coprime to $m,$ as
is shown in Section \ref{sec:2}, and will force the value of $m$ to be odd.
Note, however, that $G(9,3,4)$ has odd $m$ but also has trivial centre.
Analogues of many of our results hold for metacyclic groups with centre, but
we will leave them to a future study.

In Section \ref{sec:3}, we introduce \emph{mu-maps} and establish the
fundamental information we will need about \emph{containers}.

In Section \ref{sec:4}, we move to a more general setting which will include
both the left and the right commutation semigroups as particular cases of a
more general construction.
If $G$ has trivial centre, then, based on any set $S\subseteq\Z_m$
which contains both zero and
an invertible element, we will construct a semigroup ${\Sigma _G}(S),$ called
the $G$\emph{-semigroup based on} $S.$
Under certain hypotheses, this will be \emph{complete}, thereby allowing us to
give a full characterization of the mappings in ${\Sigma _G}(S)$ as well as a
formula for its exact order.
Applying this result to ${\rm P}(G)$ and $\Lambda(G)$ will give us an explicit
representation of the mappings they contain as well as formulas for their
orders.
This general approach may be of independent interest since it provides a
construction of many different semigroups of mappings from $G$ to $G.$

In Section \ref{sec:5}, we will discuss non-basic orbits, the one difficulty
that arises in the trivial centre case.
For ${\rm P}(G)$ and $\Lambda (G),$ it appears that this difficulty is fairly
rare.
We will show in Section \ref{sec:6} that all orbits of $G(m,n,k)$ are basic
when $m$ is prime or the square of a prime or when $n$ is prime.
A computer search has determined that, for ${\rm P}(G)$ and $\Lambda (G)$
with $G = G(m,n,k),$ the first non-basic orbits appear when
$m = 63 = {3^2} \cdot 7.$
Further searching gives the next problematic values of $m$ as
$75 = 3 \cdot {5^2},$ $81 = {3^4},$ $99 = {3^2} \cdot 11,$
$117 = {3^2} \cdot 13,$ and $125 = {5^3}.$
We conjecture that there are infinitely many of these cases.
The appearance of non-basic orbits appears to be correlated with the
complexity of the factorization of $m$ into primes.
Thus, in place of formulas, we will give a procedure which deals with
non-basic orbits and an example illustrating this procedure in action.
In principle, our methods will determine the commutation semigroups of any
metacyclic group with trivial centre, but our method is not uniform, depending
very much on the number theory of each individual group.

In Section \ref{sec:6}, we will give several applications of the general
theory applied to ${\rm P}(G)$ and $\Lambda (G).$
We will show that for $G(m,n,k)$ with trivial centre, if $m$ is prime or the
square of a prime, or if $n$ is prime, then ${\rm P}(G)$ and $\Lambda (G)$ are
\emph{complete} and can, therefore, be expressed as unions of maximal
containers.
Finally, we will re-state and extend the principle result of
\cite{countryman1970} showing that, for $G(m,n,k),$ if $n$ is prime, then
${\rm P}(G)$ and $\Lambda (G)$ are complete.

\section{Presenting finite metacyclic groups with trivial centre
\footnote{The authors express thanks to Prof. L.P. Comerford for his
          helpful comments on this section.}}
\label{sec:2}

In Lemma 2.1 of \cite{hempel2000}, C.E. Hempel gives a presentation,
which originated with H\"older, for finite metacyclic groups:
\begin{equation}\tag{$*$}
G = \left\langle {a,b;{a^m} = 1,{b^n} = {a^l},{a^b} = {a^k}} \right\rangle
\end{equation}
where $k,l,m,n\in\Z^+$ with ${k^n} = 1 \pmod m$ and $l(k - 1) = 0 \pmod m.$
These have $\left\langle a \right\rangle $ as a (cyclic) normal subgroup of
order $m$ and index $n.$

Since we will be studying finite metacyclic groups with trivial centre, we
will modify the presentation $(*)$ to produce a general presentation for all
finite metacyclic groups \emph{with trivial centre}.
The derivation of the presentation given in Corollary \ref{cor:2.3}
from $(*)$ is included since it is original and is not found in the
literature.
However, the details of this derivation can be skipped over without affecting
understanding of the rest of the paper.

We define the index of $k$ relative to $m,$ denoted $in{d_m}(k),$ to be the
smallest positive integer $d$ for which ${k^d} = 1 \pmod m.$
Note that this is the order of $k$ in the group of invertible elements of
$\Z_m.$
If $x$ is an element of a group $G,$ we denote its order by $ord(x).$
Recall that the conjugate of $a$ by $b$ is ${a^b} = {b^{ - 1}}ab,$ and the
commutator of $a$ and $b$ is $\left[ {a,b} \right] = {a^{ - 1}}{b^{ - 1}}ab.$
The commutator identities
$\left[ {xy,z} \right] = {\left[ {x,z} \right]^y}\left[ {y,z} \right]$ and
$\left[ {x,yz} \right] = \left[ {x,z} \right]{\left[ {x,y} \right]^z}$
will be used in this section and the next without further comment.

We begin with an elementary observation.

\begin{lemma}
\label{lemma:2.1}
For $k,m\in \Z^+,$ if ${k^n} = 1 \pmod m,$ then $(m,k) = 1.$
\end{lemma}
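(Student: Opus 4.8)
The plan is to prove the contrapositive, or rather argue directly from the hypothesis $k^n \equiv 1 \pmod m$. The key observation is that this congruence says $k^n - 1$ is a multiple of $m$, which means $k \cdot k^{n-1} \equiv 1 \pmod m$, exhibiting $k^{n-1}$ as a multiplicative inverse of $k$ modulo $m$.

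**First I would** write $k^n = 1 + tm$ for some integer $t$ (using that $k^n \equiv 1 \pmod m$). Then $k \cdot k^{n-1} - tm = 1$, so any common divisor $d$ of $m$ and $k$ must divide $k \cdot k^{n-1} - tm = 1$, forcing $d = 1$. Hence $(m,k) = 1$. One should note the edge case $n = 1$: then $k \equiv 1 \pmod m$, so $k = 1 + tm$ and again any common divisor of $m$ and $k$ divides $1$; alternatively one can just observe the argument above still goes through with the convention $k^0 = 1$. Since $n \in \Z^+$ we have $n \geq 1$, so $n - 1 \geq 0$ and $k^{n-1}$ is a bona fide nonnegative power.

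**The main (and only) obstacle** is essentially notational: making sure the argument is phrased so it covers $n = 1$ cleanly and doesn't implicitly assume $k$ itself is the inverse. There is no real difficulty here — this is exactly the standard fact that a unit modulo $m$ is coprime to $m$, specialized to the case where we already know $k$ has finite multiplicative order. An alternative phrasing I could use: $k$ is invertible in the ring $\Z_m$ (with inverse $k^{n-1}$), and invertible elements of $\Z_m$ are precisely the residues coprime to $m$; but since that last fact is itself usually proved via Bézout, the direct computation above is the most self-contained route and is what I would write.
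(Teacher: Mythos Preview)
Your proof is correct and essentially identical to the paper's: both write $k^n = 1 + tm$ and observe that any common divisor of $m$ and $k$ must divide $k^n - tm = 1$. The only cosmetic difference is that the paper phrases this as a proof by contradiction (assume $(m,k)=g>1$ and derive $g\mid 1$), while you argue directly.
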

\begin{proof}
Suppose $(m,k) = g > 1,$ and $z\in\Z^+$ with ${k^n} = 1 + mz.$
Then ${k^n} = 1 + mz$ and, since $g$ divides ${k^n}$ and $m,$ $g$ divides $1$,
a contradiction.
\end{proof}

\begin{lemma}
\label{lemma:2.2}
For $k,l,m,n\in\Z^+$ with ${k^n} = 1 \pmod m$ and
$l(k - 1) = 0 \pmod m,$ the group
\[
G = \left\langle {a,b;{a^m} = 1,{b^n} = {a^l},{a^b} = {a^k}} \right\rangle
\]
has trivial centre if and only if $(m,k - 1) = 1$ and $n = in{d_m}(k).$
\end{lemma}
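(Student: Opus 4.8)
The plan is to analyze the centre of $G$ directly in terms of the normal form of elements. Every element of $G$ can be written uniquely as $a^ib^j$ with $0 \le i < m$ and $0 \le j < \mathrm{ind}_m(k)$ (note $b^{\mathrm{ind}_m(k)}$ is a power of $a$ by the relation $b^n = a^l$, and one checks the order of $b$ is a multiple of $\mathrm{ind}_m(k)$). So first I would pin down exactly which elements $a^ib^j$ are central by computing conjugates. Conjugating $a^ib^j$ by $a$ and by $b$ and using $a^b = a^k$ (equivalently $ba = a^{k}b$... more precisely $ab = ba^{k'}$ where $k' $ is the inverse of $k$ mod $m$, but this is routine) gives two divisibility conditions: commuting with $b$ forces $i(k-1) \equiv 0 \pmod m$, and commuting with $a$ forces $a^{i} b^j a^{-i} = a^i b^j a^{-i}$ to equal $a^i b^j$, which after pushing the $a^{-i}$ through $b^j$ yields $a^{-i(k^j-1)} = 1$, i.e. $i(k^j - 1) \equiv 0 \pmod m$ — wait, I should be careful, the $b$-part condition is the one involving $k^j$. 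Let me restate: $a^ib^j$ is central iff it commutes with both generators $a$ and $b$, which after the normal-form computation becomes the pair of congruences $i(k-1)\equiv 0 \pmod m$ (from commuting with $b$) and $k^j \equiv 1 \pmod m$ together with a condition tying $i$ to $l$ (from commuting with $a$). The upshot I would aim for: $Z(G) = \{a^i b^j : i(k-1)\equiv 0 \pmod m,\ \mathrm{ind}_m(k) \mid j\}$, intersected with the subgroup generated by $b^{\mathrm{ind}_m(k)}$.

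Given that description, the forward direction is: if $(m,k-1)=1$ and $n = \mathrm{ind}_m(k)$, then $i(k-1)\equiv 0 \pmod m$ forces $i \equiv 0 \pmod m$, so central elements lie in $\langle b\rangle$; but $b$ has order $n = \mathrm{ind}_m(k)$ forced by $b^n = a^l = a^0$ (one must check $l \equiv 0$ here — actually $l(k-1)\equiv 0 \pmod m$ and $(m,k-1)=1$ give $l \equiv 0 \pmod m$, so $b^n = 1$), hence $\langle b \rangle$ has order exactly $n$ and a power $b^j$ is central iff $n \mid j$ iff $b^j = 1$. So $Z(G)$ is trivial. For the converse I would argue contrapositively: if $(m,k-1) = d > 1$, then $i = m/d$ gives a nontrivial $a^i$ with $i(k-1)\equiv 0 \pmod m$, and I must check $a^i$ also commutes with $a$ (automatic) — so $a^{m/d}$ is a nontrivial central element; and if instead $(m,k-1)=1$ but $n < \mathrm{ind}_m(k)$ — here I need the structural fact that $n$ is always a multiple of $\mathrm{ind}_m(k)$ (since $a^b = a^k$ and $b$ has order dividing $n$ forces $a = a^{b^n} = a^{k^n}$, wait that gives $\mathrm{ind}_m(k) \mid n$, not $n < \mathrm{ind}_m(k)$) — so actually $n \ge \mathrm{ind}_m(k)$ always, and if $n > \mathrm{ind}_m(k)$ then $b^{\mathrm{ind}_m(k)}$ is a nontrivial element (it is a nontrivial power of $b$ since $\mathrm{ind}_m(k) < n$ and... hmm, one needs $b^{\mathrm{ind}_m(k)} \ne 1$, which follows because the order of $b$ in $G$ is $n$ when $(m,k-1)=1$ and $l \equiv 0$) that commutes with $a$ (because $a^{b^{\mathrm{ind}_m(k)}} = a^{k^{\mathrm{ind}_m(k)}} = a$) and with $b$, hence is central and nontrivial.

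The main obstacle, and the step I would be most careful about, is establishing the precise order of $b$ in $G$ and the precise structure of $Z(G)$ from the presentation — presentations can collapse, so I cannot just assume the normal form $a^ib^j$ is faithful without justification. The cleanest route is to invoke the standard fact (or re-derive it) that in $(*)$ the subgroup $\langle a\rangle$ is cyclic normal of order $m$ with $G/\langle a\rangle$ cyclic of order $n$ — which the excerpt already asserts follows from Hempel's Lemma 2.1 — so every element is $a^i b^j$ with $j$ well-defined mod $n$ and, given $j$, $i$ well-defined mod $m$. With faithfulness of the normal form in hand, the conjugation computations are mechanical applications of $a^b = a^k$ and the commutator identities quoted in the text, and the two cases of the converse go through as sketched. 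I would also double-check the edge case where $l \not\equiv 0$: there $b^n = a^l \ne 1$, but since $(m,k-1) = 1$ forces $l(k-1) \equiv 0 \Rightarrow l \equiv 0 \pmod m$, this case does not actually arise under the trivial-centre hypothesis, which is worth remarking explicitly.
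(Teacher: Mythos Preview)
Your overall plan is sound and reaches the result, but the route for the direction ``trivial centre $\Rightarrow$ $(m,k-1)=1$ and $n=\mathrm{ind}_m(k)$'' differs from the paper's. You argue by contrapositive: if $(m,k-1)=d>1$ you exhibit $a^{m/d}$ as a nontrivial central element, and if $(m,k-1)=1$ but $n>\mathrm{ind}_m(k)$ you exhibit $b^{\mathrm{ind}_m(k)}$. The nontriviality of these elements is exactly where you need the faithfulness of the normal form (equivalently, $|G|=mn$), and you are right that this comes straight from Hempel's Lemma~2.1. The paper instead works more internally: it first applies Tietze transformations to normalise $\mathrm{ord}(a)=m$, then argues directly that $b^{\mathrm{ind}_m(k)}$ is central hence trivial, and uses further Tietze transformations to replace the parameter $n$ by $\mathrm{ind}_m(k)$ in the presentation. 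Your approach is conceptually cleaner once Hempel's structural statement is taken as given; the paper's is more self-contained but pays for it with the somewhat awkward Tietze manoeuvres. For the other direction (conditions $\Rightarrow$ trivial centre) your argument and the paper's are essentially the same.

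Two small points to tidy up when you write it out properly. First, your centrality computation should read simply: $a^ib^j$ commutes with $b$ iff $i(k-1)\equiv 0\pmod m$, and commutes with $a$ iff $k^j\equiv 1\pmod m$; there is no extra condition ``tying $i$ to $l$''. Second, in the contrapositive case $(m,k-1)=1$, $n>\mathrm{ind}_m(k)$, you do not need the full statement $\mathrm{ord}(b)=n$: it suffices that $\bar b$ has order $n$ in $G/\langle a\rangle$, so $b^{\mathrm{ind}_m(k)}\notin\langle a\rangle$ and in particular is nontrivial.
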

\begin{proof}
Suppose that there exists an $s\in\Z^+$ with $s < m$ and with $s$ least so
that ${a^s} = 1$ is a consequence of the relations given for $G.$
Since ${a^m} = 1,$ it follows that $s$ divides $m.$
We could apply Tietze transformations to the presentation to add the relation
${a^s} = 1$ and delete ${a^m} = 1.$
Note that when we replace $m$ by $s,$ the congruences, since $s$ divides $m,$
still hold.
We could then choose to replace the letter $s$ by $m$ throughout.
Thus we may say $ord(a) = m,$ without loss of generality.
It follows that we may assume $k,l < m.$
Note that
\[
\left[ {{a^i},b} \right]
  = {a^{ - i}}{({a^i})^b}
  = {a^{ - i}}{({a^i})^k} = {a^{i(k - 1)}}.
\]
Thus
\begin{enumerate}[(i)]
\item
${a^i} \in Z(G)$ if and only if $i(k - 1) = \;0 \pmod m.$
\item
Also since
\[
\left[ {a,{b^j}} \right]
  = {a^{ - 1}}{a^{{b^j}}}
  = {a^{ - 1}}{a^{{k^j}}}
  = {a^{{k^j} - 1}},
\]
it follows that ${b^j} \in Z(G)$ if and only if ${k^j} - 1 = \;0 \pmod m.$
\item
Letting $d = in{d_m}(k)$ we claim that ${b^d} \in Z(G).$
To see this, note that ${a^{{b^d}}} = {a^{{k^d}}} = a,$ since $d$ is the least
positive integer for which ${k^d} = 1 \pmod m.$
Thus ${b^d}$ commutes with both $a$ and $b$ and is, therefore, central in $G.$
\end{enumerate}
$\left(\Rightarrow\right)$
Assuming $G$ has trivial centre, we will first show that $(m,k - 1) = 1$ by
contradiction.
Suppose that $(m,k - 1) = g$ $(1 < g < m).$
Then there are positive integers $m'$ and $t$ such that $m = m'g,$
$k - 1 = tg,$ with $(m',t) = 1.$
Since $0 < m' < m$ and $ord(a) = m,$ we have ${a^{m'}} \ne 1,$ but
\[m'(k - 1) = m'tg = mt = 0 \pmod m.\]
Thus, by (i) above, we have $1 \ne {a^{m'}} \in Z(G) = \left\{ 1 \right\},$ a
contradiction.
From the relation ${b^n} = {a^l} \in \left\langle a \right\rangle ,$ we have
${a^{{b^n}}} = {a^{{a^l}}} = a;$ thus ${b^n}$ is central in $G$ and, therefore
by assumption, is trivial.
From this we see that ${a^l} = 1$ and, since $ord(a) = m,$ we have
$l = 0 \pmod m.$
Since ${b^n} = 1 \in \left\langle a \right\rangle ,$ we know there are
positive powers of $b$ in $\left\langle a \right\rangle .$ Suppose $j$ is the
least positive integer for which ${b^j} \in \left\langle a \right\rangle $ and
let $i$ be such that ${b^j} = {a^i}.$
Note that $a = {a^{{a^i}}} = {a^{{b^j}}} = {a^{{k^j}}};$ therefore,
${k^j} = 1 \pmod m,$ and since ${b^j}$ is central, ${b^j} = 1.$
Also note that, since $j$ was selected minimally, we have $j = in{d_m}(k).$
Dividing $n$ by $j,$ we have a positive integer $q$ and a non-negative integer
$r$ so that $n = qj + r\;(0 \le r < j),$ and hence
${a^l} = {b^n} = {({b^j})^q}{b^r} = {b^r}.$
This contradicts the minimality of $j$ unless $r = 0.$
Therefore $n = jq.$ Thus ${a^l} = {b^n} = {({b^j})^q} = {({a^i})^q},$ which
shows that ${b^j} = {a^i}$ implies ${b^n} = {a^l}.$
Since ${b^j} = {a^i}$ holds in $G,$ it is a consequence of the relations of
$G;$ thus, by Tietze transformations, we can add ${b^j} = {a^i}$ to the
relations of $G,$ and remove its consequence ${a^l} = {b^n}.$
As for the congruences on the parameters of the presentation, we have already
noted that $j = in{d_m}(k).$
Thus, as $j$ replaces $n$ in the relations when removing ${b^n} = {a^l}$ and
adding ${b^j} = {a^i},$ we drop the condition ${k^n} = 1 \pmod m$ and add
${k^j} = 1 \pmod m.$
Also $i$ replaces $l$ in the deleting of ${b^n} = {a^l}$ and adding
${b^j} = {a^i}.$
Thus we must see that $l(k - 1) = 0 \pmod m$ can be replaced by
$i(k - 1) = 0 \pmod m.$
This is the case because the relation ${b^j} = {a^i}$ implies that
${b^j} = 1,$ since it is central, and therefore ${a^i} = 1.$
This implies that $i = 0 \pmod m$ and therefore, $i(k - 1) = 0 \pmod m.$
Having applied these transformations, we may as well replace the letters $i$
and $j$ by $l$ and $n,$ respectively.

$\left(  \Leftarrow  \right)$
Suppose now that $(m,k - 1) = 1$ and $n = in{d_m}(k).$
Since $\left\langle a \right\rangle  \triangleleft G$ and
${b^n} = {a^l} \in \left\langle a \right\rangle ,$ the elements of the
quotient group
${\raise0.7ex\hbox{$G$} \!\mathord{\left/
 {\vphantom {G {\left\langle a \right\rangle }}}
 \right.\kern-\nulldelimiterspace}
\!\lower0.7ex\hbox{${\left\langle a \right\rangle }$}}$
are right cosets of $\left\langle a \right\rangle $, whose representatives are
powers of $b.$
$G$ is the union of these cosets; therefore, each element of $G$ can be
written in the form ${a^i}{b^j}$ $(0 \le i < m,$ $0 \le j < n).$
Suppose then that some ${a^i}{b^j} \in G$is central.
Note that
\[
1
= \left[ {b,{a^i}{b^j}} \right]
= \left[ {b,{b^j}} \right]{\left[ {b,{a^i}} \right]^{{b^j}}}
= {\left[ {b,{a^i}} \right]^{{b^j}}}
= {\left( {{{\left( {{a^{ - i}}} \right)}^b}{a^i}} \right)^{{b^j}}}
= {\left( {{{\left( {{a^{ - i}}} \right)}^k}{a^i}} \right)^{{k^j}}}
= {a^{i(k - 1){k^j}}}.
\]
Thus $i(k - 1){k^j} = 0 \pmod m.$ By Lemma \ref{lemma:2.1}, we know that
$k$ is invertible in and, by hypothesis, the same holds for $k - 1;$
therefore, we can reduce this congruence to $i = 0 \pmod m.$
It follows that for any ${a^i}{b^j} \in Z(G),$ we have
${a^i}{b^j} = {b^j} \in Z(G).$
From (ii) above, ${b^j}$ is central if and only if ${k^j} - 1 = 0 \pmod m.$
If $j < n,$ the statement ${k^j} - 1 = 0 \pmod m$ would contradict the
minimality of $n( = in{d_m}(k))$ unless $j = 0.$
Thus if ${a^i}{b^j}$ is central, it is trivial and, therefore,
$Z(G) = \left\{ 1 \right\},$ as required.
\end{proof}

\begin{corollary}
\label{cor:2.3}
For $m,n,k\in\Z^+,$ every finite metacyclic group with trivial centre can be
presented as
\[
G(m,n,k)
= \left\langle {a,b;{a^m} = 1,{b^n} = 1,{a^b} = {a^k}} \right\rangle
\]
where $(m,k - 1) = 1$ and $n = in{d_m}(k).$
\end{corollary}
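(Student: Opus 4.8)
The plan is to obtain the statement as an essentially immediate consequence of Lemma~\ref{lemma:2.2} applied to H\"older's presentation $(*)$. By $(*)$, every finite metacyclic group $G$ can be written as
\[
G = \left\langle {a,b;{a^m} = 1,{b^n} = {a^l},{a^b} = {a^k}} \right\rangle
\]
for some $k,l,m,n\in\Z^+$ with ${k^n} = 1 \pmod m$, $l(k - 1) = 0 \pmod m$, and with $\left\langle a\right\rangle$ of order $m$ and index $n.$ I would fix such a presentation and assume in addition that $Z(G) = \{1\}.$

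The $(\Rightarrow)$ direction of Lemma~\ref{lemma:2.2} then gives at once $(m,k - 1) = 1$ and $n = \mathrm{ind}_m(k),$ which are two of the three assertions. To obtain the third --- that the relation ${b^n} = {a^l}$ may be dropped in favour of ${b^n} = 1$ --- I would show $l = 0 \pmod m$ as follows: since ${b^n} = {a^l} \in \left\langle a\right\rangle,$ conjugation by ${b^n}$ fixes $a,$ so ${b^n}$ commutes with both $a$ and $b$ and hence lies in $Z(G) = \{1\};$ therefore ${a^l} = {b^n} = 1,$ and as $ord(a) = m$ this forces $l = 0 \pmod m.$ Consequently $G$ is presented by $\left\langle a,b;\,{a^m} = 1,\,{b^n} = 1,\,{a^b} = {a^k}\right\rangle = G(m,n,k)$ with $(m,k - 1) = 1$ and $n = \mathrm{ind}_m(k),$ as claimed. (Equivalently, one could simply quote the lines in the proof of Lemma~\ref{lemma:2.2} where exactly this computation, together with the Tietze transformation replacing ${b^n} = {a^l}$ by a relation in $b$ alone, is carried out.)

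For completeness I would also record the converse inclusion, so that the family $\{G(m,n,k)\}$ is seen to be neither too small nor too large: any $G(m,n,k)$ with $(m,k - 1) = 1$ and $n = \mathrm{ind}_m(k)$ is the instance of $(*)$ with $l = m$ (so that ${a^l} = 1$ and $l(k - 1) \equiv 0 \pmod m$), hence is finite metacyclic, and the $(\Leftarrow)$ direction of Lemma~\ref{lemma:2.2} shows it has trivial centre. There is no serious obstacle in this corollary; it is bookkeeping on top of Lemma~\ref{lemma:2.2}. The only point that warrants a moment's care is that the vanishing of $l$ is not part of the \emph{statement} of Lemma~\ref{lemma:2.2} and must be read off from its proof (or re-derived as above), and that this last step uses $ord(a) = m,$ which is already built into the presentation $(*)$.
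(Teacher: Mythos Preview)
Your proposal is correct and follows essentially the same route as the paper: both derive the corollary directly from Lemma~\ref{lemma:2.2} applied to H\"older's presentation $(*)$, the only work being to eliminate the parameter $l$. The one minor difference is in how $l\equiv 0\pmod m$ is obtained: you argue group-theoretically that $b^n\in\langle a\rangle$ is central and hence trivial (as is also done inside the proof of Lemma~\ref{lemma:2.2}), whereas the paper's proof of the corollary instead uses the already-established coprimality $(m,k-1)=1$ to cancel $k-1$ in the congruence $l(k-1)\equiv 0\pmod m$ directly.
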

\begin{proof}
We will begin with the presentation $(*),$
$G = \left\langle {a,b;{a^m} = 1,{b^n} = {a^l},{a^b} = {a^k}} \right\rangle,$
along with the conditions ${k^n} = 1 \pmod m$ and
$l(k - 1) = 0 \pmod m.$
We know that every finite metacyclic group has this presentation for some
$k,l,m,n\in\Z^+.$
Lemma \ref{lemma:2.2} says that the additional conditions, $(m,k - 1) = 1$ and
$n = in{d_m}(k),$ are necessary and sufficient to assure that the presentation
gives a finite metacyclic group with trivial centre.
Note that $n = in{d_m}(k)$ implies ${k^n} = 1 \pmod m;$ thus the latter can
be removed from the list as redundant.
The condition $(m,k - 1) = 1$ implies that $k - 1$ is invertible in $\Z_m.$
Multiplying both sides of the congruence $l(k - 1) = 0 \pmod m$ by the
inverse of $k - 1$ yields $l = 0 \pmod m.$
Therefore the relation $l = 0 \pmod m$ replaces $l(k - 1) = 0 \pmod m.$
Applying $l = 0 \pmod m$ to the only relation containing an $l$ replaces
${b^n} = {a^l}$ with ${b^n} = 1.$
Therefore, for $k,l,m,n\in\Z^+$ satisfying the conditions $(m,k - 1) = 1,$
$n = in{d_m}(k),$ and $l = 0 \pmod m,$ the presentations
$\left\langle {a,b;{a^m} = 1,{b^n} = 1,{a^b} = {a^k}} \right\rangle $ give
exactly the finite metacyclic groups with trivial centre.
Note that since the letter $l$ does not occur in the presentation, we may omit
the condition $l = 0 \pmod m$ without loss of generality.
\end{proof}

It will prove efficient to make the following notational conventions.
If $S$ is a subset of the multiplicative semigroup $\Z_m,$ we denote the
invertible elements of $S$ by $I(S)$ and the non-invertible elements of $S$
by $N(S).$
Recall that an element of is invertible if and only if it is coprime to $m.$
For each $t\;(0 \le t \le n)$ we let ${k_t} = {k^t} - 1 \pmod m.$
Thus ${k_1} = k - 1 \pmod m,$ ${k_0} = 0 \pmod m,$ and, since
${k^n} = 1 \pmod m,$ we have ${k_n} = 0 \pmod m.$

\begin{lemma}
\label{lemma:2.4}
If $G$ is a finite metacyclic group presented by $(*)$ (possibly having a
non-trivial centre) with $R = \{k_j\in\Z_m: j\in\Z_n\}$ and
$L = \{-k_j\in\Z_m: j\in\Z_n\}$ then
\begin{enumerate}[(i)]
\item
$0 \in R,$ $0 \in L,$ and
\item
if $n = in{d_m}(k),$ the following conditions are equivalent:
\begin{enumerate}[(a)]
\item the centre of $G$ is trivial,
\item $I(R) \ne \varnothing,$
\item $I(L) \ne \varnothing.$
\end{enumerate}
\end{enumerate}
\end{lemma}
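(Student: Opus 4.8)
The plan is to reduce both parts to elementary number theory, using Lemma~\ref{lemma:2.2} to handle the condition on the centre. Part~(i) is immediate: since $k_0 = k^0 - 1 = 0$ in $\Z_m$ and $0 \in \Z_n$, we have $0 = k_0 \in R$, and likewise $0 = -k_0 \in L$. For part~(ii) I would show, under the assumption $n = in{d_m}(k)$, that each of (a), (b), and (c) is equivalent to the single arithmetic condition $(m, k-1) = 1$; transitivity of equivalence then yields the three-way equivalence.

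I would first establish (b) $\Leftrightarrow (m,k-1) = 1$. If $(m, k-1) = 1$ and $n \ge 2$, then $k_1 = k - 1$ is a member of $R$ coprime to $m$, so $k_1 \in I(R)$ and $I(R) \ne \varnothing$; the borderline case $n = 1$ (which forces $k \equiv 1 \pmod m$ and hence $m = 1$) is checked directly. Conversely, suppose $k_j \in I(R)$, i.e. $(m, k^j - 1) = 1$. From the integer factorisation $k^j - 1 = (k-1)(k^{j-1} + \cdots + k + 1)$ we get $(k-1) \mid (k^j - 1)$, whence $\gcd(m, k-1) \mid \gcd(m, k^j - 1) = 1$, so $(m, k-1) = 1$. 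This implication (that invertibility of \emph{some} $k_j$ forces invertibility of $k_1$) is the one step that needs a genuine, if small, idea; everything else is bookkeeping. For (b) $\Leftrightarrow$ (c): since $-1$ is a unit of $\Z_m$, multiplication by $-1$ is an invertibility-preserving bijection of $\Z_m$, and $L = \{-x : x \in R\}$, so $I(L) = \{-u : u \in I(R)\}$; in particular $I(R) \ne \varnothing$ iff $I(L) \ne \varnothing$.

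Finally, (a) $\Leftrightarrow (m, k-1) = 1$ is a direct appeal to Lemma~\ref{lemma:2.2}: $G$ is presented by $(*)$, so the congruences $k^n = 1 \pmod m$ and $l(k-1) = 0 \pmod m$ hold, and part~(ii) assumes $n = in{d_m}(k)$; hence the hypotheses ``$(m, k-1) = 1$ and $n = in{d_m}(k)$'' of Lemma~\ref{lemma:2.2} reduce to the lone condition $(m, k-1) = 1$, which is therefore equivalent to $Z(G) = \{1\}$. Chaining the three equivalences closes the argument. There is no substantial obstacle here; the only care required is vigilance over the degenerate values $m = 1$ and $n = 1$, where ``invertible'' and the index behave trivially, together with the routine check that $k_1$ genuinely lies in $R$.
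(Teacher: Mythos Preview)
Your proposal is correct and uses the same ingredients as the paper's proof: Lemma~\ref{lemma:2.2} for the equivalence with triviality of the centre, the factorisation $k^j-1=(k-1)(1+k+\cdots+k^{j-1})$ to pass from invertibility of some $k_j$ to invertibility of $k_1$, and the observation that $-1$ is a unit to link $R$ and $L$. The only difference is organisational: the paper argues in a cycle $(a)\Rightarrow(b)\Rightarrow(c)\Rightarrow(a)$, whereas you route everything through the hub condition $(m,k-1)=1$; the content is the same.
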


\begin{proof}\hfill
\begin{enumerate}[(i)]
\item
Note that $0 = {k_0} \in R$ and $0 =  - {k_0} \in L.$
\item
$\left( {a \Rightarrow b} \right)$
Suppose first that the centre of $G$ is trivial and, hence, by
Lemma \ref{lemma:2.2}, we have $(m,{k_1}) = 1.$
It follows that ${k_1}( \in R)$ is invertible in $\Z_m.$

$\left( {b \Rightarrow c} \right)$
If, for some $j\in\Z_n,$ ${k_j} \in R$ is invertible in $\Z_m,$ then
$- {k_j} \in L.$
Denoting the inverse of ${k_j}$ in $\Z_m$ as $k_j^{-1},$ we see that
$( - {k_j})( - k_j^{ - 1}) = 1;$ therefore, $- {k_j}$ is invertible.
Hence $I(L) \ne \varnothing.$

$\left( {c \Rightarrow a} \right)$
Now suppose that there is a $j\in\Z_n$ for which $ - {k_j}$ is invertible.
Note that if $j = 0,$ then $ - {k_0} = 0\; \notin I(L).$
Therefore, we may assume that $0 < j < n.$
If $j = 1,$ we have ${k_1}$ invertible in $\Z_m$ and, hence, coprime to $m.$
Thus, along with the hypothesis $n = in{d_m}(k),$ Lemma \ref{lemma:2.2}
implies that $G$ has trivial centre.
If $1 < j < n,$ then
\[
- {k_j}
=  - {k_1}(1 + k +  \ldots  + {k^{j - 1}})
= {k_1}( - (1 + k +  \ldots  + {k^{j - 1}})).
\]
Since $ - {k_j}$ is invertible, so are both factors; therefore, ${k_1}$ is
invertible.
It follows, by Lemma \ref{lemma:2.2}, that $G$ has trivial centre.\qedhere
\end{enumerate}
\end{proof}

In Section \ref{sec:4}, we will use the sets $R$ and $L$ to construct right
and left commutation semigroups.
The previous lemma illustrates how the triviality of the centre of $G$
splits the number theory associated with the commutation semigroups into two
distinct cases: $R$ and $L$ will each contain $0,$ a non-invertible element,
but they will contain an invertible element exactly when the centre of $G$ is
trivial.
The existence of invertible elements in $R$ (and hence in $L)$ will allow us
to proceed with the arguments given below (see the definitions of
$G$\emph{-semigroup} and \emph{orbit}).
We will not give a complete description of the commutation semigroups in the
case that the centre of $G$ is trivial, but we will be able to obtain some
useful and rather general results with this assumption.
We will also show that our method will allow the calculation of the elements
of the commutation semigroups and their orders provided the reader is willing
to take on some cumbersome modular arithmetic calculations.
A theory for metacyclic groups with non-trivial centre could still be
approached using containers, but it would have to take a different form from
what we do below.

From this point onward, $G(m,n,k),$ abbreviated as $G,$ will be a finite
metacyclic group with trivial centre as described in Corollary \ref{cor:2.3}.

\section{Commutation mappings, mu-maps, and containers}
\label{sec:3}

We begin to study commutation mappings on $G$ with a general result about
commutators.

\begin{lemma}
\label{lemma:3.1}
If $G = G(m,n,k),$ $i,r\in\Z_m$ and $j,s\in\Z_n,$ then
$\left[ {{a^i}{b^j},{a^r}{b^s}} \right] = {a^N}$ where
$N = i{k^j}{k_s} - r{k^s}{k_j} \pmod m.$
\end{lemma}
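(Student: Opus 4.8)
The plan is to compute the commutator directly from the two commutator identities already recorded in this section, reducing everything to commutators of the form $[a^i,b^s]$ and $[b^j,a^r]$, each of which is a single power of $a$ because $\langle a\rangle$ is abelian. The answer will then be a product of two such powers.

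First I would record the elementary computations that feed the expansion. From $a^b=a^k$ we get $(a^i)^{b^s}=a^{ik^s}$, hence
\[
[a^i,b^s]=a^{-i}(a^i)^{b^s}=a^{i(k^s-1)}=a^{ik_s},
\]
and similarly $[a^r,b^j]=a^{rk_j}$, so $[b^j,a^r]=[a^r,b^j]^{-1}=a^{-rk_j}$. I would also note that conjugation by $b^j$ sends $a^N$ to $a^{Nk^j}$, and that $[a^i,a^r]=1$ and $[b^j,b^s]=1$ since powers of $a$ (resp. powers of $b$) commute with one another.

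Next I would apply $[xy,z]=[x,z]^y[y,z]$ with $x=a^i$, $y=b^j$, $z=a^rb^s$ to obtain
\[
[a^ib^j,a^rb^s]=[a^i,a^rb^s]^{b^j}\,[b^j,a^rb^s],
\]
and then expand each factor with $[x,yz]=[x,z][x,y]^z$. For the first factor, $[a^i,a^rb^s]=[a^i,b^s][a^i,a^r]^{b^s}=[a^i,b^s]=a^{ik_s}$, so $[a^i,a^rb^s]^{b^j}=a^{ik_sk^j}$. For the second, $[b^j,a^rb^s]=[b^j,b^s][b^j,a^r]^{b^s}=[b^j,a^r]^{b^s}=(a^{-rk_j})^{b^s}=a^{-rk_jk^s}$. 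Multiplying the two powers of $a$ yields $a^{ik^jk_s-rk^sk_j}$, which is the claimed $a^N$.

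There is no real obstacle here beyond bookkeeping: one only has to be careful about the order in which the two identities are applied, so that the conjugating exponents end up as $k^j$ and $k^s$ in the right places, and about the sign arising from $[b^j,a^r]=[a^r,b^j]^{-1}$. The structural fact that makes the computation collapse is that both $\langle a\rangle$ and the cyclic subgroup generated by $b$ are abelian, so the ``cross terms'' $[a^i,a^r]$ and $[b^j,b^s]$ vanish; this is precisely why the commutator is a single power of $a$ rather than a general word in $a$ and $b$.
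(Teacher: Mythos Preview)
Your proof is correct and follows essentially the same route as the paper's: first apply $[xy,z]=[x,z]^y[y,z]$ to split off $b^j$, then apply $[x,yz]=[x,z][x,y]^z$ to each factor, using that $[a^i,a^r]$ and $[b^j,b^s]$ vanish. The only differences are cosmetic (you compute $[b^j,a^r]$ as $[a^r,b^j]^{-1}$ and spell out the vanishing cross terms, while the paper compresses these steps).
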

\begin{proof}
\begin{align*}
\left[ {{a^i}{b^j},{a^r}{b^s}} \right]
&= {\left[ {{a^i},{a^r}{b^s}} \right]^{{b^j}}}
    \left[ {{b^j},{a^r}{b^s}} \right]
= {\left[ {{a^i},{b^s}} \right]^{{b^j}}}{
   \left[ {{b^j},{a^r}} \right]^{{b^s}}} \\
&= {\left( {{a^{ - i}}{{({a^i})}^{{b^s}}}} \right)^{{b^j}}}{
    \left( {{b^{ - j}}{a^{ - r}}{b^j}{a^r}} \right)^{{b^s}}}
= {\left( {{a^{ - i}}{{({a^i})}^{{k^s}}}} \right)^{{k^j}}}{
   \left( {{a^{ - r{b^j}}}{a^r}} \right)^{{k^s}}}
= {\left( {{a^{i({k^s} - 1)}}} \right)^{{k^j}}}{
   \left( {{a^{r(1 - {k^j})}}} \right)^{{k^s}}} \\
&= {a^{i({k^s} - 1){k^j} + r(1 - {k^j}){k^s}}}
= {a^{i{k^j}{k_s} + r{k^s}{k_j}}}.\qedhere
\end{align*}
\end{proof}

The following concept was introduced by N.D. Gupta in \cite{gupta1966}.

\begin{definition}
For $G = G(m,n,k),$ and $(x,y)$ a pair of elements, of a \emph{mu-map} is a
mapping $\mu (x,y):G \to G$ defined by
$\left( {{a^i}{b^j}} \right)\mu (x,y) = {a^N},$
where $N = xi{k^j} - y{k_j}\pmod m.$
\end{definition}

\begin{lemma}
\label{lemma:3.2}
For each $g \in G$ the mappings $\rho (g)$ and $\lambda (g)$ are mu-maps.
In particular if $g = {a^r}{b^s},$ then
$\rho ({a^r}{b^s}) = \mu ({k_s},r{k^s})$ and
$\lambda ({a^r}{b^s}) = \mu ( - {k_s}, - r{k^s}).$
\end{lemma}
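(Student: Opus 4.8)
The plan is to read both formulas off directly from Lemma~\ref{lemma:3.1}. First I would recall, from the normal form established in the proof of Lemma~\ref{lemma:2.2}, that every element of $G$ can be written as $a^r b^s$ with $r \in \Z_m$ and $s \in \Z_n$; it then suffices to verify the two displayed identities for $g = a^r b^s$.

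For $\rho$, I would take an arbitrary input $a^i b^j$ with $i \in \Z_m$, $j \in \Z_n$. By definition $(a^i b^j)\rho(a^r b^s) = [a^i b^j, a^r b^s]$, and Lemma~\ref{lemma:3.1} evaluates this to $a^N$ with $N = i k^j k_s - r k^s k_j \pmod m$. Grouping the terms as $N = k_s\,(i k^j) - (r k^s)\,k_j$ and comparing with the defining formula $(a^i b^j)\mu(x,y) = a^{\,x i k^j - y k_j}$ of a mu-map shows this is precisely $\mu(k_s, r k^s)$. Hence $\rho(a^r b^s) = \mu(k_s, r k^s)$, and in particular $\rho(g)$ is a mu-map.

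For $\lambda$, I would observe that $(a^i b^j)\lambda(a^r b^s) = [a^r b^s, a^i b^j]$, and then either apply Lemma~\ref{lemma:3.1} once more with the two arguments interchanged, or use the identity $[y,x] = [x,y]^{-1}$ together with the computation above; either way $[a^r b^s, a^i b^j] = a^{-N} = a^{\,-i k^j k_s + r k^s k_j}$. Writing $-N = (-k_s)(i k^j) - (-r k^s)\,k_j$ and comparing once more with the mu-map formula gives $\lambda(a^r b^s) = \mu(-k_s, -r k^s)$.

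I expect no real obstacle here: the whole argument is bookkeeping built on Lemma~\ref{lemma:3.1}. The only points worth a moment's care are that the powers $k^j, k^s$ and the quantities $k_j, k_s$ depend only on $j$ and $s$ modulo $n$ (because $k^n = 1 \pmod m$), so that the asserted mu-maps are genuinely well defined, and that the sign in the second coordinate of Lemma~\ref{lemma:3.1} is propagated correctly when passing from $\rho$ to $\lambda$. Neither presents any difficulty.
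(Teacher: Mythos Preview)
Your proposal is correct and follows essentially the same route as the paper: both compute $(a^ib^j)\rho(a^rb^s)$ and $(a^ib^j)\lambda(a^rb^s)$ via Lemma~\ref{lemma:3.1} and then match the resulting exponent against the defining formula of $\mu(x,y)$. The only cosmetic difference is that you offer the alternative $[y,x]=[x,y]^{-1}$ for the $\lambda$ case, whereas the paper simply reapplies Lemma~\ref{lemma:3.1} with the arguments swapped.
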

\begin{proof}
Note that, by Lemma \ref{lemma:3.1},
\[
({a^i}{b^j})\rho ({a^r}{b^s})
= \left[ {{a^i}{b^j},{a^r}{b^s}} \right]
= {a^N},
\]
with $N = i{k^j}{k_s} - r{k^s}{k_j} \pmod m.$
By the definition of mu-map,
$({a^i}{b^j})\mu ({k_s},r{k^s}) = {a^{N'}}$ with
$N' = {k_s}i{k^j} - r{k^s}{k_j};$ thus
$\rho ({a^r}{b^s}) = \mu ({k_s},r{k^s}).$
Similarly
\[
({a^i}{b^j})\lambda ({a^r}{b^s})
= \left[ {{a^r}{b^s},{a^i}{b^j}} \right]
= {a^N},
\]
with $N = r{k^s}{k_j} - i{k^j}{k_s} \pmod m,$ while
$({a^i}{b^j})\mu ( - {k_s}, - r{k^s}) = {a^{N'}}$
with
$N' =  - {k_s}i{k^j} - ( - r{k^s}){k_j} =  - i{k^j}{k_s} + r{k^s}{k_j}.$
Therefore $\lambda ({a^r}{b^s}) = \mu ( - {k_s}, - r{k^s}).$
\end{proof}

The fundamental problem in constructing the commutation semigroups is that,
when taking products of rho-maps and lambda-maps, their products, in general,
are not rho-maps and lambda-maps.
Identifying the generating maps as mu-maps allows us a clearer view of how
these products are formed since products of mu-maps are mu-maps.

\begin{lemma}
\label{lemma:3.3}
If $\mu ({x_1},{y_1})$ and $\mu ({x_2},{y_2})$ are mu-maps, then their
composition is a mu-map with
\[
\mu ({x_1},{y_1}) \circ \mu ({x_2},{y_2}) = \mu ({x_1}{x_2},{y_1}{x_2}).
\]
\end{lemma}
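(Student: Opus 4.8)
The plan is to prove the composition formula by direct computation, tracking how a general element ${a^i}{b^j}$ is transformed by applying the two mu-maps in succession. The key point to keep in mind is that a mu-map $\mu(x,y)$ sends ${a^i}{b^j}$ to ${a^N}$ with $N = xi{k^j} - y{k_j} \pmod m$, and that the output ${a^N} = {a^N}{b^0}$ lies in $\langle a \rangle$, i.e. has ``$b$-exponent'' zero. This is the structural fact that makes the formula clean: when we feed ${a^N}{b^0}$ into the second mu-map $\mu({x_2},{y_2})$, the factor ${k^j}$ becomes ${k^0} = 1$ and the term involving ${k_j}$ becomes ${k_0} = 0$, so a great deal collapses.

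\medskip

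First I would fix $g = {a^i}{b^j}$ with $i \in \Z_m$, $j \in \Z_n$, and compute
\[
({a^i}{b^j})\,\mu({x_1},{y_1}) = {a^{N_1}}, \qquad N_1 = {x_1}i{k^j} - {y_1}{k_j} \pmod m.
\]
Next I would apply $\mu({x_2},{y_2})$ to ${a^{N_1}} = {a^{N_1}}{b^0}$. Using the definition of a mu-map with $b$-exponent $0$, and recalling ${k^0} = 1$ and ${k_0} = 0 \pmod m$, this gives
\[
({a^{N_1}}{b^0})\,\mu({x_2},{y_2}) = {a^{{x_2}N_1 \cdot 1 - {y_2}\cdot 0}} = {a^{{x_2}N_1}} = {a^{{x_2}({x_1}i{k^j} - {y_1}{k_j})}} = {a^{({x_1}{x_2})i{k^j} - ({y_1}{x_2}){k_j}}}.
\]
Comparing this with the definition of $\mu({x_1}{x_2},\,{y_1}{x_2})$ applied to ${a^i}{b^j}$, which produces ${a^{({x_1}{x_2})i{k^j} - ({y_1}{x_2}){k_j}}}$, I conclude the two maps agree on every element of $G$, hence $\mu({x_1},{y_1}) \circ \mu({x_2},{y_2}) = \mu({x_1}{x_2},\,{y_1}{x_2})$. (One should note the convention that maps act on the right, so $\mu({x_1},{y_1}) \circ \mu({x_2},{y_2})$ means ``first apply $\mu({x_1},{y_1})$, then $\mu({x_2},{y_2})$,'' which is exactly the order used above.)

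\medskip

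There is essentially no obstacle here: the only thing to be careful about is the left-to-right composition convention and the observation that the intermediate output has trivial $b$-part, which is what forces $\mu(x_2,y_2)$ to act on it simply by multiplication by $x_2$ in the exponent. It is also worth remarking that this lemma shows the assignment $(x,y) \mapsto \mu(x,y)$ is a semigroup homomorphism from $\Z_m \times \Z_m$, equipped with the multiplication $(x_1,y_1)(x_2,y_2) = (x_1 x_2, y_1 x_2)$, onto the semigroup of mu-maps; I would mention this since it sets up the container machinery used in the later sections, where products of rho-maps and lambda-maps are analyzed via this monoid of parameter pairs rather than as maps directly.
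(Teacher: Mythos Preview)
Your proof is correct and follows essentially the same direct computation as the paper: apply $\mu(x_1,y_1)$ to ${a^i}{b^j}$, note the output ${a^{N_1}}{b^0}$ has trivial $b$-part so that $k^0=1$ and $k_0=0$ collapse the second application to multiplication by $x_2$, and identify the result with $\mu(x_1x_2,y_1x_2)$. The extra remarks on the composition convention and the semigroup homomorphism are helpful context but not part of the paper's proof.
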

\begin{proof}
\begin{align*}
({a^i}{b^j})\mu ({x_1},{y_1}) \circ \mu ({x_2},{y_2})
&= ({a^{{x_1}i{k^j} - {y_1}{k_j}}}{b^0})\mu ({x_2},{y_2})\\
&= {a^{{x_2}({x_1}i{k^j} - {y_1}{k_j}){k^0} - {y_2}{k_0}}}\\
&= {a^{{x_2}({x_1}i{k^j} - {y_1}{k_j})}} \\
&= {a^{{x_1}{x_2}i{k^j} - {y_1}{x_2}{k_j}}}\\
&= ({a^i}{b^j})\mu ({x_1}{x_2},{y_1}{x_2}).\qedhere
\end{align*}
\end{proof}

In light of this result we make the following definition.

\begin{definition}
The set $\mathrm{M}(G) = \{\mu(x,y): x,y\in\Z_m\}$ of all mu-maps forms a
semigroup under composition of mappings.
We will refer to ${\rm M}(G)$ as the $\mu$-\emph{semigroup associated with}
$G.$
\end{definition}

To obtain the commutation semigroups ${\rm P}(G)$ and $\Lambda (G),$
we will use Lemma \ref{lemma:3.2} to rewrite the generating sets
${{\rm P}_1}(G)$ and ${\Lambda _1}(G)$ as mu-maps and form their closures in
${\rm M}(G)$ under composition.
We can simplify this process further by grouping these mappings together into
sets called \emph{containers}.

\begin{definition}
For any pair $(x,y)\in\Z_m\times\Z_m,$ the
$(x,y)$\emph{-container with respect to} $G$ is the set
$C_G(x,y) = \{\mu(x,yz): z\in\Z_m\}.$
\end{definition}

When no confusion will arises, we abbreviate ${C_G}(x,y)$ as $C(x,y).$
We denote the order of the container by $\left| {C(x,y)} \right|.$
Note that by letting $z = 1$ in $\mu (x,yz)$ we see that
$\mu (x,y) \in C(x,y).$
Containers may intersect, but only in a limited way.

\begin{lemma}
\label{lemma:3.4}
For $G = G(m,n,k)$ and $x_1,x_2,y_1,y_2\in\Z_m,$
$C({x_1},{y_1}) \cap C({x_2},{y_2}) \ne \varnothing$
if and only if
${x_1} = {x_2}\pmod m.$
\end{lemma}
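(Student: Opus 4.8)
The plan is to prove both directions by unpacking the definition of a container and using Lemma \ref{lemma:3.3} to control how the first coordinate of a mu-map behaves. Recall $C(x_1,y_1) = \{\mu(x_1,y_1z): z\in\Z_m\}$ and similarly for $C(x_2,y_2)$, so every element of the first container has first coordinate $x_1$ and every element of the second has first coordinate $x_2$.

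For the easy direction ($\Rightarrow$): if $C(x_1,y_1)\cap C(x_2,y_2)\neq\varnothing$, pick a common element $\mu(x_1,y_1z_1) = \mu(x_2,y_2z_2)$. First I would observe that two mu-maps $\mu(x,y)$ and $\mu(x',y')$ are equal as mappings only if their first coordinates agree modulo $m$: evaluating at $a^1b^0$ gives $a^x = a^{x'}$, hence $x = x'\pmod m$ since $\mathrm{ord}(a) = m$. Applying this to the common element forces $x_1 = x_2\pmod m$ immediately.

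For the converse ($\Leftarrow$): suppose $x_1 = x_2\pmod m$. Here the key point is that $R$ contains an invertible element — this is where the trivial-centre hypothesis enters, via Lemma \ref{lemma:2.4}. Actually, the cleaner route: I want to show that the "$y$-slot" of a container with fixed first coordinate $x$ is a full coset, so that $C(x,y_1)$ and $C(x,y_2)$ are actually \emph{equal} (not merely intersecting). By Lemma \ref{lemma:3.3}, composing $\mu(x,y_1z)$ on the right with $\mu(1,0) = \mathrm{id}$ does nothing, so that does not help directly; instead I would note that $C(x,y) = \{\mu(x,w): w\in y\Z_m\}$ where $y\Z_m$ is the ideal generated by $y$ in the ring $\Z_m$. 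Thus $C(x_1,y_1)\cap C(x_2,y_2)\neq\varnothing$ iff $x_1 = x_2\pmod m$ and $y_1\Z_m \cap y_2\Z_m \neq\varnothing$ — but the latter intersection always contains $0$ (take $z = 0$), giving $\mu(x,0)$ as a common element. So once $x_1 = x_2$, the map $\mu(x_1,0) = \mu(x_2,0)$ lies in both containers, and the intersection is nonempty.

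The main obstacle I anticipate is purely bookkeeping: making sure the claim "$\mu(x,y) = \mu(x',y')$ as mappings implies $x = x'\pmod m$" is argued carefully, since a priori distinct parameter pairs could yield the same mapping. Evaluating the mu-maps on the generator $a$ (i.e.\ $i = 1$, $j = 0$) settles this cleanly because $(a)\mu(x,y) = a^{x\cdot 1\cdot k^0 - y k_0} = a^x$ and $k_0 = 0$, so the $y$ and $j$ contributions vanish and we read off $x\pmod m$ directly. After that, the element $\mu(x,0)$ (obtained by setting $z=0$) is the witness that makes the converse transparent, and no deeper number theory about $m$ is needed for this particular lemma.
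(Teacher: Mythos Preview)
Your proposal is correct and, once you discard the detours, it is essentially identical to the paper's proof: for $(\Rightarrow)$ evaluate the common mu-map at $a=a^1b^0$ to read off $x_1=x_2\pmod m$, and for $(\Leftarrow)$ exhibit $\mu(x_1,0)=\mu(x_2,0)$ (obtained by taking $z=0$) as a common element of the two containers. Your invocation of Lemma~\ref{lemma:2.4} and the trivial-centre hypothesis is a red herring---this lemma needs neither---and the passing remark that $C(x,y_1)$ and $C(x,y_2)$ might be \emph{equal} is false in general, but you rightly abandon both threads before they cause trouble.
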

\begin{proof}
$\left(  \Rightarrow  \right)$
If $\mu \in C({x_1},{y_1}) \cap C({x_2},{y_2})$, then there exist
$z_1,z_2\in\Z_m$ such that
$\mu  = \mu ({x_1},{y_1}{z_1})  = \mu ({x_2},{y_2}{z_2}).$
Applying both maps to $a \in G,$ we have
$(a)\mu ({x_1},{y_1}{z_1}) = {a^{{N_1}}}$ with
${N_1} = {x_1} \cdot 1 \cdot {k^0} - {y_1}{z_1}{k_0} = {x_1}\pmod m,$
while $(a)\mu ({x_2},{y_2}{z_2}) = {a^{{N_2}}}$ with
${N_2} = {x_2} \cdot 1 \cdot {k^0} - {y_2}{z_2}{k_0} = {x_2}\pmod m.$
It follows that ${x_1} = {x_2}\pmod m.$

$\left(  \Leftarrow  \right)$
Note that $\mu ({x_1},0) = \mu ({x_1},{y_1} \cdot 0) \in C({x_1},{y_1})$ while
$\mu ({x_2},0) = \mu ({x_2},{y_2} \cdot 0)$$ \in C({x_2},{y_2}).$
But, since ${x_1} = {x_2}\pmod m,$ we have
$\mu ({x_1},0) = \mu ({x_2},0)$$ \in C({x_1},{y_1}) \cap C({x_2},{y_2}).$
Thus $C({x_1},{y_1}) \cap C({x_2},{y_2}) \ne \varnothing .$
\end{proof}

We need a preliminary lemma to calculate the orders of containers.

\begin{lemma}
\label{lemma:3.5}
Let $G = G(m,n,k)$ and $x,y\in\Z_m.$
Then, for all $z_1,z_2\in\Z_m,$
$\mu (x,y{z_1}) = \mu (x,y{z_2})$
if and only if
${z_1} = {z_2}\;\pmod {m'},$ where $m' = \frac{m}{{(m,y)}}.$
\end{lemma}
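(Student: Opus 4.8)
The plan is to unwind the definition of the mu-map and reduce the equality $\mu(x,yz_1)=\mu(x,yz_2)$ to a single family of congruences in $\Z_m$, then solve that family using the standard fact that $uc\equiv 0\pmod m$ iff $u\equiv 0\pmod{m/(m,c)}$. First I would fix $z_1,z_2\in\Z_m$ and observe that, by definition, for every $a^ib^j\in G$ we have $(a^ib^j)\mu(x,yz_1)=a^{N_1}$ and $(a^ib^j)\mu(x,yz_2)=a^{N_2}$ with $N_1=xik^j-yz_1k_j$ and $N_2=xik^j-yz_2k_j$ taken $\pmod m$. Thus $\mu(x,yz_1)=\mu(x,yz_2)$ as maps $G\to G$ if and only if $N_1\equiv N_2\pmod m$ for all admissible $i$ and $j$, which simplifies (the $xik^j$ terms cancel) to the condition
\[
y(z_1-z_2)k_j\equiv 0 \pmod m \quad\text{for all } j\in\Z_n.
\]
So the whole statement reduces to showing that this family of congruences is equivalent to $z_1\equiv z_2\pmod{m'}$ with $m'=m/(m,y)$.

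The second step is to show that the family of congruences for all $j$ is equivalent to the single congruence at $j=1$, i.e. $y(z_1-z_2)k_1\equiv 0\pmod m$. One direction is trivial (take $j=1$). For the converse, recall from the standing hypotheses (Corollary \ref{cor:2.3} and Lemma \ref{lemma:2.2}) that $(m,k-1)=(m,k_1)=1$, so $k_1$ is invertible in $\Z_m$; hence $y(z_1-z_2)k_1\equiv 0$ already forces $y(z_1-z_2)\equiv 0\pmod m$, which in turn kills $y(z_1-z_2)k_j$ for every $j$. Therefore the equality of the two mu-maps is equivalent to the single congruence $y(z_1-z_2)\equiv 0\pmod m$.

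The final step is the elementary number theory: writing $d=(m,y)$, the congruence $y(z_1-z_2)\equiv 0\pmod m$ holds if and only if $(z_1-z_2)\equiv 0\pmod{m/d}=m'$. This is the standard cancellation lemma for congruences — divide through by $d$, noting $y/d$ and $m/d$ are coprime — and it yields exactly $z_1\equiv z_2\pmod{m'}$, completing the argument. I do not expect a genuine obstacle here; the only point requiring a little care is making sure the reduction in the first step is valid for \emph{all} pairs $(i,j)$ with $0\le i<m$, $0\le j<n$ (so that one really is comparing the maps on all of $G$), but since the offending terms $xik^j$ are identical on both sides this is immediate, and the invertibility of $k_1$ supplied by the trivial-centre hypothesis does the rest.
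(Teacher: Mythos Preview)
Your proposal is correct and follows essentially the same approach as the paper: both arguments reduce the equality of the two mu-maps to the congruence $y(z_1-z_2)\equiv 0\pmod m$ by evaluating at a suitable element (the paper evaluates at $b$, i.e.\ $j=1$, exactly your ``single congruence at $j=1$'' step) and using the invertibility of $k_1$, then finish with the standard gcd cancellation. The only difference is organizational---you package both directions into a chain of equivalences, whereas the paper treats $(\Rightarrow)$ and $(\Leftarrow)$ separately---but the content is the same.
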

\begin{proof}
Letting $(m,y) = g,$ with $m = m'g$ and $y = y'g,$ it follows that
$(m',y') = 1.$
Notice that $\frac{m}{{(m,y)}} = \frac{{m'g}}{g} = m'.$

$\left(  \Rightarrow  \right)$
Supposing that $\mu (x,y{z_1}) = \mu (x,y{z_2}),$ we will apply both mappings
to $b \in G.$
This gives $(b)\mu (x,y{z_1}) = {a^{{N_1}}}$ with
${N_1} = x \cdot 0 \cdot {k^1} - y{z_1}{k_1}\;\;$ and
$(b)\mu (x,y{z_2}) = {a^{{N_2}}}$ with
${N_2} = x \cdot 0 \cdot {k^1} - y{z_2}{k_1}.$
It follows that $y{z_1}{k_1} = y{z_2}{k_1} \pmod m.$
One of the conditions on the presentation of $G$ is that $(m,{k_1}) = 1;$
therefore ${k_1}$ is invertible in $\Z_m$ and, multiplying both sides of the
congruence by $k_1^{ - 1},$ we have $y{z_1} = y{z_2} \pmod m.$
This can be rewritten $y'g{z_1} = y'g{z_2}\;\pmod {m'g}.$
Thus we have $y'{z_1} = y'{z_2}\;\pmod {m'}.$
Since $(m',y') = 1,$ $y'$ is invertible in $\Z_m.$
Thus we can multiply both sides of the congruence by the inverse of $y'$ in
$\Z_m$ to obtain ${z_1} = {z_2}\;\pmod {m'}.$

$\left(  \Leftarrow  \right)$
Conversely, we will assume that ${z_1} = {z_2}\pmod {m'}$ and show that when
the mappings $\mu (x,y{z_1})$ and $\mu (x,y{z_2})$ are applied to any
${a^i}{b^j} \in G$ the images are equal.
We begin with $({a^i}{b^j})\mu (x,y{z_1}) = {a^{{N_1}}}$ and
$({a^i}{b^j})\mu (x,y{z_2}) = {a^{{N_2}}}$ with
${N_1} = xi{k^j} - y{z_1}{k_j}$ and ${N_2} = xi{k^j} - y{z_2}{k_j}.$
Therefore, ${N_2} - {N_1} = y({z_1} - {z_2}){k_j} \pmod m.$
Our hypothesis is equivalent to ${z_1} - {z_2} = 0\;\pmod {m'}.$
Multiplying both sides of the congruence by $y'{k_j}$ yields
$y'({z_1} - {z_2}){k_j} = 0\;\pmod {m'}.$
This can then can be transformed to
$y'g({z_1} - {z_2}){k_j} = g \cdot 0\;\pmod {m'g},$ or
$y({z_1} - {z_2}){k_j} = 0 \pmod m.$
Thus ${N_2} - {N_1} = 0 \pmod m$ and our conclusion follows.
\end{proof}

\begin{corollary}
\label{cor:3.6}
If $G = G(m,n,k)$ and $x\in\Z_m$ then, for all $y_1,y_2\in\Z_m,$
$\mu (x,{y_1}) = \mu (x,{y_2})$ if and only if ${y_1} = {y_2} \pmod m.$
\end{corollary}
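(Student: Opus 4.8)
The plan is to read this off as the special case $y = 1$ of Lemma~\ref{lemma:3.5}. Since $1 \in \Z_m$, Lemma~\ref{lemma:3.5} applies with $y = 1$, and there $m' = \frac{m}{(m,1)} = \frac{m}{1} = m$. So the lemma states that for all $z_1, z_2 \in \Z_m$ we have $\mu(x, 1 \cdot z_1) = \mu(x, 1 \cdot z_2)$ if and only if $z_1 = z_2 \pmod{m}$. Because $1 \cdot z_i = z_i$, this is exactly the assertion that $\mu(x, z_1) = \mu(x, z_2)$ if and only if $z_1 = z_2 \pmod m$; relabelling $z_1, z_2$ as $y_1, y_2$ gives the corollary.

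There is essentially no obstacle here: the only thing to verify is the trivial computation $(m,1) = 1$, which makes the modulus $m'$ in Lemma~\ref{lemma:3.5} collapse to $m$, after which the corollary is a verbatim instance of the lemma. (If one preferred a self-contained argument, one could instead apply both maps to $b \in G$ to obtain $y_1 k_1 \equiv y_2 k_1 \pmod m$ in the forward direction and cancel $k_1$, which is invertible since $(m,k_1) = 1$, to get $y_1 \equiv y_2 \pmod m$; for the converse, $y_1 \equiv y_2 \pmod m$ forces $y_1 k_j \equiv y_2 k_j \pmod m$ for every $j$, so the images $a^{\,x i k^j - y_\ell k_j}$ of each $a^i b^j$ coincide. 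But deriving it from Lemma~\ref{lemma:3.5} is the cleaner route and is the one I would take.)
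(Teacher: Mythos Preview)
Your proof is correct and matches the paper's own argument exactly: the paper likewise instantiates Lemma~\ref{lemma:3.5} with $y=1$, notes $(m,1)=1$ so $m'=m$, and relabels $z_1,z_2$ as $y_1,y_2$.
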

\begin{proof}
In Lemma \ref{lemma:3.5}, replace $y$ by $1,$ ${z_1}$ by ${y_1},$ and
${z_2}$ by ${y_2}.$
Note that $(m,y) = (m,1) = 1;$ thus $m' = m.$
\end{proof}

\begin{corollary}
\label{cor:3.7}
If $G = G(m,n,k)$ and $x,y\in\Z_m,$ then
$\left| {C(x,y)} \right| = \frac{m}{{(m,y)}}.$
\end{corollary}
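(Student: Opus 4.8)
\textbf{Proof proposal for Corollary \ref{cor:3.7}.}
The plan is to realize $|C(x,y)|$ as the size of the image of the map
$\varphi:\Z_m\to C(x,y)$ given by $\varphi(z)=\mu(x,yz)$, and then to count
that image using Lemma \ref{lemma:3.5}. By definition $C(x,y)=\{\mu(x,yz):z\in\Z_m\}$ is
precisely the image of $\varphi$, so $\varphi$ is surjective onto $C(x,y)$ and
$|C(x,y)|$ equals the number of distinct values it takes.

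First I would invoke Lemma \ref{lemma:3.5} to describe the fibres of $\varphi$:
for $z_1,z_2\in\Z_m$ we have $\varphi(z_1)=\varphi(z_2)$ if and only if
$z_1=z_2\pmod{m'}$, where $m'=\frac{m}{(m,y)}$. Thus $\varphi$ identifies exactly those
$z_1,z_2$ that are congruent modulo $m'$, i.e. it factors through the natural
surjection $\Z_m\twoheadrightarrow\Z_{m'}$ and induces an injection
$\Z_{m'}\hookrightarrow C(x,y)$. Since $m'$ divides $m$, the $m$ residues
$0,1,\dots,m-1$ fall into exactly $m'$ distinct classes modulo $m'$, each of which is
attained; hence $\varphi$ takes exactly $m'$ distinct values, giving
$|C(x,y)|=m'=\frac{m}{(m,y)}$.

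I do not anticipate any real obstacle here: the statement is an immediate
bookkeeping consequence of Lemma \ref{lemma:3.5}. The only point requiring a
moment's care is the counting step — confirming that the equivalence relation
``$z_1\sim z_2\iff z_1\equiv z_2\pmod{m'}$'' on $\Z_m$ has exactly $m'$ classes,
which follows from $m'\mid m$ — and noting that every class is represented because
$z$ ranges over all of $\Z_m$. Everything else is a direct translation of the
lemma into a cardinality statement.
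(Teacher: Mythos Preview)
Your proposal is correct and follows the same approach as the paper: both deduce the cardinality directly from Lemma~\ref{lemma:3.5} by counting how many $z\in\Z_m$ give distinct maps $\mu(x,yz)$. The paper's proof is simply the one-line version of your argument, stating that Lemma~\ref{lemma:3.5} yields exactly $m/(m,y)$ distinct mappings in $C(x,y)$.
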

\begin{proof}
From Lemma \ref{lemma:3.5}, there are exactly  $\frac{m}{{(m,y)}}$
distinct mappings in the container $C(x,y).$
\end{proof}

We will use the following lemmas in several of our examples.

\begin{lemma}
\label{lemma:3.8}
If $G = G(m,n,k),$ for each $x,y\in\Z_m,$
\begin{enumerate}[(i)]
\item $C(x,yz) \subseteq C(x,y),$
\item $C(x,y) \subseteq C(x,1),$
\item if $u\in I(\Z_m),$ then $C(x,y) = C(x,yu),$ and
\item $C(x,y) = C(x,1)$ if and only if $y\in I(\Z_m).$
\end{enumerate}
\end{lemma}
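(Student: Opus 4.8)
The plan is to prove the four parts in the stated order, each one leaning on its predecessors, with only part~(iv) requiring anything beyond the defining description $C_G(x,y) = \{\mu(x,yz): z\in\Z_m\}$.

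First I would dispose of (i) directly from the definition: a typical element of $C(x,yz)$ has the form $\mu(x,(yz)w) = \mu(x,y(zw))$ for some $w\in\Z_m$, and since $zw\in\Z_m$ this element lies in $C(x,y)$. Part~(ii) is then the instance of (i) obtained by writing $y = 1\cdot y$, which gives $C(x,y) = C(x,1\cdot y)\subseteq C(x,1)$. For (iii), if $u\in I(\Z_m)$ then one inclusion $C(x,yu)\subseteq C(x,y)$ is immediate from (i); for the reverse, use the inverse $u^{-1}\in\Z_m$ to write $y = (yu)u^{-1}$, so that (i) yields $C(x,y) = C(x,(yu)u^{-1})\subseteq C(x,yu)$, hence equality.

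Finally, for (iv): the implication ($\Leftarrow$) is just (iii) applied with the roles of $1$ and $y$ interchanged, namely if $y\in I(\Z_m)$ then $C(x,1) = C(x,1\cdot y) = C(x,y)$. For ($\Rightarrow$), I would invoke Corollary \ref{cor:3.7}: we have $\left|C(x,1)\right| = \frac{m}{(m,1)} = m$ while $\left|C(x,y)\right| = \frac{m}{(m,y)}$, so $C(x,y) = C(x,1)$ forces $(m,y) = 1$, i.e.\ $y\in I(\Z_m)$. There is essentially no obstacle here; the only non-bookkeeping ingredient is the order formula of Corollary \ref{cor:3.7}, needed for the one nontrivial direction of (iv), and the only point requiring care elsewhere is to check that the scalars $z$, $w$, $u$, and $u^{-1}$ all genuinely range over (or lie in) $\Z_m$.
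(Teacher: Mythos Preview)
Your proof is correct and follows essentially the same approach as the paper for parts (i)--(iii) and the ($\Leftarrow$) direction of (iv). The one small difference is in (iv)($\Rightarrow$): you compare cardinalities via Corollary~\ref{cor:3.7}, whereas the paper argues elementwise, observing that $\mu(x,1)\in C(x,y)$ forces $\mu(x,1)=\mu(x,yz)$ for some $z$ and then invoking Corollary~\ref{cor:3.6} to get $yz=1\pmod m$; both arguments are equally short and rest on the same Lemma~\ref{lemma:3.5}.
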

\begin{proof}\hfill
\begin{enumerate}[(i)]
\item
Let $\mu (x,(yz)w)$ be an arbitrary element of $C(x,yz)$ for some $w\in\Z_m.$
Since $wz\in\Z_m,$ we have $\mu (x,y(zw)) \in C(x,y)$ and our result follows.
\item
In part (i), let $y = 1$ and change $z$ to $y.$
\item
$\left(  \subseteq  \right)$
Let $\mu(x,y)$ $(z\in\Z_m)$ be an arbitrary element of $C(x,y).$
Since $zu^{-1}\in\Z_m,$ it follows that $\mu (x,yu(z{u^{ - 1}})) \in C(x,yu).$
But $\mu (x,yu(z{u^{ - 1}})) = \mu (x,yz).$
Thus we have shown that $\mu (x,yz) \in C(x,yu).$

$\left(  \supseteq  \right)$
This is immediate from part (i).
\item
$\left(  \Rightarrow  \right)$
Since $\mu (x,1) \in C(x,1) = C(x,y),$ there is a $z\in\Z_m$ so that
$\mu (x,yz) = \mu (x,1).$
By Corollary \ref{cor:3.6}, we have $yz = 1 \pmod m,$ from which it follows
that $y\in U(\Z_m).$

$\left(  \Leftarrow  \right)$
This follows directly from part (iii) by letting $y = 1.$\qedhere
\end{enumerate}
\end{proof}

\begin{lemma}
\label{lemma:3.9}
If $G = G(m,n,k)$ and $x,y_1,y_2\in\Z_m,$ then
$C(x,{y_1}) \subseteq C(x,{y_2})$ if and only if there exists $z\in\Z_m$ such
that ${y_1} = {y_2}z \pmod m.$
\end{lemma}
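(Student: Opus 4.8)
The plan is to derive both implications directly from two facts already established: the containment $C(x,yz)\subseteq C(x,y)$ of Lemma~\ref{lemma:3.8}(i), and the cancellation criterion $\mu(x,y_1)=\mu(x,y_2)\iff y_1=y_2\pmod m$ of Corollary~\ref{cor:3.6}. Neither direction requires new computation, so there is no real obstacle here; the lemma is essentially a repackaging of these two results, and the only care needed is to invoke the observation (recorded just after the definition of container) that $\mu(x,y)\in C(x,y)$ for every $y$.

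For the reverse implication, suppose $y_1=y_2z\pmod m$ for some $z\in\Z_m$. Then $C(x,y_1)$ and $C(x,y_2z)$ are literally the same set, since the container $C(x,y)$ depends only on the residue of $y$ modulo $m$. By Lemma~\ref{lemma:3.8}(i) we have $C(x,y_2z)\subseteq C(x,y_2)$, and hence $C(x,y_1)=C(x,y_2z)\subseteq C(x,y_2)$, as desired.

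For the forward implication, assume $C(x,y_1)\subseteq C(x,y_2)$. Since $\mu(x,y_1)\in C(x,y_1)$, we get $\mu(x,y_1)\in C(x,y_2)$, so by the definition of $C(x,y_2)$ there is some $z\in\Z_m$ with $\mu(x,y_1)=\mu(x,y_2z)$. Applying Corollary~\ref{cor:3.6} (with the two second coordinates being $y_1$ and $y_2z$) yields $y_1=y_2z\pmod m$, which is exactly the required condition. The same argument pattern — pick the distinguished element $\mu(x,y_1)$ of the smaller container and locate it in the larger one via Corollary~\ref{cor:3.6} — is the one step I would expect to write out in full, and it is routine.
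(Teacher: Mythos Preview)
Your proof is correct and follows essentially the same route as the paper: the reverse direction is identical (via Lemma~\ref{lemma:3.8}(i)), and for the forward direction the paper also picks out $\mu(x,y_1)\in C(x,y_2)$ to obtain $\mu(x,y_1)=\mu(x,y_2z)$. The only cosmetic difference is that the paper evaluates both maps at $b$ and cancels the invertible $k_1$ to deduce $y_1=y_2z\pmod m$, whereas you invoke Corollary~\ref{cor:3.6} directly to reach the same conclusion.
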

\begin{proof}
$\left(  \Rightarrow  \right)$
We have $\mu (x,{y_1}) \in C(x,{y_1}) \subseteq C(x,{y_2});$ therefore, there
exists $z\in\Z_m$ such that $\mu (x,{y_1}) = \mu (x,{y_2}z).$
Applying these mappings to $b,$ we obtain $(b)\mu (x,{y_1}) = {a^{{N_1}}}$
where ${N_1} =  - {y_1}{k_1} \pmod m$ and $(b)\mu (x,{y_2}z) = {a^{{N_2}}}$
where ${N_2} =  - {y_1}z{k_1} \pmod m.$
Thus ${y_1}{k_1} = {y_2}z{k_1} \pmod m$ and, since ${k_1}$is invertible,
we have ${y_1} = {y_2}z \pmod m.$

$\left(  \Leftarrow  \right)$
The fact that $C(x,{y_1}) = C(x,{y_2}z)  \subseteq C(x,{y_2})$ follows
immediately from Lemma \ref{lemma:3.8}(i).
\end{proof}

\section{A generalized approach}
\label{sec:4}

Recall that if $\varnothing\neq S \subseteq \Z_m,$ ${S^*}$ denotes the
subsemigroup of $Z_m$ generated by $S,$ and the invertibles $I({S^*})$ form a
subgroup of $\Z_m.$
It follows that $1 \in I({S^*})$ and, since $I({S^*})$ is a finite group, for
each $x \in I({S^*}),$ there is a least non-negative integer $u$ for which
${x^u} = 1.$
Thus ${x^{ - 1}} = {x^{u - 1}} \in I({S^*}).$

\begin{definition}
A non-empty subset S of $\Z_m$ is a \emph{base} if $0 \in S$ and $I(S)$ is
non-empty.
\end{definition}

\begin{definition}
For $G = G(m,n,k)$ and $S$ a base, the $G$\emph{-semigroup based on} $S$,
denoted ${\Sigma _G}(S),$ is the subsemigroup of ${\rm M}(G)$ generated by
$\Gamma_\mu(S) = \{\mu(s,z): s\in S, z\in\Z_m\}.$
We call the set ${\Gamma_\mu }(S)$ the set of $\mu$\emph{-generators
associated with} $S$ and the set
$\Pi_\mu(S) = \{\mu(ss^*, s^*z): s\in S, s^*\in S^*, z\in\Z_m\}$
the set of $\mu$\emph{-products associated with} $S.$
\end{definition}

\begin{lemma}
\label{lemma:4.1}
For $G = G(m,n,k)$ and $S$ a base,
${\Sigma _G}(S) = {\Gamma _\mu }(S) \cup {\Pi _\mu }(S).$
\end{lemma}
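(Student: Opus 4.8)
The plan is to show the two inclusions $\Sigma_G(S) \subseteq \Gamma_\mu(S) \cup \Pi_\mu(S)$ and $\Gamma_\mu(S) \cup \Pi_\mu(S) \subseteq \Sigma_G(S)$ separately. The reverse inclusion is the easy one: $\Gamma_\mu(S) \subseteq \Sigma_G(S)$ holds by definition, and for $\Pi_\mu(S)$ I would observe that a typical element $\mu(ss^*, s^*z)$ can be realized as a composite of generators. Specifically, using Lemma \ref{lemma:3.3}, $\mu(s,z') \circ \mu(s^*, z'')$ has first coordinate $ss^*$; choosing the generators appropriately (and using that $s^* \in S^*$ is itself a product of elements of $S$, so a mu-map with first coordinate $s^*$ and suitable second coordinate lies in $\Sigma_G(S)$ by iterating Lemma \ref{lemma:3.3}), one lands exactly on $\mu(ss^*, s^*z)$. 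I would want to check that the second coordinate works out: composing $\mu(s, y_1) \circ \mu(s^*, y_2)$ gives $\mu(ss^*, y_1 s^*)$ by Lemma \ref{lemma:3.3}, so taking $y_1 = z$ produces $\mu(ss^*, zs^*)$, which is the general form of an element of $\Pi_\mu(S)$ after renaming. A short induction confirms that longer products of generators stay inside $\Pi_\mu(S)$ once they have length at least two.

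For the forward inclusion, the heart of the matter is that the set $\Gamma_\mu(S) \cup \Pi_\mu(S)$ is already closed under composition and contains the generators, hence contains the semigroup they generate. Containing $\Gamma_\mu(S)$ is trivial. For closure, I would take two elements of $\Gamma_\mu(S) \cup \Pi_\mu(S)$ and compose them, handling the cases by how many factors each already is: generator composed with generator gives a length-two product, which I must show lies in $\Pi_\mu(S)$; generator composed with product, product composed with generator, and product composed with product all reduce, via Lemma \ref{lemma:3.3}, to a mu-map whose first coordinate is a product of elements of $S$ and whose second coordinate has the required shape. The key computation is always the same: $\mu(x_1, y_1) \circ \mu(x_2, y_2) = \mu(x_1 x_2, y_1 x_2)$, so first coordinates multiply (staying in $S^*$) and the new second coordinate is the old first second-coordinate times the new first coordinate.

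The main obstacle I anticipate is bookkeeping the exact form of the second coordinate in $\Pi_\mu(S)$ and verifying that every composite really does fit the template $\mu(ss^*, s^*z)$ with $s \in S$, $s^* \in S^*$, $z \in \Z_m$ — in particular, that the leading factor can always be taken to be a single element $s$ of $S$ (peeling off the first generator in a product) while the remaining factors collapse into the $s^*$ part, and that the accumulated second coordinate is always expressible as $s^* z$ for a free parameter $z$. Here I would use that $z$ ranges over all of $\Z_m$, so absorbing extra factors into it is harmless, and that $S \cdot S^* \subseteq S^*$ with $S^*$ closed under multiplication. One subtlety worth stating explicitly: when composing two elements already in $\Pi_\mu(S)$, say $\mu(s_1 s_1^*, s_1^* z_1) \circ \mu(s_2 s_2^*, s_2^* z_2)$, the result is $\mu\big(s_1 (s_1^* s_2 s_2^*),\, s_1^*(s_2 s_2^*) z_1\big)$, wait — recomputing with Lemma \ref{lemma:3.3} the second coordinate is $(s_1^* z_1)(s_2 s_2^*)$, which has the form $(s_1^* s_2 s_2^*) z_1$, so it is of the shape $s^* z$ with $s^* = s_1^* s_2 s_2^* \in S^*$ and leading factor $s_1 \in S$; this confirms the template is preserved. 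Assembling these cases yields closure, and the lemma follows.
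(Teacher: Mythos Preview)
Your proposal is correct and rests on the same computation as the paper's proof, namely iterating Lemma~\ref{lemma:3.3} to see that a product of generators $\mu(s_1,z_1)\cdots\mu(s_t,z_t)$ equals $\mu(s_1s_2\cdots s_t,\,z_1s_2\cdots s_t)$, which is precisely the $\Pi_\mu(S)$ template with $s=s_1$, $s^*=s_2\cdots s_t$, $z=z_1$. The paper simply carries out this single computation for an arbitrary $t$-fold product and reads off both inclusions at once, whereas you organize the forward inclusion as a case-by-case closure check; the content is the same, and your version could be shortened by adopting the paper's direct computation.
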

\begin{proof}
We first show that ${\Pi _\mu }(S)$ is the set of products of two or more
$\mu$-generators.
Suppose we form the product of two or more generators
$\mu ({s_1},{z_1})\mu ({s_2},{z_2}) \cdots \mu ({s_t},{z_t}).$
By repeated use of Lemma \ref{lemma:3.3}, the product can be written
$\mu ({s_1}{s_2} \cdots {s_t},{z_1}{s_2} \cdots {s_t}).$
Note then that ${s_1}$ could be any element of $S$ and ${s_2} \cdots {s_t}$
represents an arbitrary element of ${S^*};$
therefore, each $\mu (s{s^*},z{s^*}) \in {\Pi _\mu }(S)$ is such a product and
each product is an element of  ${\Pi _\mu }(S).$
Since we have included the generating set ${\Gamma _\mu }(S)$ and all products
of generators, it is clear that
${\Sigma _G}(S) = {\Gamma _\mu }(S) \cup {\Pi _\mu }(S).$
\end{proof}

By proper selection of $S,$ we will be able to produce both the left and right
commutation semigroups as particular instances of ${\Sigma _G}(S).$
The theorems we want to exhibit for ${\rm P}(G)$ and $\Lambda (G)$ will follow
immediately from the same results for ${\Sigma _G}(S).$
In addition to representing the commutation semigroups, the construction of
${\Sigma _G}(S)$ produces a subsemigroup of ${\rm M}(G)$ for each choice of a
base $S$ and therefore may be worthy of further study on its own.

First we will establish that the commutation semigroups are, indeed, instances
of ${\Sigma _G}(S).$

\begin{lemma}
\label{lemma:4.2}
If $G = G(m,n,k),$ $R = \{k_j\pmod m: j\in\Z_n\},$ and
$L = \{-k_j\pmod m: j\in\Z_n\}$ then $R$ and $L$ are bases with
${\rm P}(G) = {\Sigma _G}(R)$ and $\Lambda (G) = {\Sigma _G}(L).$
\end{lemma}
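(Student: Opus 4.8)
The plan is to verify each of the three assertions in turn: that $R$ and $L$ are bases, that ${\rm P}(G) = \Sigma_G(R)$, and that $\Lambda(G) = \Sigma_G(L)$. The first assertion is essentially already in hand: by Lemma \ref{lemma:2.4}(i) we have $0 \in R$ and $0 \in L$, and since $G$ has trivial centre (our standing assumption from Corollary \ref{cor:2.3} onward), Lemma \ref{lemma:2.4}(ii) gives $I(R) \neq \varnothing$ and $I(L) \neq \varnothing$. Hence both $R$ and $L$ satisfy the definition of a base, and in particular $R^*$ and $L^*$ each contain an invertible element, so the constructions $\Sigma_G(R)$ and $\Sigma_G(L)$ make sense.

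For the equality ${\rm P}(G) = \Sigma_G(R)$, I would compare generating sets and then invoke Lemma \ref{lemma:4.1}. By definition ${\rm P}(G)$ is the subsemigroup of ${\rm M}(G)$ generated by ${\rm P}_1(G) = \{\rho(g) : g \in G\}$. Every element of $G$ can be written $a^r b^s$ with $r \in \Z_m$, $s \in \Z_n$, and Lemma \ref{lemma:3.2} tells us $\rho(a^r b^s) = \mu(k_s, r k^s)$. As $r$ ranges over $\Z_m$ and $k^s$ is a fixed invertible element of $\Z_m$ (invertible by Lemma \ref{lemma:2.1}), the product $r k^s$ ranges over all of $\Z_m$; thus for each fixed $s$ the set $\{\rho(a^r b^s) : r \in \Z_m\}$ is exactly $\{\mu(k_s, z) : z \in \Z_m\}$. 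Taking the union over $s \in \Z_n$ gives
\[
{\rm P}_1(G) = \{\mu(k_s, z) : s \in \Z_n,\ z \in \Z_m\} = \{\mu(s, z) : s \in R,\ z \in \Z_m\} = \Gamma_\mu(R),
\]
using the definition $R = \{k_j \bmod m : j \in \Z_n\}$. Since both ${\rm P}(G)$ and $\Sigma_G(R)$ are the subsemigroup of ${\rm M}(G)$ generated by this common set, they coincide; alternatively one may cite Lemma \ref{lemma:4.1} directly to say $\Sigma_G(R) = \Gamma_\mu(R) \cup \Pi_\mu(R)$ is precisely the closure of $\Gamma_\mu(R) = {\rm P}_1(G)$ under composition, which is ${\rm P}(G)$.

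The argument for $\Lambda(G) = \Sigma_G(L)$ is identical in form: Lemma \ref{lemma:3.2} gives $\lambda(a^r b^s) = \mu(-k_s, -r k^s)$, and as $r$ runs over $\Z_m$ the quantity $-r k^s$ again runs over all of $\Z_m$ (since $-k^s$ is invertible), so $\Lambda_1(G) = \{\mu(-k_s, z) : s \in \Z_n,\ z \in \Z_m\} = \Gamma_\mu(L)$ with $L = \{-k_j \bmod m : j \in \Z_n\}$; the two semigroups then agree by the same reasoning. I do not anticipate a serious obstacle here: the only point requiring a moment's care is the observation that multiplication by the fixed unit $k^s$ (resp. $-k^s$) permutes $\Z_m$, so that allowing $r$ to vary already produces the full second-coordinate freedom built into $\Gamma_\mu$, matching the generating sets exactly rather than merely up to containment.
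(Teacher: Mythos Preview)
Your proposal is correct and follows essentially the same approach as the paper: both verify that $R$ and $L$ are bases via Lemma~\ref{lemma:2.4}, then use Lemma~\ref{lemma:3.2} together with the invertibility of $k^s$ to show that ${\rm P}_1(G)=\Gamma_\mu(R)$ (and similarly $\Lambda_1(G)=\Gamma_\mu(L)$), concluding that the semigroups agree because they have the same generating set.
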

\begin{proof}
Before we can form ${\Sigma _G}(S),$ we must confirm that $S$ is a base;
in particular, we must show that $R$ and $L$ are bases.
By Lemma \ref{lemma:2.4}, we have zero in both $N(R)$ and $N(L),$ and since
$G$ has trivial centre, $I(R)$ and $I(L)$ are non-empty.
Therefore $R$ and $L$ are bases.
We will prove ${\rm P}(G) = {\Sigma _G}(R)$ and note that a similar argument
can be given to prove $\Lambda (G) = {\Sigma _G}(L).$
By Lemma \ref{lemma:3.2}, we have $\rho ({a^r}{b^s}) = \mu ({k_s},r{k^s})$
for each $r\in\Z_m, s\in\Z_n.$
Since ${k_s} \in R$ and $r{k^s} \in {\Z_m},$
$\mu ({k_s},r{k^s}) \in {\Gamma _\mu }(R).$
Since $k,$ and thus ${k^s},$ is invertible in $\Z_m,$ it follows that
$\{rk^s: r\in \Z_m\} = \Z_m.$
Every element of ${\Gamma _\mu }(R)$ occurs in the form $\mu ({k_s},r{k^s});$
therefore, $\{\rho(a^ib^j):i\in\Z_m, j\in\Z_n\} = \Gamma_\mu(R).$
Since ${\rm P}(G)$ and ${\Sigma _G}(R)$ are generated by the same mappings,
they are equal.
\end{proof}

\begin{lemma}
\label{lemma:4.3}
Suppose $G = G(m,n,k)$ and $S$ is a base.
For each $x,y\in\Z_m,$ $\mu (x,y) \in {\Sigma _G}(S)$ if and only if
$C(x,y) \subseteq {\Sigma _G}(S).$
\end{lemma}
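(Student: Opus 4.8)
The plan is to prove both directions, with the reverse direction being nearly trivial and the forward direction requiring the structural description of $\Sigma_G(S)$ from Lemma~\ref{lemma:4.1}. For $(\Leftarrow)$, observe that $\mu(x,y) = \mu(x,y\cdot 1) \in C(x,y)$ by the remark following the definition of container, so if $C(x,y)\subseteq \Sigma_G(S)$ then certainly $\mu(x,y)\in\Sigma_G(S)$. The content is all in $(\Rightarrow)$.

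For $(\Rightarrow)$, suppose $\mu(x,y)\in\Sigma_G(S)$. By Lemma~\ref{lemma:4.1}, $\mu(x,y)$ lies in $\Gamma_\mu(S)\cup\Pi_\mu(S)$, so I split into two cases. If $\mu(x,y)\in\Gamma_\mu(S)$, then $\mu(x,y) = \mu(s,z)$ for some $s\in S$, $z\in\Z_m$; applying both maps to $a$ and to $b$ (as in the proofs of Lemmas~\ref{lemma:3.4} and \ref{lemma:3.5}) forces $x = s \pmod m$ and $y = z \pmod m$ (the latter using that $k_1$ is invertible). Now for \emph{any} $w\in\Z_m$, the map $\mu(s, zw) = \mu(x, yw)$ is again in $\Gamma_\mu(S)$ since $s\in S$ and $zw\in\Z_m$; as $w$ ranges over $\Z_m$ this gives exactly $C(x,y)\subseteq\Gamma_\mu(S)\subseteq\Sigma_G(S)$. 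The case $\mu(x,y)\in\Pi_\mu(S)$ is the same idea with a bookkeeping twist: here $\mu(x,y) = \mu(ss^*, s^*z)$ for some $s\in S$, $s^*\in S^*$, $z\in\Z_m$, so $x = ss^*$ and $y = s^*z \pmod m$. For any $w\in\Z_m$, replace $z$ by $zw$: the element $\mu(ss^*, s^*zw) = \mu(x, yw)$ still has the form $\mu(s\,s^*, s^*(zw))$ with $zw\in\Z_m$, hence lies in $\Pi_\mu(S)\subseteq\Sigma_G(S)$. Letting $w$ range over $\Z_m$ yields $C(x,y)\subseteq\Sigma_G(S)$.

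The one point requiring care — the main (minor) obstacle — is making sure the second component absorbs an arbitrary multiplier while keeping the first component fixed. In the $\Gamma_\mu$ case this is immediate because the second argument of a $\mu$-generator is unconstrained; in the $\Pi_\mu$ case one must notice that $s^*z$ and $s^*(zw)$ are both ``$s^*$ times something in $\Z_m$,'' so the product form is preserved. This is exactly why containers are defined by $C(x,y) = \{\mu(x,yz):z\in\Z_m\}$ rather than, say, $\{\mu(x,yz): z\in S^*\}$: the $\mu$-product description built the freedom to scale $y$ right into the generating sets. I would also remark that, combined with Lemma~\ref{lemma:3.4}, this lemma shows $\Sigma_G(S)$ is a disjoint union of (possibly proper unions of) containers indexed by the first coordinates $x$ that occur — a structural observation the paper presumably exploits later.

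A slightly slicker alternative would be to prove the single identity: if $\mu(x,y)\in\Sigma_G(S)$ and $w\in\Z_m$, then $\mu(x,yw)\in\Sigma_G(S)$. Given this, $C(x,y) = \{\mu(x,yw): w\in\Z_m\}\subseteq\Sigma_G(S)$ follows at once, and the identity itself is verified by the two-case analysis above (noting $\mu(s,z)\mapsto\mu(s,zw)$ preserves membership in $\Gamma_\mu(S)$, and $\mu(ss^*,s^*z)\mapsto\mu(ss^*,s^*zw)$ preserves membership in $\Pi_\mu(S)$). I would likely present it this way to keep the argument compact.
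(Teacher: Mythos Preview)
Your proposal is correct and follows essentially the same route as the paper: split via Lemma~\ref{lemma:4.1} into the $\Gamma_\mu(S)$ and $\Pi_\mu(S)$ cases and observe that replacing the second argument $z$ by $zw$ preserves membership in each set. In fact your ``slicker alternative'' is exactly how the paper organizes the argument; the only difference is that you make explicit the injectivity of $(x,y)\mapsto\mu(x,y)$ (via evaluation at $a$ and $b$), which the paper uses tacitly when it writes ``then $x\in S$ and $y\in\Z_m$.''
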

\begin{proof}
$\left(  \Rightarrow  \right)$
Given any $z\in\Z_m$ we wish to show that if $\mu (x,y) \in {\Sigma _G}(S),$
then $\mu (x,yz) \in {\Sigma _G}(S).$
Suppose that $\mu (x,y) \in {\Gamma _\mu }(S);$ then $x \in S$ and $y\in\Z_m.$
Thus $yz\in\Z_m$ and clearly
$\mu (x,zy) \in {\Gamma _\mu }(S) \subseteq {\Sigma _G}(S).$
If $\mu (x,y) \in {\Pi _\mu }(S),$ then we know there are $s \in S,$
${s^*} \in {S^*},$ and $z'\in\Z_m$ so that $x = s{s^*}\pmod m$ and
$y = {s^*}z'\pmod m.$
Therefore, $yz = {s^*}z'z \pmod m.$
Thus
$\mu (x,yz) = \mu (s{s^*},{s^*}z'z)
  \in {\Pi _\mu }(S) \subseteq {\Sigma _G}(S).$
In each case, $\mu (x,yz) \in {\Sigma _G}(S).$

$\left(  \Leftarrow  \right)$
We know that $\mu (x,y) \in C(x,y).$
Thus, assuming $C(x,y) \subseteq {\Sigma _G}(S),$ it is immediate that
$\mu (x,y) \in {\Sigma _G}(S).$
\end{proof}

The following lemma shows that each $x$ in ${S^*}$ produces at least one
container in ${\Sigma _G}(S).$

\begin{lemma}
\label{lemma:4.4}
If $G = G(m,n,k)$ and $S$ is a base, then for each $x \in {S^*},$ there exists
$y \in {S^*}$ so that $C(x,y) \subseteq {\Sigma _G}(S).$
\end{lemma}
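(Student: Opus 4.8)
The plan is to show that for each $x \in S^*$ there is a $\mu$-product of generators whose first coordinate is $x$; any such product automatically lies in $\Sigma_G(S)$, and then Lemma~\ref{lemma:4.3} upgrades membership of a single mu-map to containment of the whole container. The key observation is that $S$ is a base, so $I(S) \ne \varnothing$: pick a fixed invertible $u \in I(S)$, and let $\bar u$ be its inverse in the group $I(S^*)$, which exists by the remark opening Section~\ref{sec:4}. Writing $x \in S^*$ as a product $x = s_1 s_2 \cdots s_t$ of elements of $S$, I would form the corresponding product of $\mu$-generators $\mu(s_1, z_1)\mu(s_2,z_2)\cdots\mu(s_t,z_t)$ for suitably chosen $z_i$; by repeated application of Lemma~\ref{lemma:3.3} this collapses to $\mu(s_1 s_2 \cdots s_t,\, z_1 s_2 \cdots s_t) = \mu(x,\, z_1 s_2\cdots s_t)$, which lies in $\Pi_\mu(S) \subseteq \Sigma_G(S)$ by Lemma~\ref{lemma:4.1}.

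The remaining point is to produce a \emph{second coordinate} $y$ that itself lies in $S^*$, not merely in $\Z_m$. Here I would exploit the freedom in the $z_i$ together with the invertible $u$. Concretely, in the product $\mu(x, z_1 s_2 \cdots s_t)$ the tail $s_2 \cdots s_t$ is a fixed element of $S^*$ (or $1$, when $t = 1$, which is the case $x \in S$ itself — this edge case should be handled first, taking $y = xu \cdot \bar u \cdot u$ or more simply noting $\mu(x, xu) \in \Gamma_\mu(S)$ has second coordinate $xu \in S^*$). For $t \ge 2$, choosing $z_1 = s_1 \in S$ gives second coordinate $s_1 s_2 \cdots s_t = x \in S^*$, so $C(x,x) \subseteq \Sigma_G(S)$ and we may take $y = x$. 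Thus in every case one exhibits a concrete $y \in S^*$ with $\mu(x,y) \in \Sigma_G(S)$, and Lemma~\ref{lemma:4.3} finishes the argument by giving $C(x,y) \subseteq \Sigma_G(S)$.

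I expect the only real subtlety to be bookkeeping around the degenerate cases: $x = 1$ (then $x = 1 \in I(S^*)$ but $1$ need not lie in $S$, so one must write $1$ as a genuine product of elements of $S$, e.g. $1 = u \bar u$ with $\bar u = u^{v-1}$ where $u^v = 1$, so that the product has length $\ge 2$ and the "choose $z_1 = s_1$" trick applies), and $x \in S$ with $S$ containing no nontrivial factorization of $x$ (handled directly via $\Gamma_\mu(S)$ as above). Everything else is a routine induction on the length of a product expression for $x$, using Lemma~\ref{lemma:3.3} as the inductive step. No delicate number theory is needed here — the hypotheses $(m,k-1) = 1$ and $n = \mathrm{ind}_m(k)$ enter only through the fact, already recorded, that $I(S^*)$ is a genuine subgroup of $\Z_m$ so that inverses of invertible generators are available inside $S^*$.
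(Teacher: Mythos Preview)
Your proposal is correct and follows essentially the same route as the paper: factor $x \in S^*$ as a product of elements of $S$, read off a $\mu$-map in $\Sigma_G(S)$ with first coordinate $x$ and second coordinate in $S^*$, then invoke Lemma~\ref{lemma:4.3}. The paper's execution is cleaner---it simply takes $y = 1 \in I(S^*) \subseteq S^*$ when $x \in S$, and $y = s_2 \cdots s_t$ when $x = s_1 s_2 \cdots s_t$ with $t > 1$---so your detour through the invertible $u$, the choice $z_1 = s_1$, and the separate $x = 1$ edge case are all unnecessary.
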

\begin{proof}
If $x \in {S^*},$ then $x \in S$ or $x$ is a product of elements of $S.$
If $x \in S$ then, selecting $y = 1,$ we obtain
$\mu (x,1) \in {\Gamma _\mu }(G) \subseteq {\Sigma _G}(S).$
Therefore, by Lemma \ref{lemma:4.3}, we have
$C(x,1) \subseteq {\Sigma _G}(S),$ as required.
If $x = {s_1}{s_2} \ldots {s_t}\;({s_i} \in S,t > 1),$ let
$x' = {s_2} \ldots {s_t}.$
Since $x' \in {S^*},$ we see that
$\mu ({s_1}x',x') \in {\Pi _\mu }(S) \subseteq {\Sigma _G}(S).$
Since $x = {s_1}x',$ if we select $y = x',$
Lemma \ref{lemma:4.3} implies that $C(x,y) \subseteq {\Sigma _G}(S).$
\end{proof}

We now introduce a set $Y(x)$ associated with each $x$ in ${S^*}.$
The following lemma characterizes exactly those $y$-values for which
$C(x,y) \subseteq {\Sigma _G}(S).$

\begin{lemma}
\label{lemma:4.5}
If $G = G(m,n,k),$ $S$ is a base, $x \in {S^*},$ and
\[
Y(x) =
\{
s^*z: s^*\in S^*, z\in\Z_m,
\exists s\in S \mbox{ so that } x = ss^*\pmod m
\},
\]
then $y \in Y(x)$ if and only if $C(x,y) \subseteq {\Sigma _G}(S).$
\end{lemma}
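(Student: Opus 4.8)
The plan is to play the definition of $Y(x)$ off against the structural description $\Sigma_G(S) = \Gamma_\mu(S) \cup \Pi_\mu(S)$ supplied by Lemma \ref{lemma:4.1}, using Lemma \ref{lemma:4.3} to pass freely between ``$\mu(x,y) \in \Sigma_G(S)$'' and ``$C(x,y) \subseteq \Sigma_G(S)$'', and Corollary \ref{cor:3.6} to recover the second coordinate of a $\mu$-map from an equality of maps.

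For the forward direction I would suppose $y \in Y(x)$; then by definition there are $s^* \in S^*$, $z \in \Z_m$, and $s \in S$ with $x = ss^* \pmod m$ and $y = s^*z \pmod m$, so $\mu(x,y) = \mu(ss^*, s^*z)$ is literally one of the $\mu$-products in $\Pi_\mu(S) \subseteq \Sigma_G(S)$. Lemma \ref{lemma:4.3} then promotes this to $C(x,y) \subseteq \Sigma_G(S)$.

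For the converse I would assume $C(x,y) \subseteq \Sigma_G(S)$. Since $\mu(x,y) \in C(x,y)$, Lemma \ref{lemma:4.1} places $\mu(x,y)$ in $\Gamma_\mu(S)$ or in $\Pi_\mu(S)$, and I would treat the two cases. If $\mu(x,y) = \mu(ss^*, s^*z) \in \Pi_\mu(S)$, then evaluating both maps at $a$ (where $k_0 = 0$) forces $x = ss^* \pmod m$, and Corollary \ref{cor:3.6} then gives $y = s^*z \pmod m$, exhibiting $y$ in exactly the form the definition of $Y(x)$ demands. If instead $\mu(x,y) = \mu(s,z) \in \Gamma_\mu(S)$ with $s \in S$ and $z \in \Z_m$, the same two evaluations give $x = s$ and $y = z$; here I would invoke $1 \in S^*$ — which holds because $S$ is a base, so $I(S) \neq \varnothing$, hence $I(S^*) \neq \varnothing$ and $1 \in I(S^*)$ — in order to write $x = s \cdot 1$ with $s \in S$ and $y = 1 \cdot y$, so that $y \in Y(x)$ via the choice $s^* = 1$.

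The whole argument is bookkeeping, and I do not expect a genuine obstacle; the only point needing a moment's care is the $\Gamma_\mu(S)$ case of the converse, where one must notice that a bare generator $\mu(s,z)$ already fits the $Y(x)$ template once the $S^*$-factor is allowed to be $1$. (Because $1 \in S^*$, one in fact has $\Gamma_\mu(S) \subseteq \Pi_\mu(S)$, so the case split could be avoided entirely, but keeping it makes the correspondence with Lemma \ref{lemma:4.1} transparent.)
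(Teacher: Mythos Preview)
Your argument is correct and follows the same overall route as the paper: use Lemma~\ref{lemma:4.1} and Lemma~\ref{lemma:4.3} to reduce both directions to membership of $\mu(x,y)$ in $\Gamma_\mu(S)\cup\Pi_\mu(S)$, and in the $\Gamma_\mu$ case of the converse use $1\in S^*$ to fit the template of $Y(x)$. The one difference is in the forward direction: the paper invokes Lemma~\ref{lemma:4.4} to obtain some $\mu(s^*,y')\in\Sigma_G(S)$ and then multiplies $\mu(s,z)\mu(s^*,y')=\mu(ss^*,s^*z)$, whereas you observe directly that $\mu(ss^*,s^*z)\in\Pi_\mu(S)$ by definition --- your shortcut is valid and slightly cleaner, and your added justification via evaluation at $a$ and Corollary~\ref{cor:3.6} in the converse makes explicit what the paper leaves implicit.
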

\begin{proof}
It will be convenient to suppress mention of the modulus $m.$

$\left(  \Rightarrow  \right)$
If $y \in Y(x)$ then, there exist ${s^*}\in {S^*},$ $z\in\Z_m,$ and
$s \in S$ with $x = s{s^*}$ and $y = {s^*}z.$
By Lemma \ref{lemma:4.4}, we know there exists $y' \in {S^*}$ such that
$\mu ({s^*},y') \in {\Sigma _G}(S).$
Also, we have $\mu (s,z) \in {\Gamma _\mu }(S) \subseteq {\Sigma _G}(S).$
Therefore, $\mu (s,z)\mu ({s^*},y') \in {\Sigma _G}(S).$
Note that $\mu (s,z)\mu ({s^*},y')$$ = \mu (s{s^*},{s^*}z) = \mu (x,y).$
And, since $\mu (x,y) \in {\Sigma _G}(S),$ Lemma \ref{lemma:4.3} implies that
$C(x,y) \subseteq {\Sigma _G}(S).$

$\left(  \Leftarrow  \right)$
Suppose now that $C(x,y) \subseteq {\Sigma _G}(S).$
By Lemma \ref{lemma:4.3}, we have $\mu (x,y) \in {\Sigma _G}(S).$
By Lemma \ref{lemma:4.1}, $\mu(x,y)\in {\Gamma_\mu }(S)$ or ${\Pi_\mu }(S).$
In the first case, we have $x \in S.$
Since $I({S^*})$ is a group, we know $1 \in I({S^*}) \subseteq {S^*};$
therefore, we let $s = x,$ ${s^*} = 1,$ and $z = y$ to obtain
$x = s{s^*} = x \cdot 1,$ with $y = {s^*}z = 1 \cdot y.$
Therefore, $y \in Y(x).$
In the second case, $\mu (x,y) \in {\Pi _\mu }(S).$
Thus we have $s \in S,{s^*} \in {S^*}$ and $z\in\Z_m,$ with $x = s{s^*}$
and $y = {s^*}z.$
It follows that $y \in Y(x).$
\end{proof}

Note that by Lemma \ref{lemma:4.4}, given any $x \in {S^*}$ there is a
$y \in {S^*}$ for which $C(x,y) \subseteq {\Sigma _G}(S);$
furthermore, Lemma \ref{lemma:4.5} determines exactly those values of $y$ for
which $C(x,y) \subseteq {\Sigma _G}(S).$
We will refer to these containers as a \emph{family}.

\begin{definition}
Suppose $G = G(m,n,k)$ and $S$ is a base.
For each $x \in {S^*},$ the $x$\emph{-family of containers (with respect to}
$G$ \emph{and} $S)$ is the set
${{\cal F}_G}(x,S) = \left\{ {C(x,y):y \in Y(x)} \right\}.$
We denote the union of the $x$-family by
$ \cup {{\cal F}_G}(x,S) = \bigcup\limits_{y \in Y(x)} {C(x,y)} .$
\end{definition}

\begin{theorem}
\label{thm:4.6}
If $G = G(m,n,k)$ and $S$ is a base, then
\[
\Sigma_G(S) = \dot{\bigcup}_{x\in S^*}\left(\cup {{\cal F}_G}(x,S)\right).
\]
\end{theorem}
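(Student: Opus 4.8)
The plan is to establish the theorem by proving two things: first, that $\Sigma_G(S)$ equals the union $\bigcup_{x\in S^*}\bigl(\cup\,\mathcal{F}_G(x,S)\bigr)$ as sets, and second, that this union is disjoint. For the set equality, the containment $\bigcup_{x\in S^*}\bigl(\cup\,\mathcal{F}_G(x,S)\bigr)\subseteq\Sigma_G(S)$ is essentially immediate from Lemma \ref{lemma:4.5}: if $x\in S^*$ and $y\in Y(x)$, then $C(x,y)\subseteq\Sigma_G(S)$, so each piece of the union lies inside $\Sigma_G(S)$. For the reverse containment, I would take an arbitrary $\mu(x,y)\in\Sigma_G(S)$ and use Lemma \ref{lemma:4.1} to split into the two cases $\mu(x,y)\in\Gamma_\mu(S)$ or $\mu(x,y)\in\Pi_\mu(S)$. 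In the first case $x\in S\subseteq S^*$, and I claim $y\in Y(x)$ (take $s=x$, $s^*=1$, $z=y$, using $1\in I(S^*)\subseteq S^*$), so $\mu(x,y)\in C(x,y)\subseteq\cup\,\mathcal{F}_G(x,S)$. In the second case $x=ss^*$ with $s\in S$, $s^*\in S^*$, $z\in\Z_m$ and $y=s^*z$, so $x\in S^*$ and $y\in Y(x)$ directly. In both cases $\mu(x,y)$ lies in the asserted union, which combined with $\mu(x,y)\in C(x,y)$ gives the reverse containment. (One should note here that this is really just repackaging the proof of Lemma \ref{lemma:4.5}, so the set-equality half is routine.)

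The disjointness of the union over distinct $x\in S^*$ is the substantive part. Suppose $x_1,x_2\in S^*$ with $x_1\neq x_2\pmod m$, and suppose for contradiction that $\bigl(\cup\,\mathcal{F}_G(x_1,S)\bigr)\cap\bigl(\cup\,\mathcal{F}_G(x_2,S)\bigr)\neq\varnothing$. Then there are $y_1\in Y(x_1)$, $y_2\in Y(x_2)$ with $C(x_1,y_1)\cap C(x_2,y_2)\neq\varnothing$. By Lemma \ref{lemma:3.4} this forces $x_1=x_2\pmod m$, contradicting our assumption. Hence the union is disjoint, and the dotted-union notation $\dot\bigcup$ is justified.

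Finally I would assemble the two halves: $\Sigma_G(S)=\bigcup_{x\in S^*}\bigl(\cup\,\mathcal{F}_G(x,S)\bigr)$ by the set-equality argument, and the union is disjoint by the Lemma \ref{lemma:3.4} argument, which together give exactly the displayed identity. The main obstacle is not really an obstacle at all once Lemma \ref{lemma:3.4} is in hand — the entire disjointness claim reduces to the observation that two containers with different first coordinates are disjoint, and the set-equality reduces to Lemmas \ref{lemma:4.1} and \ref{lemma:4.5}. If anything, the only point requiring a little care is making sure that in the $\Gamma_\mu(S)$ case we genuinely have $x\in S^*$ (true since $S\subseteq S^*$) and $y\in Y(x)$ (true via the $s^*=1$ choice, which is legitimate precisely because $S$ is a base, so $I(S^*)$ is nonempty and contains $1$). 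I expect the whole proof to be short, a matter of a few lines invoking the cited lemmas.
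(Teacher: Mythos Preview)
Your proposal is correct and follows essentially the same approach as the paper: disjointness via Lemma~\ref{lemma:3.4}, and set equality via Lemma~\ref{lemma:4.5}. The only cosmetic difference is that for the containment $\Sigma_G(S)\subseteq\bigcup_{x\in S^*}\bigl(\cup\,\mathcal{F}_G(x,S)\bigr)$ the paper invokes Lemmas~\ref{lemma:4.3} and~\ref{lemma:4.5} as black boxes, whereas you unpack the case split from Lemma~\ref{lemma:4.1} directly (which, as you note, amounts to re-deriving the relevant direction of Lemma~\ref{lemma:4.5}); either way the argument is short and the content is the same.
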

\begin{proof}
Note that the union is disjoint by Lemma \ref{lemma:3.4}.

$\left(  \subseteq  \right)$
If $\mu ({x_0},{y_0}) \in {\Sigma _G}(S),$ then, by Lemma \ref{lemma:4.3},
$C({x_0},{y_0}) \subseteq {\Sigma _G}(S).$
Thus, by Lemma \ref{lemma:4.5}, ${y_0} \in Y({x_0}).$
Therefore $C({x_0},{y_0}) \in {{\cal F}_G}({x_0},S),$ which implies that
$\mu ({x_0},{y_0}) \in C({x_0},{y_0}) \subseteq  \cup {{\cal F}_G}({x_0},S).$
Therefore
\[
\mu(x_0, y_0) \in \dot{\bigcup}_{x\in S^*}\left(\cup {{\cal F}_G}(x,S)\right).
\]
$\left(  \supseteq  \right)$
If
\[
\mu(x_0, y_0) \in \dot{\bigcup}_{x\in S^*}\left(\cup {{\cal F}_G}(x,S)\right),
\]
it follows, by Lemma \ref{lemma:3.4}, that
\[
\mu ({x_0},{y_0}) \in  \cup {{\cal F}_G}({x_0},S)
= \bigcup\limits_{y \in Y({x_0})} {C({x_0},y)} .
\]
Therefore ${y_0} \in Y({x_0})$ and thus, by Lemma \ref{lemma:4.5},
$C({x_0},{y_0}) \subseteq {\Sigma _G}(S).$
By Lemma \ref{lemma:4.3}, $\mu ({x_0},{y_0}) \in {\Sigma _G}(S).$
\end{proof}

Theorem \ref{thm:4.6} states that ${\Sigma _G}(S)$ is the disjoint union of
all the $x$-families.
Since distinct families are disjoint, the complexity involved in representing
${\Sigma _G}(S)$ as a union of containers occurs entirely within each
$x$-family.
In this section, we will determine conditions that assure a minimal amount of
complexity, so that this union is easily determined.
In Section \ref{sec:5}, we will study the more involved situation.

\begin{definition}
If $G = G(m,n,k)$ and $S$ is a base, for each $x \in {S^*},$ we say the
$x$-family ${\cal F}(x,S)$ is \emph{complete} if $C(x,1) \in {\cal F}(x,S).$
The $G$-semigroup ${\Sigma _G}(S)$ is \emph{complete} if each
$x$-family is complete .
\end{definition}

Not all $x$-families are complete.
In Section \ref{sec:5}, Example 5.1 will show, for $G = G(63,6,2),$
that ${\cal F}(21,\left\{ {0,1,3,7,15,31} \right\})$ is not complete.

\begin{lemma}
\label{lemma:4.7}
For $x \in {S^*},$ if ${\cal F}(x,S)$ is complete, then
$ \cup {\cal F}(x,S) = C(x,1).$
\end{lemma}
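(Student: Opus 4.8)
The plan is to prove the two inclusions separately, both of which are essentially immediate from results already established. Write $\cup\,\mathcal{F}(x,S) = \bigcup_{y\in Y(x)} C(x,y)$ by definition.

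For the inclusion $C(x,1) \subseteq \cup\,\mathcal{F}(x,S)$: the hypothesis that $\mathcal{F}(x,S)$ is complete means precisely that $C(x,1) \in \mathcal{F}(x,S)$, i.e. that $1 \in Y(x)$. So $C(x,1)$ is one of the containers appearing in the union defining $\cup\,\mathcal{F}(x,S)$, and the inclusion follows at once.

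For the reverse inclusion $\cup\,\mathcal{F}(x,S) \subseteq C(x,1)$: it suffices to show $C(x,y) \subseteq C(x,1)$ for every $y \in Y(x)$. But this holds for \emph{all} $y\in\Z_m$, not just those in $Y(x)$, by Lemma \ref{lemma:3.8}(ii). Taking the union over $y \in Y(x)$ then gives the inclusion.

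Combining the two inclusions yields $\cup\,\mathcal{F}(x,S) = C(x,1)$. There is no real obstacle here; the only point to be careful about is simply to invoke the definition of completeness correctly (it guarantees membership of $C(x,1)$ in the family, hence the ``hard'' inclusion) and to cite Lemma \ref{lemma:3.8}(ii) for the containment of every container of the family in $C(x,1)$.
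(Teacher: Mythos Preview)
Your proof is correct and follows essentially the same approach as the paper's: one inclusion comes from Lemma~\ref{lemma:3.8}(ii), and the other from the definition of completeness placing $C(x,1)$ in the family. The only cosmetic difference is the order in which you present the two inclusions.
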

\begin{proof}
$\left(  \subseteq  \right)$
By Lemma \ref{lemma:3.8}(ii), we have $C(x,y) \subseteq C(x,1)$ for each
$y\in\Z_m.$
Therefore,
\[
\cup {{\cal F}_G}(x,S) = \bigcup\limits_{y\in Y(x)} {C(x,y)} \subseteq C(x,1).
\]

$\left(  \supseteq  \right)$
Since ${\cal F}(x,S)$ is complete, we know that $C(x,1) \in {\cal F}(x,S);$
therefore, $C(x,1) \subseteq  \cup {\cal F}(x,S).$
\end{proof}

\begin{theorem}
\label{thm:4.8}
If $G = G(m,n,k),$ $S$ is a base, and ${\Sigma _G}(S)$ is complete, then
$\displaystyle\Sigma_G(S) = \dot{\bigcup}_{x\in S^*} C(x,1)$
and $\left| {{\Sigma _G}(S)} \right| = m\left| {{S^*}} \right|.$
\end{theorem}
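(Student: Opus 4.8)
The plan is to obtain both assertions essentially for free from Theorem \ref{thm:4.6}, once completeness is used to collapse each family to a single container. First I would invoke Theorem \ref{thm:4.6}, which already gives
\[
\Sigma_G(S) = \dot{\bigcup}_{x\in S^*}\left(\cup {{\cal F}_G}(x,S)\right),
\]
the union being disjoint by Lemma \ref{lemma:3.4}. Since $\Sigma_G(S)$ is complete, every $x$-family ${\cal F}(x,S)$ with $x\in S^*$ is complete, so Lemma \ref{lemma:4.7} applies and yields $\cup {\cal F}(x,S) = C(x,1)$ for each such $x.$ Substituting this into the displayed equation gives
\[
\Sigma_G(S) = \dot{\bigcup}_{x\in S^*} C(x,1),
\]
which is the first claim. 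Disjointness is preserved: distinct $x_1,x_2\in S^*\subseteq\Z_m$ are incongruent modulo $m,$ so $C(x_1,1)\cap C(x_2,1) = \varnothing$ by Lemma \ref{lemma:3.4}.

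For the order I would use this disjoint decomposition to write $|\Sigma_G(S)| = \sum_{x\in S^*} |C(x,1)|.$ By Corollary \ref{cor:3.7}, $|C(x,1)| = \frac{m}{(m,1)} = m$ for every $x,$ so the sum equals $m|S^*|.$

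I do not expect a genuine obstacle here: the substantive content — that the $y$-values with $C(x,y)\subseteq\Sigma_G(S)$ are exactly those in the $x$-family (Lemma \ref{lemma:4.5}), that completeness forces the union of that family to be $C(x,1)$ (Lemma \ref{lemma:4.7}), and that $C(x,1)$ has full size $m$ (Corollary \ref{cor:3.7}) — has already been established. The only thing to watch is the bookkeeping in the cardinality count: one must be certain the index set of the disjoint union consists of pairwise incongruent residues so that it contributes exactly $|S^*|$ blocks of size $m,$ which is immediate from $S^*\subseteq\Z_m.$
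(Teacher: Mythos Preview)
Your proposal is correct and follows essentially the same approach as the paper: invoke Theorem \ref{thm:4.6}, apply Lemma \ref{lemma:4.7} under completeness to replace each $\cup{\cal F}_G(x,S)$ by $C(x,1),$ and then use Corollary \ref{cor:3.7} to compute the order. Your added remarks on disjointness and the size of the index set are fine but not needed beyond what Theorem \ref{thm:4.6} already guarantees.
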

\begin{proof}
By Theorem \ref{thm:4.6},
\[
\Sigma_G(S) = \dot{\bigcup}_{x\in S^*}\left(\cup {{\cal F}_G}(x,S)\right).
\]
Since each $x$-family is complete, Lemma \ref{lemma:4.7} implies that
\[
\Sigma_G(S) = \dot{\bigcup}_{x\in S^*} C(x,1).
\]
By Corollary \ref{cor:3.7},
$\left| {C(x,1)} \right| = \frac{m}{{(m,1)}} = m.$
Therefore $\left| {{\Sigma _G}(S)} \right| = m\left| {{S^*}} \right|.$
\end{proof}

Thus, if ${\Sigma _G}(S)$ is complete, we have the simplest situation.
${\Sigma _G}(S)$ is a disjoint union of maximal containers and its order is
easily calculated.
At this point we turn our attention to incomplete $x$-families.

\begin{definition}
If $G = G(m,n,k),$ $S$ is a base, and $x \in {S^*},$ then \emph{the orbit of}
$x$ in ${S^*}$ is the set
$orb(x,{S^*}) = \left\{ {xy:y \in I({S^*})} \right\}.$
\end{definition}

Since $S$ is a base, there are invertibles in ${S^*}.$
As noted earlier, $I({S^*})$ forms a group, thus $1 \in I({S^*})$ and it
follows that $x \in orb(x,{S^*}).$
If $G$ had non-trivial centre, there will be no invertibles with which to
create an orbit and a different approach will be required.

\begin{lemma}
\label{lemma:4.9}
If $G = G(m,n,k)$ and $S$ is a base, then, for each ${x_1},{x_2} \in {S^*},$
either $orb({x_1},{S^*}) = orb({x_2},{S^*})$ or
$orb({x_1},{S^*}) \cap orb({x_2},{S^*}) = \varnothing .$
\end{lemma}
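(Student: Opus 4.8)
The plan is to recognize this as the familiar ``orbits form a partition'' fact, with $I(S^*)$ playing the role of the acting group. Concretely, I would introduce on $S^*$ the relation $x_1 \sim x_2$ iff $x_1 = x_2 y \pmod m$ for some $y \in I(S^*)$, and observe that $orb(x,S^*) = \{xy : y\in I(S^*)\}$ is exactly the $\sim$-class of $x$ (each $xy$ is $\sim$-related to $x$, and conversely any $x'\sim x$ has the form $xy$ with $y\in I(S^*)$). That $\sim$ is an equivalence relation is where the structure of $I(S^*)$ enters: reflexivity because $1\in I(S^*)$ (recalled just before the definition of a base); symmetry because $I(S^*)$ is a subgroup of $\Z_m$, so $y\in I(S^*)$ forces $y^{-1}\in I(S^*)$; and transitivity because $I(S^*)$ is closed under multiplication. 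The dichotomy then follows since equivalence classes are either equal or disjoint.

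For the written proof I would probably avoid the abstract detour and argue directly. Suppose $orb(x_1,S^*)\cap orb(x_2,S^*)\neq\varnothing$ and pick $z$ in the intersection, so $z = x_1y_1 = x_2y_2 \pmod m$ with $y_1,y_2\in I(S^*)$. Since $I(S^*)$ is a group, $y_1^{-1}\in I(S^*)$, hence $x_1 = x_2y_2y_1^{-1}\pmod m$ with $y_2y_1^{-1}\in I(S^*)$. Then for every $w\in I(S^*)$ we have $x_1w = x_2(y_2y_1^{-1}w)\in orb(x_2,S^*)$, so $orb(x_1,S^*)\subseteq orb(x_2,S^*)$; interchanging the roles of $x_1$ and $x_2$ gives the reverse inclusion, and hence $orb(x_1,S^*) = orb(x_2,S^*)$.

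There is essentially no obstacle: all the content is routine once one notes that $I(S^*)$ is a genuine group, so that inverses of invertible elements stay in $I(S^*)$. The only thing meriting a word of care is precisely that point, and it is exactly why the hypothesis that $S$ is a base (equivalently, in the cases of interest, that $G$ has trivial centre) was imposed — as the remark immediately after the definition of orbit already flags. All multiplications are taken modulo $m$, and nothing about the presentation of $G$ beyond closure of $S^*$ and the group structure of $I(S^*)$ is used.
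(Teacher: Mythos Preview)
Your proposal is correct and, in its direct form, essentially identical to the paper's proof: assume a common element $z=x_1y_1=x_2y_2$ with $y_1,y_2\in I(S^*)$, use that $I(S^*)$ is a group to write $x_1=x_2y_2y_1^{-1}$, and conclude $orb(x_1,S^*)\subseteq orb(x_2,S^*)$ with the reverse inclusion by symmetry. The additional equivalence-relation framing you give is a fine conceptual gloss but adds nothing beyond what the paper does.
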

\begin{proof}
Suppose that $orb({x_1},{S^*}) \cap orb({x_2},{S^*}) \ne \varnothing $ and
that $z \in orb({x_1},{S^*}) \cap orb({x_2},{S^*}).$
Thus there are ${y_1},{y_2} \in I({S^*})$ so that
${x_1}{y_1} = z = {x_2}{y_2}.$
It follows that ${x_1} = {x_2}{y_2}y_1^{ - 1}.$
An arbitrary element of $orb({x_1},{S^*})$ is of the form
${x_1}u\;(u \in I({S^*}),$ thus
${x_1}u = {x_2}({y_2}y_1^{ - 1}u) \in orb({x_2},{S^*}).$
It follows that $orb({x_1},{S^*}) \subseteq orb({x_2},{S^*}).$
The other containment is shown similarly and our result follows.
\end{proof}

The fact that ${S^*}$ is the union of its orbits together with
Lemma \ref{lemma:4.9} imply that the orbits of ${S^*}$ partition it into
equivalence classes with respect to the relation  defined as $x \sim y$ if and
only if there exists a $z \in I({S^*})$ for which $x = yz.$
In fact $\sim$ is a congruence; thus the quotient semigroup $S/\sim$ can be
formed.
The number of distinct orbits in ${S^*}$ is the order of the quotient
semigroup.
We will next show how these orbits are involved in the search for the
containers within ${\Sigma _G}(S).$

\begin{definition}
If $G = G(m,n,k),$ $S$ is a base, and $x \in {S^*},$ the orbit,
$orb(x,{S^*})$ is called \emph{basic} if $orb(x,{S^*})\cap S\ne\varnothing.$
\end{definition}

\begin{theorem}
\label{thm:4.10}
If $G = G(m,n,k),$ $S$ a base, and $x \in {S^*},$ then $orb(x,{S^*})$ is basic
if and only if ${\cal F}(x,S)$ is complete.
\end{theorem}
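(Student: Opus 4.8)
The whole statement is an unwinding of the definitions of \emph{basic orbit} and of \emph{completeness}, mediated by the set $Y(x)$ from Lemma \ref{lemma:4.5}. The first step is to reduce ``${\cal F}(x,S)$ is complete'' to the cleaner condition ``$1\in Y(x)$''. Indeed, ${\cal F}(x,S)$ is complete means $C(x,1)\in{\cal F}(x,S)=\{C(x,y):y\in Y(x)\}$, i.e. there is $y\in Y(x)$ with $C(x,y)=C(x,1)$; by Lemma \ref{lemma:3.8}(iv) this forces $y\in I(\Z_m)$. Since $Y(x)=\{s^\ast z:\dots\}$ is visibly closed under multiplication by elements of $\Z_m$, from such an invertible $y\in Y(x)$ we get $1=y\cdot y^{-1}\in Y(x)$. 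Conversely $1\in Y(x)$ trivially gives $C(x,1)\in{\cal F}(x,S)$. So completeness of ${\cal F}(x,S)$ is equivalent to $1\in Y(x)$, and it now suffices to prove that $orb(x,S^\ast)$ is basic if and only if $1\in Y(x)$.

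\textbf{($\Rightarrow$).} Suppose $orb(x,S^\ast)\cap S\neq\varnothing$, say $s\in S$ with $s=xu$ for some $u\in I(S^\ast)$. Since $I(S^\ast)$ is a group, $u^{-1}\in I(S^\ast)\subseteq S^\ast$, and $x=su^{-1}$. Taking $s^\ast=u^{-1}\in S^\ast$ and $z=u\in\Z_m$, we have $x=ss^\ast\pmod m$ with $s\in S$, and $s^\ast z=u^{-1}u=1$. Hence $1=s^\ast z\in Y(x)$, so ${\cal F}(x,S)$ is complete.

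\textbf{($\Leftarrow$).} Suppose $1\in Y(x)$, so there are $s\in S$, $s^\ast\in S^\ast$, and $z\in\Z_m$ with $x=ss^\ast\pmod m$ and $s^\ast z=1\pmod m$. The equation $s^\ast z=1\pmod m$ shows $s^\ast$ is coprime to $m$, hence invertible in $\Z_m$; being an element of $S^\ast$, it lies in $I(S^\ast)$. As $I(S^\ast)$ is a group, $(s^\ast)^{-1}\in I(S^\ast)$, and $s=x(s^\ast)^{-1}\in orb(x,S^\ast)$. Thus $s\in orb(x,S^\ast)\cap S$ and the orbit is basic.

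\textbf{Main obstacle.} There is no deep obstacle; the only point requiring care is the bookkeeping in the first step, reconciling the definition of ``complete'' (phrased via $C(x,1)\in{\cal F}(x,S)$) with the working condition $1\in Y(x)$ — in particular invoking Lemma \ref{lemma:3.8}(iv) and the stability of $Y(x)$ under scaling. A secondary subtlety, used in the ($\Leftarrow$) direction, is that an element $s^\ast\in S^\ast$ which is invertible in $\Z_m$ is automatically invertible \emph{inside} $S^\ast$ (i.e.\ its $\Z_m$-inverse lies in $S^\ast$), which follows from $I(S^\ast)$ being a subgroup of $\Z_m$ as recalled at the start of Section \ref{sec:4}. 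Once these are noted, both implications are a two-line manipulation.
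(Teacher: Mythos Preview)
Your proof is correct and follows essentially the same line as the paper's. The only cosmetic difference is packaging: you first reduce ``$\mathcal{F}(x,S)$ complete'' to ``$1\in Y(x)$'' (using Lemma~\ref{lemma:3.8}(iv) and the $\Z_m$-stability of $Y(x)$) and then argue directly from the definition of $Y(x)$, whereas the paper works with $\mu(x,1)\in\Sigma_G(S)=\Gamma_\mu(S)\cup\Pi_\mu(S)$ via Lemmas~\ref{lemma:4.1} and~\ref{lemma:4.3} and splits into the two cases $\mu(x,1)\in\Gamma_\mu(S)$ (giving $x\in S$ directly) and $\mu(x,1)\in\Pi_\mu(S)$ (giving the factorization $x=ss^\ast$, $1=s^\ast z$). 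Your route absorbs the $\Gamma_\mu$ case into the $Y(x)$ formalism (take $s^\ast=1$), so it avoids the case split; the paper's route is more concrete but otherwise identical in content.
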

\begin{proof}
$\left(  \Rightarrow  \right)$
Since $orb(x,{S^*})$ is basic, we know there is an $s \in S$ and an invertible
$y \in I({S^*})$ for which $xy = s.$
Thus, representing the inverse of $y \pmod m$ as ${y^{ - 1}},$ we have
$x = s{y^{ - 1}}.$
Since $I({S^*})$ forms a group, ${y^{ - 1}} \in I({S^*}) \subseteq {S^*};$
therefore,
\[
\mu (x,{y^{ - 1}})
= \mu (s{y^{ - 1}},{y^{ - 1}})
\in {\Pi _\mu }(S) \subseteq {\Sigma _G}(S).
\]
Thus $C(x,{y^{ - 1}}) \subseteq {\Sigma _G}(S),$ by Lemma \ref{lemma:4.3}.
Since ${y^{ - 1}}$ is invertible, Lemma \ref{lemma:3.8}(iv) implies that
$C(x,{y^{ - 1}}) = C(x,1);$
therefore, $C(x,1) \subseteq {\Sigma _G}(S)$ and ${\cal F}(x,S)$ is complete.

$\left(  \Leftarrow  \right)$
If ${\cal F}(x,S)$ is complete, then $C(x,1) \subseteq {\Sigma _G}(S).$
Thus $\mu (x,1) \in {\Sigma _G}(S),$ by Lemma \ref{lemma:4.3}.
If $\mu (x,1) \in {\Gamma _\mu }(S),$ then $x \in S$ and, since
$x \in orb(x,{S^*}) \cap S,$$orb(x,{S^*})$ is basic.
If $\mu (x,1) \in {\Pi _\mu }(S),$ then $x = s{s^*},1 = {s^*}z$ and, since
${s^*}z = 1,$ ${s^*}$ is invertible.
Therefore, $x{({s^*})^{ - 1}} \in orb(x,{S^*}),$ and
$x{({s^*})^{ - 1}} = s{s^*}{({s^*})^{ - 1}} = s \in S.$
Thus $x{({s^*})^{ - 1}} \in orb(x,{S^*}) \cap S,$ and it follows that
$orb(x,{S^*})$ is basic.
\end{proof}

\begin{theorem}
\label{thm:4.11}
If $G = G(m,n,k)$ and $S$ is a base, then $orb(x,{S^*})$ is basic, for each
$x \in {S^*},$ if and only if  ${\Sigma _G}(S)$ is complete.
In this case we have
$\displaystyle\Sigma_G(S) = \dot{\bigcup}_{x\in S^*} C(x,1)$
and $\left| {{\Sigma _G}(S)} \right| = m\left| {{S^*}} \right|.$
\end{theorem}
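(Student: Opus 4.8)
The statement is essentially the conjunction of Theorem~\ref{thm:4.10} (applied to every $x\in S^*$) and Theorem~\ref{thm:4.8}. So the plan is to prove the biconditional by quantifying Theorem~\ref{thm:4.10} over $x$, and then invoke Theorem~\ref{thm:4.8} for the "in this case" clause.

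\textbf{Forward direction.} Assume $orb(x,S^*)$ is basic for every $x\in S^*$. Fix an arbitrary $x\in S^*$. By Theorem~\ref{thm:4.10}, since $orb(x,S^*)$ is basic, the family ${\cal F}(x,S)$ is complete. As $x$ was arbitrary, every $x$-family is complete, which is precisely the definition of ${\Sigma_G}(S)$ being complete.

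\textbf{Reverse direction.} Assume ${\Sigma_G}(S)$ is complete; by definition every $x$-family ${\cal F}(x,S)$ is complete. Fix $x\in S^*$. By the other implication of Theorem~\ref{thm:4.10}, completeness of ${\cal F}(x,S)$ gives that $orb(x,S^*)$ is basic. Since $x$ was arbitrary, $orb(x,S^*)$ is basic for every $x\in S^*$.

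\textbf{The ``in this case'' clause.} Once ${\Sigma_G}(S)$ is known to be complete, Theorem~\ref{thm:4.8} applies verbatim and yields both $\Sigma_G(S) = \dot{\bigcup}_{x\in S^*} C(x,1)$ and $\left|{\Sigma_G}(S)\right| = m\left|{S^*}\right|$.

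\textbf{Main obstacle.} There is essentially no obstacle here: all the real work was done in Theorems~\ref{thm:4.10} and~\ref{thm:4.8}. The only thing to be careful about is the logical bookkeeping — making sure the universal quantifier over $x\in S^*$ is handled cleanly on both sides of the biconditional (i.e.\ that "every orbit basic" $\Leftrightarrow$ "every family complete" follows from the pointwise equivalence), and that the disjointness of the union $\dot{\bigcup}_{x\in S^*} C(x,1)$ is still justified by Lemma~\ref{lemma:3.4} exactly as in Theorem~\ref{thm:4.8}. So the proof is a short two-line citation of the two prior theorems together with the definition of completeness.
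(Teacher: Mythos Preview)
Your proposal is correct and follows essentially the same approach as the paper's own proof: apply Theorem~\ref{thm:4.10} pointwise over all $x\in S^*$ to get the biconditional via the definition of completeness, then invoke Theorem~\ref{thm:4.8} for the second sentence. The paper's proof is just a more compressed version of exactly what you wrote.
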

\begin{proof}
By Theorem \ref{thm:4.10}, each orbit is basic if and only if each
$x$-family ${\cal F}(x,S)$ is complete.
This is the case if and only if ${\Sigma _G}(S)$ is complete.
The second sentence follows by Theorem \ref{thm:4.8}.
\end{proof}

\begin{corollary}
\label{cor:4.12}
If $G = G(m,n,k),$ $R = \{k_j: j\in\Z_n\}$ and $L = \{-k_j: j\in\Z_n\},$ then
\begin{enumerate}[(i)]
\item
If, for each $x \in {R^*},$ $orb(x,R)$ is basic, then
$\displaystyle\Sigma_G(S) = \dot{\bigcup}_{x\in R^*} C(x,1)$
and $\left| {{\rm P}(G)} \right| = \left| {{R^*}} \right|m,$ and
\item
If, for each $x \in {L^*},$ $orb(x,L)$ is basic, then
$\displaystyle\Sigma_G(S) = \dot{\bigcup}_{x\in L^*} C(x,1)$
and $\left| {\Lambda (G)} \right| = \left| {{L^*}} \right|m.$
\end{enumerate}
\end{corollary}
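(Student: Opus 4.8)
The plan is to derive both parts as immediate specializations of Theorem~\ref{thm:4.11}, using Lemma~\ref{lemma:4.2} to translate statements about $\Sigma_G(S)$ into statements about the commutation semigroups. First I would invoke Lemma~\ref{lemma:4.2}: because $G = G(m,n,k)$ has trivial centre, the sets $R = \{k_j : j \in \Z_n\}$ and $L = \{-k_j : j \in \Z_n\}$ are bases, and $\mathrm{P}(G) = \Sigma_G(R)$ while $\Lambda(G) = \Sigma_G(L)$. Consequently every result established for $\Sigma_G(S)$ with $S$ an arbitrary base holds with $S = R$ for $\mathrm{P}(G)$ and with $S = L$ for $\Lambda(G)$.

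For part (i) I would set $S = R$ in Theorem~\ref{thm:4.11}. The hypothesis of (i) --- that $orb(x,R)$ (which abbreviates $orb(x,R^*)$) is basic for every $x \in R^*$ --- is precisely the hypothesis of that theorem, which therefore yields $\Sigma_G(R) = \dot{\bigcup}_{x \in R^*} C(x,1)$ and $|\Sigma_G(R)| = m|R^*|$. Replacing $\Sigma_G(R)$ by $\mathrm{P}(G)$ via Lemma~\ref{lemma:4.2} gives the asserted decomposition of $\mathrm{P}(G)$ into a disjoint union of maximal containers and the order formula $|\mathrm{P}(G)| = |R^*|m$. Part (ii) is proved verbatim with $R$ replaced by $L$ throughout and $\mathrm{P}(G)$ by $\Lambda(G)$. (The symbol $\Sigma_G(S)$ appearing in the displayed equations of the statement is a typographical slip for $\mathrm{P}(G)$ and $\Lambda(G)$, respectively.)

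Since the corollary is just Theorem~\ref{thm:4.11} read off in the two cases of interest, I do not expect a genuine obstacle. The only step that relies on the standing assumption is the appeal to Lemma~\ref{lemma:4.2}, which in turn rests on Lemma~\ref{lemma:2.4} to guarantee $I(R) \neq \varnothing$ and $I(L) \neq \varnothing$ --- that is, that $R$ and $L$ qualify as bases at all; without triviality of the centre neither $\Sigma_G(R)$ nor $\Sigma_G(L)$ is even defined, so that is where the hypothesis does its essential work. Everything else is bookkeeping already carried out in Theorems~\ref{thm:4.8}, \ref{thm:4.10}, and \ref{thm:4.11} and in Corollary~\ref{cor:3.7}.
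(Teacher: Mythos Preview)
Your proposal is correct and follows exactly the paper's approach: invoke Lemma~\ref{lemma:4.2} to identify $\mathrm{P}(G)=\Sigma_G(R)$ and $\Lambda(G)=\Sigma_G(L)$, then apply Theorem~\ref{thm:4.11} to each. Your observation about the typographical slip is also apt.
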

\begin{proof}
By Lemma \ref{lemma:4.2}, we know that ${\rm P}(G) = {\Sigma _G}(R)$ and $
\Lambda (G) = {\Sigma _G}(L).$
The result then follows immediately from Theorem \ref{thm:4.11}.
\end{proof}

If the orbit of $x$ is basic, then $ \cup {\cal F}(x,S) = C(x,1).$
However if the orbit of $x$ is non-basic, the containers in ${\cal F}(x,S)$
have a more complex interrelationship.
We now give examples to illustrate that, for $G(m,n,k)$ with trivial centre,
it is possible for each orbit to be basic in ${R^*}$ but not in ${L^*}$
and vice versa.
Thus the completeness of ${\rm P}(G)$ and $\Lambda (G)$ are independent.

\begin{example}
We leave the modular arithmetic calculations to the reader.
Note that $G(315,12,272)$ has trivial centre and that $orb(x,{R^*})$ is basic
for each $x \in {R^*},$ but $orb(225,{L^*})$ is not basic.
Also $G(135,12,62)$ has trivial centre  and $orb(x,{L^*})$ is basic for each
$x \in {L^*},$ but $orb(130,{R^*})$ is not basic.
In each case there is just one orbit which is not basic though, in general,
this is not the case.
A computer search shows that $63$ is the smallest value of $m$ for which
non-basic orbits exist in metacyclic groups with trivial centre for
${\rm P}(G)$ or $\Lambda (G).$
\end{example}

We will apply the following Lemma and Corollary to narrow the search for
non-basic orbits.

\begin{lemma}
\label{lemma:4.14}
If $G = G(m,n,k),$ $S$ is a base,  and $x \in I({S^*}),$ then $orb(x,{S^*})$
is basic.
\end{lemma}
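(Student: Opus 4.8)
The plan is to show that an invertible element of $S^*$ always lies in its own orbit, and that this orbit meets $S$. First I would observe that since $x \in I(S^*)$, the element $x$ is invertible in $\Z_m$, so in particular $1 = x x^{-1}$ with $x^{-1} \in I(S^*)$ (this uses the remark at the start of Section~\ref{sec:4} that $I(S^*)$ is a finite group, hence closed under inverses). Thus $x \in orb(x, S^*)$ trivially, since $x = x \cdot 1$ with $1 \in I(S^*)$. The real content is to produce an element of $S$ inside $orb(x,S^*)$.

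The key step is this: because $S$ is a base, $S$ contains some invertible element $s \in I(S) \subseteq I(S^*)$. I claim $s \in orb(x, S^*)$. Indeed, $s = x \cdot (x^{-1} s)$, and $x^{-1} s$ is a product of two invertibles of $S^*$, hence $x^{-1} s \in I(S^*)$. Therefore $s \in \{ xy : y \in I(S^*) \} = orb(x, S^*)$, and since $s \in S$ as well, we get $orb(x, S^*) \cap S \neq \varnothing$, i.e. $orb(x, S^*)$ is basic.

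In short, the proof is essentially a one-line orbit computation once one remembers two facts: $I(S^*)$ is a group (so inverses stay inside), and a base by definition contains an invertible element of $S$. I do not anticipate any real obstacle here; the only thing to be careful about is keeping the distinction between $I(S)$ and $I(S^*)$ straight, and noting the containment $I(S) \subseteq I(S^*)$ which holds because $S \subseteq S^*$ and invertibility is inherited. If one wanted, one could also phrase this as: every invertible orbit contains every invertible of $S^*$, in particular the invertibles of $S$, so it is basic. This lemma then feeds into narrowing the search for non-basic orbits, since it shows non-basic orbits can only occur among the non-invertible elements of $S^*$.
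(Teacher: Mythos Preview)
Your proof is correct and follows essentially the same approach as the paper: pick an invertible $s \in I(S)$ (which exists because $S$ is a base), write $s = x(x^{-1}s)$ with $x^{-1}s \in I(S^*)$, and conclude $s \in orb(x,S^*) \cap S$. The paper's argument is identical up to notation.
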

\begin{proof}
Since $x \in I({S^*}),$ it is invertible.
Let us call the inverse ${x^{ - 1}}$ and note that, by Lemma \ref{lemma:4.5},
${x^{ - 1}} \in I({S^*}).$
Since $S$ is a base, we know that there is some $y \in I(S).$
Then
\[
y = 1\left( y \right) = (x{x^{ - 1}})y = x({x^{ - 1}}y) \in orb(x,{S^*}).
\]
Thus $y \in orb(x,{S^*}) \cap S$ and it follows that $orb(x,{S^*})$ is basic.
\end{proof}

\begin{corollary}
\label{cor:4.15}
Let $G = G(m,n,k)$ and let $S$ be a base.
If, for each $x \in N({S^*}) - N(S),$ $orb(x,S)$ is basic, then all orbits
are basic.
\end{corollary}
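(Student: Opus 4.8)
The plan is to reduce the claim to Theorem \ref{thm:4.11} by showing that the hypothesis forces every orbit in ${S^*}$ to be basic. By Theorem \ref{thm:4.11}, it suffices to prove that $orb(x,{S^*})$ is basic for each $x \in {S^*}$. So I would fix an arbitrary $x \in {S^*}$ and split into the two cases $x \in I({S^*})$ and $x \in N({S^*})$, the latter being where the real work lies.

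First, if $x \in I({S^*})$, then Lemma \ref{lemma:4.14} immediately gives that $orb(x,{S^*})$ is basic, with no further argument needed. So from here on I would assume $x \in N({S^*})$. Now $N({S^*})$ splits as the disjoint union of $N(S)$ and $N({S^*}) - N(S)$. If $x \in N({S^*}) - N(S)$, then $orb(x,{S^*})$ is basic by hypothesis. The remaining case is $x \in N(S)$: here $x \in S$, and since $x \in orb(x,{S^*})$ (because $1 \in I({S^*})$, as noted after the definition of orbit), we get $x \in orb(x,{S^*}) \cap S \ne \varnothing$, so $orb(x,{S^*})$ is basic directly from the definition. In all cases $orb(x,{S^*})$ is basic, and Theorem \ref{thm:4.11} finishes the proof.

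I expect the only genuine subtlety is the bookkeeping of the set decomposition: one must verify that ${S^*}$ is covered by $I({S^*}) \cup N(S) \cup \bigl(N({S^*}) - N(S)\bigr)$, which is clear since ${S^*} = I({S^*}) \,\dot\cup\, N({S^*})$ and $N({S^*}) = N(S) \cup \bigl(N({S^*}) - N(S)\bigr)$ — the point being that every non-invertible element of $S$ is in particular a non-invertible element of ${S^*}$, so $N(S) \subseteq N({S^*})$. There is no hard analytic or number-theoretic obstacle here; the corollary is essentially a clean repackaging of Lemma \ref{lemma:4.14} and Theorem \ref{thm:4.11} that isolates the ``hard'' non-invertible, non-generating elements as the only ones whose orbits need to be checked in applications. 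The one thing to be careful about is not to conflate the hypothesis's use of $orb(x,S)$ notation with $orb(x,{S^*})$; these denote the same object (the orbit of $x$ inside ${S^*}$ under multiplication by $I({S^*})$), and I would either note this or write $orb(x,{S^*})$ throughout for consistency with Theorem \ref{thm:4.11}.
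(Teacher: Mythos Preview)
Your case analysis --- $I(S^*)$ via Lemma \ref{lemma:4.14}, $N(S)$ directly since $x \in S \cap orb(x,S^*)$, and $N(S^*) - N(S)$ by hypothesis --- is correct and essentially the same as the paper's, which splits into $x \in S$, $x \in I(S^*)$, and the remainder $N(S^*)\setminus S = N(S^*) - N(S)$. One correction: drop the references to Theorem \ref{thm:4.11}, since the corollary's conclusion is literally ``all orbits are basic,'' not completeness of $\Sigma_G(S)$; once your three cases are handled the proof is already complete and there is nothing for Theorem \ref{thm:4.11} to do.
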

\begin{proof}
Let $x \in {S^*}.$
It is clear that if $x \in S,$ then $x \in S \cap orb(x,S),$ thus $orb(x,S)$
is basic.
By Lemma \ref{lemma:4.14}, if $x \in I({S^*}),$ then $orb(x,{S^*})$ is basic.
Therefore, if $orb(x,S)$ is non-basic, $x \in N({S^*})$ and $x \notin S.$
\end{proof}

\begin{example}
Let us return to the smallest non-abelian (metacyclic) group,
${S_3} = G(3,2,2).$
Since $({k_1},m) = (1,3) = 1,$ we know that ${S_3}$ has trivial centre.
We calculate the sets ${R^*}$ and ${L^*}$ as the multiplicative closures,
modulo 3, of
\[
R = \{k^j-1: j\in\Z_2\} = \{2^0 - 1, 2^1 - 1\} = \{0,1\}
\] and
\[
L = \{1-k^j: j\in\Z_2\} = \{1-2^0 , 1-2^1 \} = \{0,2\}
\]
(Note that $L =  - R.$)
Thus ${R^*} = \left\{ {0,1} \right\}$ and ${L^*} = \left\{ {0,1,2} \right\}.$
We must next verify that each orbit is basic.
By Corollary \ref{cor:4.15}, we need only check those $x \in N({S^*} - S).$
Since ${R^*} = R,$ this case requires no checking.
For ${L^*},$ we need only check to see if $orb(1,{L^*})$ intersects $L.$
This is true since
\[
orb(1,{L^*})
= \left\{ {1 \cdot y:y \in ({L^*})} \right\}
= \left\{ {1,2} \right\}
\]
and
\[
orb(1,{L^*}) \cap L
= \left\{ 2 \right\}
\ne \varnothing.
\]
Thus Corollary \ref{cor:4.12} applies and we conclude that
$\left| {{\rm P}({S_3})} \right| = \left| {{R^*}} \right|m = 2 \cdot 3 = 6$
and
$\left| {\Lambda ({S_3})} \right| = \left| {{L^*}} \right|m = 3 \cdot 3 = 9,$
as previously stated.
This is a kind of ``solution'' to the mystery of how these orders can be
different in the face of so much symmetry.
In fact we can identify the exact mappings contained in both
${\rm P}(G)$ and $\Lambda (G)$ using containers.
Theorem \ref{thm:4.11} implies that
\begin{align*}
{\rm P}({S_3})
= C(0,1) \cup C(1,1)
= \left\{ {\mu (0,0),\mu(0,1),\mu(0,2),\mu(1,0),\mu(1,1),\mu (1,2)}\right\}
\end{align*}
and
\begin{align*}
\Lambda ({S_3})
&= C(0,1) \cup C(1,1) \cup C(2,1)\\
&= \{ \mu (0,0),\mu(0,1),\mu(0,2),\mu(1,0),\mu(1,1),\mu(1,2),
      \mu(2,0),\mu(2,1),\mu (2,2)\} .
\end{align*}
\end{example}

\begin{example}
Since the construction of ${\Sigma _G}(S)$ may be of independent interest,
we will select a small $m$ for $G,$ with trivial centre, and choose a base
$S$ which will generate in ``interesting'' example of ${\Sigma _G}(S)$.
Let $G = G(5,4,3)$ and let $S = \left\{ {0,4} \right\}.$
Since $(k - 1,m) = (2,5) = 1,$ it follows that $G$ has trivial centre.
$S$ is a base since $0 \in N(S)$ and $4 \in I(S).$
We compute that ${S^*} = \left\{ {0,1,4} \right\}.$
To see that each orbit is basic, we need only check
$x \in {S^*} - S = \left\{ 1 \right\}.$
Since $orb(1,{S^*}) = \left\{ {1,4} \right\},$ we have
$orb(1,{S^*}) \cap S = \left\{ 4 \right\} \ne \varnothing .$
It follows that Theorem \ref{thm:4.11} and Corollary \ref{cor:4.12} hold in
this case.
Thus we see that ${\Sigma _G}(S)$ is a union of the maximal containers
$C(x,1)$ for $x \in {S^*}.$
It follows from our theorems, since $\left| {C(x,1)} \right| = 5$ and
$\left| {{S^*}} \right| = 3,$ that $\left| {{\Sigma _G}(S)} \right| = 15.$
In this case we may also compute that
${R^*} = {L^*} = \left\{ {0,1,2,3,4} \right\}$ and
$\left| {{\rm P}(G)} \right| = \left| {\Lambda (G)} \right| = 25;$
thus it is clear that ${\Sigma _G}(S)$ is a semigroup distinct from the
commutation semigroups.
\end{example}

\section{Non-basic orbits}
\label{sec:5}

In the previous section we have seen that, given $G(m,n,k)$ and a base $S$,
if each orbit is basic, ${\Sigma _G}(S)$ is complete.
Theorem \ref{thm:4.8} then gives us a description of the mu-maps in
${\Sigma _G}(S)$ in terms of containers as well as an easily calculated
formula for its order.
The other case to consider is the occurrence of non-basic orbits in ${S^*}.$
Here we have a more complex situation for which a uniform description of the
mu-maps in ${\Sigma _G}(S)$ is more difficult to obtain.
Thus we will provide a procedure which lists the $x$'s with non-basic
orbits.
In each case we will then apply Lemma \ref{lemma:4.5} to generate
${{\cal F}_G}(x,S).$
Having done this for each non-basic $x,$ we can then find exactly those
containers which constitute ${\Sigma _G}(S).$
When $orb(x,{S^*})$ is non-basic, we may need to include more than one
$x$-container in the union.
Some $x$-families have one container which is a superset of all other
members of the family, and this can be used as the only $x$-container
in the union.
However, some $x$-families require the union of several containers.
These containers will not be disjoint; thus, to determine the order of the
$x$-family portion of the union forming ${\Sigma _G}(S),$ we may have to use
the principle of inclusion and exclusion.
We will then work through an example to illustrate the procedure in action.

\textbf{Procedure.}
We assume we are given $G = G(m,n,k),$ a metacyclic group with trivial centre,
and a base $S\subseteq \Z_m.$
We generate ${S^*}$ by closing $S$ under multiplication and then write
$S^* = I(S^*) \dot{\bigcup} N(S^*).$

Next, we calculate the orbits, for each $x \in {S^*},$ by forming the sets
$orb(x,{S^*}) = \left\{ {xy:y \in I({S^*})} \right\}.$

By Corollary \ref{cor:4.15}, we need only check the orbits for
$x \in N({S^*}) - N(S)$ to see if $orb(x,{S^*}) \cap S = \varnothing .$
If each orbit intersects $S,$ then Theorem \ref{thm:4.11} tells us that
${\Sigma _G}(S)$ is complete, how to write it as a union of containers, and
its order.

When an orbit does not intersect $S,$ we add $orb(x,{S^*})$ to the list of
non-basic orbits.
Assuming there are non-basic orbits, write ${S^*} = B \cup N,$ with
$B = \left\{ {x \in {S^*}:orb(x,{S^*}){\rm{ is basic}}} \right\}$ and
$N = \left\{ {x \in {S^*}:orb(x,{S^*}){\rm{ is not basic}}} \right\}.$
By Theorem \ref{thm:4.10}, we can write the portion of ${\Sigma _G}(S)$
covered by families of containers associated with basic orbits, as with size
$m\left| B \right|.$
The remaining portion of ${\Sigma _G}(S)$ is
$\displaystyle\dot{\bigcup}_{x\in N} \cup {\cal F}(x,S).$
We know that, for each $x \in N,$
$ \cup {\cal F}(x,S) = \bigcup\limits_{y \in Y(x)} {C(x,y)} .$
The first step in determining these unions is to calculate the set $Y(x)$ for
a particular $x \in N.$
Since $N \cap S = \varnothing ,$ we know that $\mu (x,y) \in {\Pi _\mu }(S).$
Therefore, there exist $s \in S$ and ${s^*} \in {S^*}$ such that $x = s{s^*}$
and $y = {s^*}z.$
We find all such pairs $({s^*},z)$ and write the list of containers
$C(x,{s^*}z)$ that the pairs yield.
This set is the $x$-family.
The union of these containers is the portion of  ${\Sigma _G}(S)$ contributed
to the union by $\cup {\cal F}(x,S).$
Looking at a list of such containers, we need to determine the containment
relationships among them.
We can make use of Lemma \ref{lemma:3.8} and the principle of inclusion and
exclusion to calculate the size of this union.

At this point an example will be useful.

\begin{example}
Taking $G = G(63,6,2),$ we see that since $k - 1 = 1,$ it is coprime to $63$
and, therefore, $G$ has trivial centre.
We will construct ${\rm P}(G)$ and determine its order.
Here a calculation gives
\[
R = \left\{ {0,\underline 1 ,3,7,15,\underline {31} } \right\}
\]
and, closing
this under multiplication, we have
\[
{R^*} = \{ 0,\underline 1 ,3,\underline 4 ,6,7,9,12,15,\underline {16} ,18,
21,24,27,28,30,\underline {31} ,33,36,39,42,45,48,49,51,54,\underline {55} ,
57,60,\underline {61} \},
\]
where the invertible elements in $R$ and ${R^*}$ have been underlined.
The values of $x$ for which we wish to check the orbits are in
\[
N({R^*}) - N(R)
= \{ 6,9,12,18,21,24,27,28,30,33,36,39,42,45,48,49,51,54,57,60\}.
\]
Note that $31$ generates the group of invertibles $I({R^*});$
therefore we can multiply repeatedly by $31$ to produce each orbit.
When we do this, we find three non-basic orbits:
$or{b_R}(9),$ $or{b_R}(21),$ and $or{b_R}(42).$
So for each value of $x \in {R^*} - \left\{ {9,21,42} \right\},$ the
$x$-family is complete and its contribution to ${\rm P}(G)$ is $C(x,1).$
Each of these maximal containers has order $63.$

Next consider the $9$-family.
Here we wish to find all solutions of the congruence
$uv = 9\;(\bmod 63)$ for $u \in R$ and $v \in N({R^*}).$
The modular arithmetic here may be simplified by noting that if we write $uv$
as $9u'v',$ we can reduce $9u'v' = 9\;(\bmod 9 \cdot 7)$ to
$u'v' = 1\;(\bmod 7),$ yielding
\[
\left\{ {u',v'} \right\}
\in \left\{ {\{ 1,1\} \left\{ {2,4} \right\},\left\{ {3,5} \right\},
    \left\{ {6,6} \right\}} \right\},
\]
using multisets, since we can ignore order temporarily, for these pairs.
Thus, depending on how the two factors of $3$ are distributed between $u$ and
$v,$
\[\left\{ {u,v} \right\}
\in \{ \{ 1,9\} ,\{ 3,3\} ,\{ 18,4\} ,\{ 6,12\} ,\{ 2,36\} ,
\{ 27,5\} ,\{ 9,15\} ,\{ 3,45\} ,\{ 54,6\} ,\{ 18,18\} \}.
\]
Since $u \in R,$ we can remove any doubleton with no coordinate in $R.$
This leaves us with
\[
\{ \{ 1,9\} ,\{ 3,3\} ,\{ 9,15\} ,\{ 3,45\} \} .
\]
Checking that $v \in N({R^*}),$ we see that all four doubletons are solutions.
Thus we have
\[
\left( {u,v} \right) \in \{ (1,9),(3,3),(3,45),(15,9)\} ;
\]
hence,
\[
{{\cal F}_G}(9,R) = \{ C(9,9),C(9,3),C(9,45)\} .
\]
By Lemma \ref{lemma:3.8} and \ref{lemma:3.9}, we have
$C(9,45) = C(9,9) \subseteq C(9,3)$ and therefore,
$ \cup {{\cal F}_G}(9,R) = C(9,3)$ and
$\left| { \cup {{\cal F}_G}(9,R)} \right| = \left| {C(9,3)} \right| = 21.$

To find $ \cup {{\cal F}_G}(21,R),$ we solve the congruence
$uv = 21\;(\bmod 63)\;(u \in R,v \in N({R^*}))$ by removing $21$ to obtain
$u'v' = 1\;(\bmod 3).$
The two solutions, $(1,1)$ and $(2,2),$ yield the doubletons
$\{ \{ 1,21\} ,\{ 3,7\} ,$$\{ 2,42\} , \{ 6,14\} \} .$
Checking the domains, we obtain the solutions of the original congruence,
$\{ (1,21),(3,7),(7,3)\} .$
Thus ${{\cal F}_G}(21,R) = \left\{ {C(21,21),C(21,7),C(21,3)} \right\}.$
Note, by Lemma \ref{lemma:3.9}, that $C(21,21) \subseteq C(21,3)$ and
$C(21,21) \subseteq C(21,7),$ but $C(21,3)$  and $C(21,7)$ are incomparable.
Also $C(21,21) = C(21,3) \cap C(21,7);$
therefore, by the law of inclusion and exclusion,
\[
\left| {{{\cal F}_G}(21,R)} \right|
= \left| {C(21,3)} \right| + \left| {C(21,7)}\right|-\left|{C(21,21)}\right|
= 21 + 9 - 3 = 27.
\]
Similarly, we find that $ \cup {{\cal F}_G}(42,R) = C(42,3)$ with
$\left| { \cup {{\cal F}_G}(42,R)} \right| = 21.$
In summary, there are $27$ elements of ${R^*}$ with basic orbits, therefore
that portion of ${\rm P}(G)$ is
$\left( {\bigcup\limits_{orb(x,{R^*})basic} {C(x,1)} } \right)$ having order
$27 \cdot 63 = 1701.$
The remainder of ${\rm P}(G)$ consists of the unions of the three families
calculated.
Thus
\begin{align*}
{\rm P}(G)
&= \left( {\bigcup\limits_{orb(x,{R^*})basic} {C(x,1)} } \right) \cup
   \left( { \cup {{\cal F}_G}(9,R)} \right) \cup
   \left( { \cup {{\cal F}_G}(21,R)} \right) \cup
   \left( { \cup {{\cal F}_G}(42,R)} \right)\\
&= \left( {\bigcup\limits_{orb(x,{R^*})basic} {C(x,1)} } \right) \cup
   C(9,3) \cup ((C(21,3) \cup C(21,7)) - C(21,21)) \cup C(42,3)
\end{align*}
with $\left| {{\rm P}(G)} \right| = 1701 + 21 + 27 + 21 = 1770.$
\end{example}

\section{Application of the general theory to the commutation semigroups}
\label{sec:6}

In this section we will apply the general results obtained in Section
\ref{sec:4} to the commutation semigroups ${\rm P}(G)$ and $\Lambda (G)$ for
$G = G(m,n,k)$ a metacyclic group with trivial centre.
Specifically we will investigate situations in which $m$ and $n$ are of,
number theoretically, simple form.
As mentioned earlier, $m = 63$ is the first value for which non-basic orbits
exist.
Note that $63( = {3^2} \cdot 7)$ is of the form ${p^2}q.$
We will show, in this section, that if $m$ is of the form $p$ or ${p^2}$ or if
$n$ is prime, then there are no non-basic orbits and the commutation
semigroups are complete.
Since this can fail when $m = {p^2}q,$ it would be interesting to study the
situation for $m = pq$ and ${p^3}.$

\begin{theorem}
\label{thm:6.1}
If $G = G(p,n,k)$ with $p$ prime and $S$ is a base, then ${\Sigma _G}(S)$
is complete.
\end{theorem}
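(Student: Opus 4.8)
The plan is to reduce completeness to an orbit computation via the machinery of Section \ref{sec:4}. By Theorem \ref{thm:4.11}, $\Sigma_G(S)$ is complete if and only if $orb(x,S^*)$ is basic for every $x\in S^*$, and by Corollary \ref{cor:4.15} it is enough to check this for $x\in N(S^*)-N(S)$. So the whole argument comes down to describing $N(S^*)$ and $N(S)$ when $m=p$ is prime.

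The key observation is that $\Z_p$ has no number-theoretic complexity: an element of $\Z_p$ is invertible exactly when it is coprime to $p$, i.e.\ exactly when it is nonzero, so the unique non-invertible element of $\Z_p$ is $0$. Since $S$ is a base, $0\in S$, and because $0$ is absorbing under multiplication, $0\in S^*$ as well. Therefore $N(S)=\{0\}=N(S^*)$, and consequently $N(S^*)-N(S)=\varnothing$.

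With that, Corollary \ref{cor:4.15} applies vacuously: there are no $x\in N(S^*)-N(S)$ to check, so every orbit $orb(x,S^*)$ is basic, and Theorem \ref{thm:4.11} then gives that $\Sigma_G(S)$ is complete (and, as a bonus, $\Sigma_G(S)=\dot{\bigcup}_{x\in S^*}C(x,1)$ with $|\Sigma_G(S)|=m|S^*|$). If one prefers to avoid the vacuous invocation, one can instead note directly that the only element of $S^*$ whose orbit needs attention is $0$ itself, and $orb(0,S^*)=\{0\}$ meets $S$, so its orbit is basic; every other element of $S^*$ is invertible, hence has a basic orbit by Lemma \ref{lemma:4.14}.

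I do not expect a genuine obstacle here: all the substantive work is already packaged in Sections \ref{sec:3} and \ref{sec:4}, and the prime case is precisely the one in which the arithmetic of the modulus degenerates. The only point requiring a moment's care is confirming that $0$ actually lies in $S^*$ (so that $N(S^*)$ is literally $\{0\}$), which is immediate from the definition of a base together with the absorbing property of $0$ in the multiplicative semigroup $\Z_m$.
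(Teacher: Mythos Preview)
Your proof is correct and follows essentially the same route as the paper: both observe that in $\Z_p$ the only non-invertible is $0$, conclude $N(S^*)-N(S)=\varnothing$, and then invoke Corollary~\ref{cor:4.15} and Theorem~\ref{thm:4.11}. Your write-up is a bit more careful (e.g.\ explicitly justifying $0\in S^*$ and offering the direct alternative via Lemma~\ref{lemma:4.14}), but the argument is the same.
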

\begin{proof}
Since $p$ is prime, all non-zero elements of  $\Z_p$ are invertible.
Thus $N({S^*}) = \left\{ 0 \right\}.$
Since $0 \in S,$ we have  $N({S^*}) - N(S) = \varnothing ,$ and thus,
by Corollary \ref{cor:4.15}, it follows that $orb(x,R)$ is basic for each
$x \in {S^*}.$
The result follows by Theorem \ref{thm:4.11}.
\end{proof}

Theorem \ref{thm:6.1} with Lemma \ref{lemma:4.2} imply the following.

\begin{corollary}
\label{cor:6.2}
If $G = G(p,n,k)$ with $p$ prime, then ${\rm P}(G)$ and $\Lambda (G)$
are complete.\qed
\end{corollary}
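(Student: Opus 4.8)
The plan is to deduce this immediately from Theorem \ref{thm:6.1} and Lemma \ref{lemma:4.2}, so the work is essentially bookkeeping. First I would invoke Lemma \ref{lemma:4.2}: since $G = G(p,n,k)$ is, by our standing assumption (Corollary \ref{cor:2.3}), a finite metacyclic group with trivial centre, the sets $R = \{k_j \bmod p : j\in\Z_n\}$ and $L = \{-k_j \bmod p : j\in\Z_n\}$ are bases, and moreover ${\rm P}(G) = {\Sigma _G}(R)$ and $\Lambda(G) = {\Sigma _G}(L)$. This is the only point where triviality of the centre is used, and it is already packaged into Lemma \ref{lemma:4.2}.

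Next I would apply Theorem \ref{thm:6.1} twice, once with $S = R$ and once with $S = L$. Since $m = p$ is prime and $R$ and $L$ are bases, Theorem \ref{thm:6.1} yields that ${\Sigma _G}(R)$ and ${\Sigma _G}(L)$ are complete. Combining this with the identifications ${\rm P}(G) = {\Sigma _G}(R)$ and $\Lambda(G) = {\Sigma _G}(L)$ from the previous step gives that ${\rm P}(G)$ and $\Lambda(G)$ are complete, which is exactly the assertion of the corollary.

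There is no real obstacle here: the corollary is a direct specialization of the general machinery of Section \ref{sec:4}. If one wanted a self-contained argument, the substance is the one-line observation already made in the proof of Theorem \ref{thm:6.1}: when $p$ is prime, every nonzero element of $\Z_p$ is invertible, so $N(S^*) = \{0\}$ for any base $S$; since $0\in S$, the set $N(S^*) - N(S)$ is empty, and Corollary \ref{cor:4.15} forces every orbit to be basic, whence Theorem \ref{thm:4.11} gives completeness. Applying this to $S = R$ and $S = L$ and quoting Lemma \ref{lemma:4.2} again recovers the statement. If desired, one could even record the orders here as $|{\rm P}(G)| = |R^*|\,p$ and $|\Lambda(G)| = |L^*|\,p$ via Corollary \ref{cor:4.12}, though the corollary as stated only claims completeness.
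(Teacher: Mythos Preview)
Your proposal is correct and follows exactly the paper's own approach: the paper simply states that Theorem \ref{thm:6.1} together with Lemma \ref{lemma:4.2} imply the corollary, which is precisely the two-step argument you outline.
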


\begin{theorem}
\label{thm:6.3}
If $G = G({p^2},n,k)$ with $p$ prime, then ${\rm P}(G)$ and $\Lambda (G)$
are complete.
\end{theorem}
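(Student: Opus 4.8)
The plan is to establish the statement for an arbitrary base $S$ --- that is, to show $\Sigma_G(S)$ is complete whenever $G = G(p^2,n,k)$ --- and then to specialize to $S = R$ and $S = L$ via Lemma \ref{lemma:4.2}, exactly as Corollary \ref{cor:6.2} was deduced from Theorem \ref{thm:6.1}. By Theorem \ref{thm:4.11} it is enough to show that $orb(x,S^*)$ is basic for every $x \in S^*$, and by Corollary \ref{cor:4.15} I only need to check those $x \in N(S^*) - N(S)$.

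The argument rests on one arithmetic fact about $\Z_{p^2}$: its non-invertible elements are precisely the multiples of $p$, and the product of any two of them is divisible by $p^2$, hence is $0$. Granting this, fix $x \in N(S^*) - N(S)$. Since $x$ is non-invertible, $x \in N(S)$ would be the same as $x \in S$; so $x \notin S$, and therefore $x$ is a product $s_1 s_2 \cdots s_t$ of generators with $t \ge 2$. A product of invertibles of $\Z_{p^2}$ is invertible, so not every $s_i$ can be invertible; and if two of the factors were non-invertible, their product --- and hence $x$ --- would be $0 \in N(S)$, a contradiction. Thus exactly one factor, say $s_i$, lies in $N(S)$, and the product $u$ of the remaining factors is an invertible element of $S^*$, i.e.\ $u \in I(S^*)$.

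Then $x = u s_i$ with $u \in I(S^*)$ and $s_i \in S$, so $x \in orb(s_i,S^*)$; since also $s_i = u^{-1}x \in orb(x,S^*)$ (recall $u^{-1} \in I(S^*)$), Lemma \ref{lemma:4.9} forces $orb(x,S^*) = orb(s_i,S^*)$, and this common orbit meets $S$ in $s_i$, so it is basic. Hence every orbit of $S^*$ is basic, $\Sigma_G(S)$ is complete by Theorem \ref{thm:4.11}, and taking $S = R$ and $S = L$ and applying Lemma \ref{lemma:4.2} yields that ${\rm P}(G)$ and $\Lambda(G)$ are complete. I do not anticipate a real obstacle here; the only thing needing care is the bookkeeping of invertible versus non-invertible factors, and the structural heart of the matter is simply that in $\Z_{p^2}$ the non-invertibles form a square-zero ideal, which collapses $N(S^*)$ down to $\{0\} \cup (I(S^*)\cdot N(S))$ --- a phenomenon that fails for $m = p^2 q$, consistent with the non-basic orbits first appearing at $m = 63$.
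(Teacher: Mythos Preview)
Your proof is correct and takes a genuinely different route from the paper's. The paper argues only for $S=R$ (remarking that $L$ is similar) and proves the dichotomy that $N(R)$ is either $\{0\}$ or all of $N(\Z_{p^2}) = \{0,p,2p,\ldots,(p-1)p\}$, via a binomial-theorem computation that exploits the specific shape $k^t-1$ of the elements of $R$; in either case $N(R^*)-N(R)=\varnothing$, so Corollary~\ref{cor:4.15} applies vacuously. You instead allow $N(S^*)-N(S)$ to be nonempty and show directly that any such $x$ has a basic orbit, using only the ring-theoretic fact that in $\Z_{p^2}$ the non-invertibles square to zero. Your argument is both more elementary (no arithmetic with powers of $k$) and strictly more general: it upgrades Theorem~\ref{thm:6.3} to the analogue of Theorem~\ref{thm:6.1}, namely that $\Sigma_G(S)$ is complete for \emph{every} base $S$ when $m=p^2$. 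What the paper's approach buys is the extra structural information that $N(R)$ is all-or-nothing among the non-invertibles of $\Z_{p^2}$, which is not needed for completeness but may be of independent interest.
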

\begin{proof}
We will prove the result for ${\rm P}(G)$ and comment that the proof for
$\Lambda (G)$ is similar.
Given $m = {p^2},$ for some prime $p,$ we claim that either
$N(R) = \left\{ 0 \right\}$ or
$N(R) = \left\{ {0,p,2p, \ldots ,(p - 1)p} \right\}.$
Assuming this has been shown, Corollary \ref{cor:4.15} says it is enough to
check that the elements of $N({R^*}) - N(R)$ generate basic orbits.
If $N(R) = \left\{ 0 \right\},$ it is clear that the only non-invertible in
the closure of $R$ would be $0$ itself.
In this case, $N({R^*} - R) = \varnothing ,$ and Corollary \ref{cor:4.15}
implies that all orbits are basic.
Hence, by Theorem \ref{thm:4.11}, ${\rm P}(G)$ is complete.
If $N(R) = \left\{ {0,p,2p, \ldots ,(p - 1)p} \right\},$ this is the complete
set of non-invertibles in $\Z_{p^2}$ and therefore no new non-invertibles
could be generated in ${R^*}$ when forming their products.
Again, Corollary \ref{cor:4.15} assures us that all orbits are basic and
Theorem \ref{thm:4.11} yields our conclusion.

It remains to prove the following:

\emph{Claim.}
Either $N(R) = \left\{ 0 \right\}$ or
$N(R) = \left\{ {0,p,2p, \ldots ,(p - 1)p} \right\}.$

\emph{Proof of Claim.}
We will suppose that $N(R) \ne \left\{ 0 \right\}$ and show that
$N(R) = \left\{ {0,p,2p, \ldots ,(p - 1)p} \right\}.$
Note that each of the elements of the form $ap\;(0 \le a \le p - 1)$ has a
common factor of $p$ with $m( = {p^2})$ and, hence, is non-invertible in
$\Z_{p^2}.$
We are assuming there is a non-zero, non-invertible in $R,$ thus there exists
an $a\;(0 < a \le p - 1)$ and a $t\;(1 \le t \le n - 1)$ so that
${k^t} - 1 = ap\;\pmod {p^2}.$
Note that $a$ is invertible modulo $p;$ thus there is an
$s\;(1 \le s \le p - 1)$ so that $as = 1(\bmod p).$
Thus there is $u\;(1 \le u \le p - 1)$ for which $sa = 1 + up.$
If $a,$ $s,$ and $u$ are interpreted as integers modulo ${p^2},$ we have
$sa = 1 + up\;\pmod {p^2}.$
Since ${k^t} - 1 = ap\;\pmod {p^2},$ we have ${k^t} = ap + 1\;\pmod {p^2},$
and thus, ${k^{st}} = {(ap + 1)^s}\pmod {p^2}.$
By the binomial theorem,
\[
{k^{st}} = {(ap + 1)^s}\pmod {p^2} = \left( {\begin{array}{*{20}{c}}
s\\
0
\end{array}} \right){a^s}{p^s} + \left( {\begin{array}{*{20}{c}}
s\\
1
\end{array}} \right){a^{s - 1}}{p^{s - 1}} +  \cdots  + \left(
{\begin{array}{*{20}{c}}
s\\
{s - 1}
\end{array}} \right)ap + 1\;\pmod {p^2}.
\]
Reducing these terms modulo ${p^2},$ we obtain
\[
{k^{st}}
= sap + 1\;\pmod {p^2} = (1 + up)p + 1\;\pmod {p^2}
= p + 1\;\pmod {p^2}.
\]
Therefore, ${k^{st}} - 1 = p,$ and it follows that $p \in R.$
If $b\in\Z_{p^2}\cap\{1,2,\ldots,p-1\},$ we note that
${k^{bst}} = {(p + 1)^b}\;\pmod {p^2}.$
Again, by the binomial theorem we have
\[
{k^{bst}} = {(p + 1)^b}\;\pmod {p^2} = bp + 1\;\pmod {p^2}
\]
and, hence, $bp \in N(R).$
This establishes our claim.
\end{proof}

Next we introduce a technical lemma.
Recall that ${k_t} = {k^t} - 1,$ that $k$ and ${k_1}$ are both coprime to $m,$
that $n = in{d_m}(k)$ and, hence, that ${k_n} = 0 \pmod m.$

\begin{lemma}
\label{lemma:6.4}
Let $G = G(m,n,k)$ and let $p$ be a prime which divides $m.$
Let $s\;\left( {1 < s \le n} \right)$ be minimal so that $p$ divides ${k_s}.$
Then, for each $t\;\left( {1 < t \le n} \right),$ $p$ divides ${k_t}$ if and
only if $s$ divides $t.$
\end{lemma}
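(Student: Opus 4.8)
The plan is to work in the multiplicative group of units $U(\Z_m)$ and track the $p$-adic behaviour of $k^t - 1$. Set $d = \mathrm{ind}_m(k)$, so $k$ has order $d$ in $U(\Z_m)$ and $k_n = k_d = 0 \pmod m$. The key observation is that $p \mid k_t$ is equivalent to $k^t \equiv 1 \pmod p$, i.e. to the statement that $t$ is a multiple of the order of $k$ in the smaller group $U(\Z_p)$ — call that order $e$. So the real content is that the set $T = \{\, t \in \Z^+ : p \mid k_t \,\}$ is exactly the set of multiples of $e$, and then one must reconcile the minimal such $t$ in the range $1 < t \le n$ with $s$. I would first argue $e \mid n$: since $k^n \equiv 1 \pmod m$ implies $k^n \equiv 1 \pmod p$, the order $e$ of $k$ mod $p$ divides $n$. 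Then I claim $s$, the minimal index in $(1,n]$ with $p \mid k_s$, equals $e$ — note $s \le n$ because $t = n$ itself works ($k_n = 0$), and minimality of $s$ together with "$p \mid k_t \iff e \mid t$" forces $s$ to be the least positive multiple of $e$ that exceeds $1$; since $e = \mathrm{ind}_p(k) \ge 1$ and (as $G$ is non-abelian, so $k \not\equiv 1 \pmod m$, though $k$ could still be $1$ mod $p$) one has to handle the degenerate possibility $e = 1$ separately. If $e = 1$ then $p \mid k_t$ for every $t$, so $s = 2$ and $s \mid t$ is not the right characterization — so I expect the hypothesis $1 < s$ in the statement, combined with the standing assumption that such an $s$ is the \emph{first} one, to implicitly rule out $e = 1$, or equivalently the lemma is asserting $s = e > 1$. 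I would make this explicit: the hypothesis that $s$ with $1 < s \le n$ is \emph{minimal} with $p \mid k_s$ is only consistent with $p \nmid k_1$, i.e. with $e \ge 2$, in which case $s = e$.

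With $s = e$ identified, the biconditional becomes the elementary fact about cyclic groups: $k^t \equiv 1 \pmod p \iff \mathrm{ord}_p(k) \mid t \iff s \mid t$. Concretely, for the forward direction, if $p \mid k_t$ then $k^t \equiv 1 \pmod p$; writing $t = qs + r$ with $0 \le r < s$ gives $k^r \equiv (k^s)^{-q} \cdot k^t \equiv 1 \pmod p$, and minimality of $s$ among indices exceeding $1$ forces $r = 0$ (using $p \nmid k_1$ to exclude $r = 1$, and $r = 0$ for the rest) — so $s \mid t$. For the converse, if $s \mid t$, say $t = qs$, then $k^t = (k^s)^q \equiv 1^q = 1 \pmod p$, so $p \mid k_t$. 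I would phrase both directions through the factorization $k^{qs} - 1 = (k^s - 1)(k^{s(q-1)} + \cdots + k^s + 1)$, which makes the converse a one-line divisibility and avoids invoking group theory explicitly if the paper prefers to stay elementary.

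The main obstacle I anticipate is purely bookkeeping at the boundary: the statement restricts $t$ and $s$ to the range $(1, n]$, and one must be careful that the "minimal $s > 1$" really is the order $e$ and that no index $t$ in $(1,n]$ with $s \mid t$ is missed or spuriously included — in particular checking $t = n$ is consistent ($s = e \mid n$, and indeed $k_n \equiv 0$). A secondary subtlety is the case $p \mid k_1$: then no $s$ with $1 < s$ is minimal (since $s$ would have to be compared against $t=1$, which is excluded by fiat), so the lemma's hypothesis is vacuous or must be read as presupposing $p \nmid k_1$; I would add a sentence noting that the hypothesis "$s$ minimal with $1 < s \le n$ and $p \mid k_s$" is only invoked when $p \nmid k_1$, so that $s$ is genuinely the multiplicative order of $k$ modulo $p$. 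Everything else is the standard cyclic-group argument and the geometric-series factorization, so I would keep the write-up short.
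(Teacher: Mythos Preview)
Your argument is correct, and the route is genuinely different from the paper's. You recast the statement as a fact about the multiplicative order $e$ of $k$ in $U(\Z_p)$: since $k_t = k^t - 1$, the condition $p \mid k_t$ is precisely $k^t \equiv 1 \pmod p$, so the set of such $t$ is exactly $e\Z$, and identifying $s = e$ finishes the lemma in one stroke. The paper instead stays entirely elementary: it factors $k_s = k_1(1 + k + \cdots + k^{s-1})$ and $k_t = k_1(1 + k + \cdots + k^{t-1})$, writes $t = qs + r$, and manipulates the geometric sum directly to show $p \mid k_r$, forcing $r = 0$ by minimality. Your version is shorter and more conceptual; the paper's is self-contained and avoids invoking group structure modulo $p$. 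You do mention the geometric-series factorization as an alternative for the converse, which is exactly what the paper does.

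One remark: your extended worry about the degenerate case $e = 1$ (equivalently $p \mid k_1$) is unnecessary here. Throughout the paper $G = G(m,n,k)$ is assumed to have trivial centre, so $(m, k_1) = 1$; since $p \mid m$, this forces $p \nmid k_1$ and hence $e \ge 2$ automatically. The paper's proof opens by noting exactly this, and uses it to justify $1 < s$. So you can drop the hedging and simply assert $s = e \ge 2$ with a one-line appeal to the standing hypothesis.
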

\begin{proof}
We will first justify the existence of the number $s$ in the statement of the
Lemma.
Note that since ${k_n} = 0 \pmod m,$ we know that $p$ divides ${k_{n.}}$
Thus there is a minimal $s$ $(s \le n)$ for which $p$ divides ${k_s}.$
Since $(m,{k_1}) = 1,$ $p$ does not divide ${k_1}$ and, therefore,
$1 < s \le n.$

$\left(  \Rightarrow  \right)$
Since  ${k_s} = {k_1}(1 + k + \ldots + {k^{s - 1}})$ and
${k_t} = {k_1}(1 + k + \ldots + {k^{t - 1}}),$ and since $p$ does not divide
${k_1},$ it divides both $(1 + k + \ldots + {k^{s - 1}})$ and
$(1 + k + \ldots + {k^{t - 1}}).$
Let $q$ and $r$ be the non-negative integers so that $t = qs + r$ with
$0 \le r < s$.
If $r = 0,$ then $s$ divides $t$ and we are done.
Suppose then that $r > 0.$ Then
\begin{align*}
(1 + k + \ldots + {k^{t - 1}})
&= (1 + k + \ldots + {k^{s - 1}}) + ({k^s} + {k^{s + 1}} + \ldots +\\
&\hspace*{2pc} {k^{2s - 1}}) + ({k^{2s}} + {k^{2s + 1}} +
    \ldots +  {k^{3s - 1}}) + \ldots +\\
&\hspace*{2pc} ({k^{(q - 1)s}} + {k^{(q - 1)s + 1}} +
  \ldots + {k^{qs - 1}}) + ({k^{qs}} + {k^{qs + 1}} +
  \ldots + {k^{qs + r - 1}}) \\
&= (1 + k + \ldots + {k^{s - 1}}) + {k^s}(1 + k +
  \ldots + {k^{s - 1}}) + \ldots + \\
&\hspace*{2pc} {k^{(q - 1)s}}(1 + k + \ldots + {k^{s - 1}}) + {k^{qs}}(1 + k +
  \ldots + {k^{r - 1}})\\
&= (1 + {k^s} + {k^{2s}} + \ldots + {k^{(q - 1)s}})(1 + k +
  \ldots + {k^{s - 1}}) + {k^{qs}}(1 + k + \ldots + {k^{r - 1}}).
\end{align*}
Since $p$ divides $(1 + k + \ldots + {k^{s - 1}})$ and
$(1 + k + \ldots + {k^{t - 1}}),$ it follows that $p$ must also divide
${k^{qs}}(1 + k + \ldots + {k^{r - 1}}).$
By Lemma \ref{lemma:2.1}, $p$ does not divide $k;$ therefore it must divide
$(1 + k + \ldots + {k^{r - 1}})$ and, hence, $p$ divides
${k_r} = (k - 1)(1 + k + \ldots + {k^{r - 1}}).$
But since $r < s,$ this contradicts the minimality of $s.$ Thus $r = 0,$
and our result follows.

$\left(  \Leftarrow  \right)$
Now suppose that $s$ divides $t\;(1 < t \le n)$ with $t = qs$ for some
positive integer $q.$
From a calculation similar to the one above, we derive
\[
(1 + k + \ldots + {k^{t - 1}})
= (1 + k + \ldots + {k^{qs - 1}})
= (1 + {k^s} + {k^{2s}} + \ldots + {k^{(q - 1)s}})(1 + k +
  \ldots + {k^{s - 1}}).
\]
We next multiply both sides by ${k_1}:$
\[
{k_1}(1 + k + \ldots + {k^{t - 1}})
= (1 + {k^s} + {k^{2s}} + \ldots + {k^{(q - 1)s}}){k_1}(1 + k +
  \ldots + {k^{s - 1}}).
\]
Therefore, ${k_t} = (1 + {k^s} + {k^{2s}} + \ldots + {k^{(q - 1)s}}){k_s},$
and since $p$ divides ${k_s},$ it follows that $p$ divides ${k_i}.$
\end{proof}

\begin{theorem}
\label{thm:6.5}
If $G = G(m,p,k)$ with $p$ prime, then
$R^* - \{0\}, L^*-\{0\} \subseteq I(\Z_m)$ and both ${\rm P}(G)$ and
$\Lambda (G)$ are complete.
\end{theorem}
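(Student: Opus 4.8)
The plan is to push everything back to the single arithmetic claim that $k_t$ is coprime to $m$ for every $t$ with $1 \le t \le p-1$; once this is in hand, the rest of the proof is formal bookkeeping. Recall $R = \{k_0, k_1, \dots, k_{p-1}\}$ with $k_0 = 0$, so if each $k_t$ ($1 \le t \le p-1$) is coprime to $m$ then $R \setminus \{0\} \subseteq I(\Z_m)$. Since the invertible elements of $\Z_m$ are closed under multiplication and $0$ is absorbing, the multiplicative closure $R^*$ satisfies $R^* - \{0\} \subseteq I(\Z_m)$, that is, $N(R^*) = \{0\} = N(R)$. Because $L = -R$ and an element of $\Z_m$ is coprime to $m$ precisely when its negative is, the identical reasoning yields $L^* - \{0\} \subseteq I(\Z_m)$ and $N(L^*) = \{0\} = N(L)$. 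Then $N(R^*) - N(R) = \varnothing$ (and likewise for $L$), so the hypothesis of Corollary~\ref{cor:4.15} is vacuously satisfied, every orbit in $R^*$ (resp.\ $L^*$) is basic, and Theorem~\ref{thm:4.11} together with Lemma~\ref{lemma:4.2} gives that ${\rm P}(G) = \Sigma_G(R)$ and $\Lambda(G) = \Sigma_G(L)$ are complete.

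To prove the coprimality claim I would invoke Lemma~\ref{lemma:6.4} as the engine. Fix a prime $q$ dividing $m$. Since $n = p = in{d_m}(k)$, we have $k_n \equiv 0 \pmod m$, hence $q \mid k_n$, so there is a minimal $s$ with $1 < s \le n$ and $q \mid k_s$; Lemma~\ref{lemma:6.4} then asserts that, for $1 < t \le n$, $q \mid k_t$ if and only if $s \mid t$. Taking $t = n = p$ shows $s \mid p$, and since $p$ is prime with $1 < s \le p$ we get $s = p$. Consequently $q \nmid k_t$ for every $t$ with $1 < t < p$, and combining this with $q \nmid k_1$ (which holds because $(m,k_1) = 1$ while $q \mid m$) we obtain $q \nmid k_t$ for all $1 \le t \le p-1$. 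As $q$ ranges over the prime divisors of $m$, each such $k_t$ is coprime to $m$, i.e.\ $k_t \in I(\Z_m)$, which is exactly the claim.

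The only step with genuine content is the number theory of the preceding paragraph, and the hard part there is the observation that $k_n \equiv 0$ forces the ``divisibility period'' $s$ of $q$ to divide $n = p$ and hence to equal $p$ --- this is the precise point where primality of $n$ enters, and the only place the argument would break down for composite $n$ (compare the non-basic orbits that appear for $m = 63$, $n = 6$). Everything else is routine: the closure-of-invertibles argument for $R^*$ and $L^*$, and the appeals to Corollary~\ref{cor:4.15}, Theorem~\ref{thm:4.11}, and Lemma~\ref{lemma:4.2}. I would also note in passing that since $G$ is non-abelian with trivial centre we have $m \ge 3$, so $0 \notin I(\Z_m)$ and the identifications $N(R^*) = N(R) = \{0\}$ (and their $L$-analogues) are literally correct.
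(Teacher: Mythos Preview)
Your proof is correct and follows essentially the same approach as the paper's: both arguments use Lemma~\ref{lemma:6.4} to show that for each prime $q\mid m$ the minimal $s$ with $q\mid k_s$ must divide $n=p$ and hence equal $p$, so every nonzero $k_t$ is invertible; both then conclude via Corollary~\ref{cor:4.15} and Theorem~\ref{thm:4.11}. Your write-up is in fact a bit cleaner than the paper's in that you use $q$ for the auxiliary prime (the paper overloads the letter $p$) and you spell out explicitly why $R^*-\{0\}$ inherits invertibility from $R-\{0\}$.
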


\begin{proof}
Suppose  $p$ is a prime which divides $m.$
Since ${k_n} = 0 \pmod m,$ we know that ${k_n}$ must have $p$ as a divisor.
Select $s$ minimal so that $p$ divides ${k_s}.$
We know that ${k_1} = k - 1$ is coprime to $m,$ thus $p$ does not divide
${k_1}.$
So $1 < s \le n.$
By Lemma \ref{lemma:6.4}, it follows that $s$ divides $n,$ but, since $n$ is
prime, we have $s = n.$
It follows that $p$ is not a divisor of any ${k_i}$ with $i < n.$
This argument applies to each prime dividing $m;$ therefore we can conclude
that no prime divisor of $m$ divides ${k_i}$ with $i < n.$
Thus all such ${k_i}$ are coprime to $m$ and, hence they, and their products,
are invertible in $\Z_m.$
By Lemma \ref{lemma:2.4}(i), $0 \in R,$ thus 0 is the only non-invertible in
${R^*},$ and it follows that $N({R^*}) - N(R) = \varnothing .$
Then, by Corollary \ref{cor:4.15}, all orbits are basic and our result for
${\rm P}(G)$ follows from Theorem \ref{thm:4.11}.
Note that if $x$ is invertible in $\Z_m$ with inverse ${x^{ - 1}}$, then
$( - x)( - {x^{ - 1}}) = x{x^{ - 1}} = 1.$
Thus all non-zero elements of ${L^*}$ are also invertible.
So both the left are right commutation semigroups are complete.
\end{proof}

\begin{theorem}
\label{thm:6.6}
Any non-abelian $pq$-group is a metacyclic group with trivial centre
and its commutation semigroups are complete.
\end{theorem}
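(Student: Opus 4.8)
The plan is to reduce both assertions to results already in hand. Recall from Section~\ref{sec:1} that a non-abelian $pq$-group is an extension of a cycle of order $p$ by a cycle of order $q$ with $p$ and $q$ prime, and that every such group admits a presentation of the form $G(p,q,k)$; thus in the notation of this paper both $m=p$ and $n=q$ are prime.

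First I would check the two arithmetic conditions appearing in Corollary~\ref{cor:2.3}. Non-abelianness means the conjugation action of $b$ on $a$ is nontrivial, so $k\not\equiv 1\pmod p$; since $p$ is prime this immediately gives $(p,k-1)=1$. Next, $b^q=1$ forces $k^q\equiv 1\pmod p$, so $\mathrm{ind}_p(k)$ divides $q$; as $q$ is prime and $k\not\equiv 1\pmod p$ excludes $\mathrm{ind}_p(k)=1$, we conclude $q=\mathrm{ind}_p(k)$. Hence $G(p,q,k)$ meets the hypotheses of Corollary~\ref{cor:2.3}, and therefore it is a finite metacyclic group with trivial centre, which is the first claim.

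For completeness of the commutation semigroups I would simply invoke the earlier theorems of this section: since $m=p$ is prime, Corollary~\ref{cor:6.2} applies verbatim and tells us that ${\rm P}(G)$ and $\Lambda(G)$ are complete. (Equivalently, since $n=q$ is prime, Theorem~\ref{thm:6.5} yields the same conclusion and, in addition, that $R^*-\{0\}$ and $L^*-\{0\}$ consist entirely of invertibles of $\Z_m$.) I do not anticipate a genuine obstacle here: the theorem is essentially a bookkeeping observation combining Section~\ref{sec:2} with Corollaries~\ref{cor:2.3} and~\ref{cor:6.2}. The only point that warrants a line of justification is the equality $q=\mathrm{ind}_p(k)$ rather than a proper divisor, and this is forced by the primality of $q$ together with non-abelianness.
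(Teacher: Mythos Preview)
Your proposal is correct and follows essentially the same route as the paper: present the group as $G(p,q,k)$, use primality of $p$ together with non-abelianness to get $(p,k-1)=1$ (hence trivial centre), and then invoke the $m$-prime completeness result (the paper cites Theorem~\ref{thm:6.1}, you cite its immediate consequence Corollary~\ref{cor:6.2}). Your explicit verification that $q=\mathrm{ind}_p(k)$ is a welcome addition, since the paper simply asserts the presentation $G(p,q,k)$ without spelling this out.
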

\begin{proof}
We assume, without loss of generality, that $p > q.$
It is easily seen that a non-abelian group of order $pq$ has presentation
$G(p,q,k).$
Since $p$ is prime, $(p,k - 1) = 1;$ thus $G$ has trivial centre.
Since $m = p$ is prime, Theorem \ref{thm:6.1} gives our result.
\end{proof}

This result applies to the $pq$-groups studied by Countryman in
\cite{countryman1970}.
Thus each of the commutation semigroups of a non-abelian $pq$-group
is simply a disjoint union of maximal containers.
Each maximal container is of order $p$ and the order of the two commutation
semigroups are determined by the sizes of the multiplicative closures of $R$
and $L$ in $\Z_p.$
For example, in the $pq$-group $G(7,2,6)$ we have
$R = \left\{ {0,5} \right\},$ ${R^*} = \left\{ {0,1,2,3,4,5,6} \right\},$
$L = \left\{ {0,2} \right\},$ and ${L^*} = \left\{ {0,1,2,4,} \right\}.$
Thus, by Corollary \ref{cor:4.12}, we have
${\rm P}(G) = \bigcup\limits_{x \in {R^*}} {C(x,1)} $ and
$\Lambda (G) = \bigcup\limits_{x \in {L^*}} {C(x,1)} $ with
$\left| {{\rm P}(G)} \right| = \left| {{R^*}} \right|7 = 49,$ and
$\left| {\Lambda (G)} \right| = \left| {{L^*}} \right|7 = 28.$

In \cite{countryman1970} (Theorem 2.1) Countryman proves:
If $G$ is a non-abelian $pq$-group $(p,$ $q$ primes), then
${\rm P}(G) = \Lambda (G)$ if and only if
$\left| {{\rm P}(G)} \right| = \left| {\Lambda (G)} \right|$.
He also notes that these two conditions are equivalent to
${\rm P}(G) \cong \Lambda (G).$
Having developed the theory to this point, we are now able to extend his
result.

\begin{theorem}
\label{thm:6.7}
If $G = G(p,n,k)$ with $p$ a prime, then the following are equivalent:
\begin{enumerate}[(i)]
\item ${\rm P}(G) = \Lambda (G),$
\item ${\rm P}(G) \cong \Lambda (G),$
\item $\left| {{\rm P}(G)} \right| = \left| {\Lambda (G)} \right|,$
\item $\left| {{R^*}} \right| = \left| {{L^*}} \right|.$
\end{enumerate}
\end{theorem}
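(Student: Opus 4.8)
The plan is to establish the cycle $(i)\Rightarrow(ii)\Rightarrow(iii)\Rightarrow(iv)\Rightarrow(i)$. The implication $(i)\Rightarrow(ii)$ is trivial, and $(ii)\Rightarrow(iii)$ holds because an isomorphism of finite semigroups is in particular a bijection. For $(iii)\Rightarrow(iv)$ I would use that $m=p$ is prime, so by Corollary \ref{cor:6.2} both ${\rm P}(G)$ and $\Lambda(G)$ are complete; Corollary \ref{cor:4.12} then gives $\left|{\rm P}(G)\right| = \left|R^*\right|p$ and $\left|\Lambda(G)\right| = \left|L^*\right|p$, so dividing through by $p$ converts $(iii)$ into $(iv)$.

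The substantive step is $(iv)\Rightarrow(i)$. First I would observe that, since $p$ is prime, every nonzero residue modulo $p$ is invertible, so $R^*\setminus\{0\}$ and $L^*\setminus\{0\}$ are subsets of the unit group $I(\Z_p)$ which are nonempty (the centre of $G$ is trivial, hence $I(R)$ and $I(L)$ are nonempty by Lemma \ref{lemma:2.4}) and closed under multiplication. A nonempty finite subset of a group that is closed under the group operation is a subgroup; thus $R^*\setminus\{0\}$ and $L^*\setminus\{0\}$ are subgroups of $I(\Z_p)$. Since $I(\Z_p)$ is cyclic of order $p-1$, it contains exactly one subgroup of each order dividing $p-1$. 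Therefore $\left|R^*\right| = \left|L^*\right|$ forces $\left|R^*\setminus\{0\}\right| = \left|L^*\setminus\{0\}\right|$, hence $R^*\setminus\{0\} = L^*\setminus\{0\}$, and so $R^* = L^*$ since both contain $0$. Finally, completeness together with Theorem \ref{thm:4.11} yields
\[
{\rm P}(G) = \dot{\bigcup}_{x\in R^*} C(x,1) = \dot{\bigcup}_{x\in L^*} C(x,1) = \Lambda(G),
\]
which is $(i)$, closing the cycle.

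The only point that needs attention is the bookkeeping that, once completeness is known, the entire container structure of ${\rm P}(G)$ and $\Lambda(G)$ --- and in particular whether the two semigroups coincide --- is governed solely by the sets $R^*$ and $L^*$; this is exactly the content of Corollary \ref{cor:4.12} via Theorem \ref{thm:4.11}, so no genuine obstacle arises. I expect the argument to be special to prime $m$: it hinges on $I(\Z_p)$ being cyclic with a unique subgroup of each order, which is what allows equality of cardinalities to be upgraded to equality of sets. For composite $m$ the non-invertible elements reappear, $R^*$ and $L^*$ need not be groups, and this last implication should not be expected to survive in the same form; the theorem thus sharpens Countryman's result precisely in the range where the number theory stays simple.
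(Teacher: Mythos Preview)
Your proposal is correct and follows essentially the same route as the paper: the cycle $(i)\Rightarrow(ii)\Rightarrow(iii)\Rightarrow(iv)\Rightarrow(i)$, with $(iii)\Rightarrow(iv)$ obtained from completeness plus Corollary~\ref{cor:4.12}, and $(iv)\Rightarrow(i)$ obtained by recognizing $R^*\setminus\{0\}$ and $L^*\setminus\{0\}$ as subgroups of the cyclic group $I(\Z_p)$ and invoking uniqueness of subgroups of a given order. Your justification that these are subgroups (nonempty finite multiplicatively closed subsets of a group) is in fact more direct than the paper's, which cites Theorem~\ref{thm:6.5} even though the invertibility of nonzero elements here is immediate from $m=p$ being prime.
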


\begin{proof}
First note that, by Theorem \ref{thm:6.5}, all non-zero elements of
${R^*}$ and ${L^*}$ are invertible.

$(i) \Rightarrow (ii)$ and $(ii) \Rightarrow (iii)$ are clear.

$(iii) \Rightarrow (iv):$
By Corollary \ref{cor:6.2}, we know that ${\rm P}(G)$ and $\Lambda (G)$ are
complete. Thus, Corollary \ref{cor:4.12}(i) and (ii) imply that
$\left| {{\rm P}(G)} \right| = \left| {{R^*}} \right|p$ and
$\left| {\Lambda (G)} \right| = \left| {{L^*}} \right|p.$
By hypothesis (iii), this yields
$\left| {{R^*}} \right| = \left| {{L^*}} \right|.$

$(iv) \Rightarrow (i):$
Note that ${R^*} - \left\{ 0 \right\}$ and ${L^*} - \left\{ 0 \right\}$ are
subgroups of $\Z_p-\{0\}$ of the same order.
Since $\Z_p-\{0\}$ is the multiplicative group of a finite field, it is
cyclic, and since both ${R^*} - \left\{ 0 \right\}$ and
${L^*} - \left\{ 0 \right\}$ are subgroups of a cyclic group, they are cyclic.
Since cyclic groups have only one subgroup of each possible order, we conclude
that ${R^*} - \left\{ 0 \right\} = {L^*} - \left\{ 0 \right\}.$
Thus ${R^*} = {L^*}.$
By Corollary \ref{cor:4.12}(i) and (ii) we have
$\mathrm{P}(G) = \displaystyle\dot{\bigcup}_{x\in R^*} C(x,1)$ and
$\Lambda(G) = \displaystyle\dot{\bigcup}_{x\in L^*} C(x,1);$
therefore,
${\rm P}(G) = \Lambda (G).$
\end{proof}

\end{document}